\documentclass[10pt,reqno,oneside,a4paper]{amsart}

\usepackage{subfiles}
\usepackage[english]{babel}
\usepackage[utf8]{inputenc}
\usepackage{amsmath}
\usepackage{amsthm}
\usepackage{amssymb}
\usepackage{amsfonts}
\usepackage{graphicx}
\usepackage[shortlabels]{enumitem}
\usepackage{geometry}
\usepackage{adjustbox}
\usepackage{listings}
\usepackage{color}
\usepackage{upgreek}
\usepackage{mathtools}
\usepackage{color}
\usepackage{empheq}
\usepackage{bm}
\usepackage{csquotes}
\usepackage{tikz}
\usetikzlibrary{matrix}
\usepackage{tikz-cd}
\usepackage{setspace}
\usepackage{commath}
\usepackage[norelsize]{algorithm2e}
\usepackage{adjustbox}
\usepackage{mathrsfs}
\usepackage{stmaryrd}
\usetikzlibrary{decorations.pathmorphing,shapes}
\usepackage{rotating}
\usepackage{pdflscape}
\usepackage{subcaption}
\usepackage{afterpage}
\usepackage[all,cmtip]{xy}
\usepackage{tabu}
\usepackage{stackengine}
\usepackage{caption}
\usetikzlibrary{arrows,calc}
\usepackage{hyperref}
\usepackage{scalerel}
\usepackage{centernot}
\usepackage{float}

\allowdisplaybreaks

\newcommand{\tikzAngleOfLine}{\tikz@AngleOfLine}
\def\tikz@AngleOfLine(#1)(#2)#3{%
\pgfmathanglebetweenpoints{%
\pgfpointanchor{#1}{center}}{%
\pgfpointanchor{#2}{center}}
\pgfmathsetmacro{#3}{\pgfmathresult}%
}

\newcommand{\proj}{\text{\normalfont proj-}}

\newcommand{\MCM}{\text{\normalfont MCM-}}
\newcommand{\Mod}{\text{\normalfont Mod-}}

\newcommand{\gap}{\hspace{1pt}}
\newcommand{\ZZ}{\mathbb{Z}}
\newcommand{\CC}{\mathbb{C}}

\renewcommand{\mod}{\text{\normalfont mod-}}
\newcommand{\stabmod}{\text{\normalfont \underline{mod}-}}
\newcommand{\stabMCM}{\text{\normalfont \underline{MCM}-}}

\DeclareMathOperator{\GKdim}{GKdim}

\DeclareMathOperator{\im}{im}
\DeclareMathOperator{\coker}{coker}
\DeclareMathOperator{\gldim}{gl.\hspace{-1pt}dim}
\DeclareMathOperator{\pdim}{p.\hspace{-1pt}dim}
\DeclareMathOperator{\idim}{i.\hspace{-1pt}dim}

\DeclareMathOperator{\End}{End}
\DeclareMathOperator{\stabEnd}{\underline{End}}
\DeclareMathOperator{\Spec}{Spec}

\DeclareMathOperator{\Hom}{Hom}

\DeclareMathOperator{\Dsg}{\mathcal{D}_{\normalfont{\text{sg}}}}
\DeclareMathOperator{\stabHom}{\underline{Hom}}
\DeclareMathOperator{\Ext}{Ext}

\DeclareMathOperator{\grr}{gr}

\DeclareMathOperator{\add}{add}
\DeclareMathOperator{\real}{Re}
\DeclareMathOperator{\imaginary}{Im}

\DeclareMathOperator{\sspan}{span}

\newcommand{\id}{\text{\normalfont id}}

\DeclarePairedDelimiter\ceil{\lceil}{\rceil}
\DeclarePairedDelimiter\floor{\lfloor}{\rfloor}

\numberwithin{equation}{section}

\theoremstyle{definition}
\newtheorem{defn}[equation]{Definition}

\theoremstyle{plain}
\newtheorem{thm}[equation]{Theorem}
\newtheorem{prop}[equation]{Proposition}
\newtheorem{lem}[equation]{Lemma}
\newtheorem{cor}[equation]{Corollary}

\newtheorem*{thm*}{Theorem}
\newtheorem{ass}[equation]{Assumption}

\theoremstyle{remark}
\newtheorem{rem}[equation]{Remark}

\newtheorem{example}[equation]{Example}

\newcommand\restr[2]{{
  \left.\kern-\nulldelimiterspace 
  #1 
  \right|_{#2} 
  }}

\newcounter{sarrow}

\newcounter{darrow}

\makeatletter
\renewcommand*\env@matrix[1][\arraystretch]{%
  \edef\arraystretch{#1}%
  \hskip -\arraycolsep
  \let\@ifnextchar\new@ifnextchar
  \array{*\c@MaxMatrixCols c}}
\makeatother

\newboolean{printBibInSubfiles}
\setboolean{printBibInSubfiles}{true}
\def\bib{\ifthenelse{\boolean{printBibInSubfiles}}
           { \bibliographystyle{amsalpha} \bibliography{thesisbib} }
       {}
}

\makeatletter
\tikzset{
  column sep/.code=\def\pgfmatrixcolumnsep{\pgf@matrix@xscale*(#1)},
  row sep/.code   =\def\pgfmatrixrowsep{\pgf@matrix@yscale*(#1)},
  matrix xscale/.code=%
    \pgfmathsetmacro\pgf@matrix@xscale{\pgf@matrix@xscale*(#1)},
  matrix yscale/.code=%
    \pgfmathsetmacro\pgf@matrix@yscale{\pgf@matrix@yscale*(#1)},
  matrix scale/.style={/tikz/matrix xscale={#1},/tikz/matrix yscale={#1}}}
\def\pgf@matrix@xscale{1}
\def\pgf@matrix@yscale{1}
\makeatother

\definecolor{mygray}{gray}{0.8}

\title{Singularity categories of deformations of Kleinian singularities}
\author{Simon Crawford}
\address{Department of Pure Mathematics, University of Waterloo, 200 University Ave W, Waterloo, ON N2L 3G1, Canada}
\email{simon.crawford@uwaterloo.ca}
\date{\today}

\subjclass[2010]{Primary: 14J17, 16G20; Secondary: 16G50, 18E30.}


\begin{document}

\begin{abstract}
Let $G$ be a finite subgroup of $\text{SL}(2,\Bbbk)$ and let $R = \Bbbk[x,y]^G$ be the coordinate ring of the corresponding Kleinian singularity. In 1998, Crawley-Boevey and Holland defined deformations $\mathcal{O}^\lambda$ of $R$ parametrised by weights $\lambda$. In this paper, we determine the singularity categories $\Dsg(\mathcal{O}^\lambda)$ of these deformations, and show that they correspond to subgraphs of the Dynkin graph associated to $R$. This generalises known results on the structure of $\Dsg(R)$. We also provide a generalisation of the intersection theory appearing in the geometric McKay correspondence to a noncommutative setting.
\end{abstract}

\maketitle

\section{Introduction}

\subsection{Background}
Throughout let $\Bbbk$ be an algebraically closed field of characteristic $0$. The Kleinian singularities $\Bbbk^2/G$, where $G$ is a finite subgroup of $\text{SL}(2,\Bbbk)$, are ubiquitous in algebraic geometry, representation theory, and singularity theory. In this paper, we shall study the latter of these for a family of (generically noncommutative) algebras. \\
\indent The notion of the singularity category of a ring $R$ was introduced by Buchweitz in \cite{buch86} as a particular Verdier quotient of $\mathcal{D}^{\text{b}}(\mod R)$. More specifically, writing $\text{Perf}(R)$ for the full subcategory of perfect complexes in $\mathcal{D}^{\text{b}}(\mod R)$, Buchweitz defined the singularity category as the Verdier quotient category
\begin{gather*}
\Dsg (R) \coloneqq \frac{\mathcal{D}^{\text{b}}(\mod R)}{\text{Perf}(R)}.
\end{gather*}
By construction, this category possesses the structure of a triangulated category. Buchweitz also showed that, when $R$ is Gorenstein, the singularity category is triangle equivalent to $\stabMCM R$, the stable category of maximal Cohen-Macaulay $R$-modules (that this latter category is triangulated also follows from a general result of Happel, see \cite{happel}). The singularity category of a commutative ring $R$ is also closely related to the category of reduced matrix factorisations of $R$ by \cite[Corollary 6.3]{eisenbud}, and under mild hypotheses these categories are in fact equivalent. \\
\indent From the above definition, it is not difficult to see that $\Dsg(R)$ is trivial precisely when $R$ has finite global dimension. However, in general it is difficult to give an adequate description of the singularity category of an arbitrary Gorenstein ring of infinite global dimension. Recent work includes \cite{chen11,chen15} which describes the singularity category when $R$ has radical square zero or when it is a quadratic monomial algebra, and \cite{kalck} which provides a description when $R$ is a so-called gentle algebra. Moreover, in  \cite{ari} the authors determine the singularity categories of some commutative Gorenstein isolated singularities. \\
\indent The standard examples of commutative surface singularities are the Kleinian singularities, which are very well understood. The main aim of this paper is to provide a concrete description of the singularity category of certain noncommutative deformations of the coordinate ring of a Kleinian singularity. \\
\indent Very little is known about the singularities of noncommutative rings, particularly those which are not finite over their centre. For example, given a singular noncommutative ring $S$, it is not known whether and under what circumstances one can find commutative rings $R_1, \dots, R_k$ such that we have an equivalence of triangulated categories $\Dsg(S) \simeq \bigoplus_{i=1}^k \Dsg(R_i)$. If this is the case, one can think of $S$ as having the same singularities as those of the varieties $\Spec R_i$. Our main result, Theorem \ref{introthm0}, shows that we have such a decomposition of singularity categories for the algebras of interest in this paper, and can be seen as a first step towards better understanding singularities of noncommutative surfaces. \\
\indent In \cite{cbh}, Crawley-Boevey and Holland defined a family of algebras $\mathcal{O}^\lambda(\widetilde{Q})$ depending on the data of an extended Dynkin quiver $\widetilde{Q}$ and a so-called weight for $\widetilde{Q}$. Write $Q$ for the Dynkin quiver obtained from $\widetilde{Q}$ by removing an extending vertex, and write $R_Q$ for the coordinate ring of the corresponding Kleinian singularity. Then the algebras $\mathcal{O}^\lambda (\widetilde{Q})$ may be thought of as deformations of $R_Q$ in the sense that there exists a filtration $\mathcal{F}$ of $\mathcal{O}^\lambda (\widetilde{Q})$ satisfying $\grr_{\mathcal{F}} \mathcal{O}^\lambda (\widetilde{Q}) \simeq R_Q$. These deformations are generically noncommutative, a property which depends on the weight $\lambda$, and it is easy to determine when this is the case. When $\widetilde{Q} = \widetilde{\mathbb{A}}_n$, if $\mathcal{O}^\lambda (\widetilde{Q})$ is noncommutative then it is an example of a generalised Weyl algebra, as studied in \cite{bavula,hodges}. If $\mathcal{O}^\lambda(\widetilde{Q})$ is commutative, a description of its singularity category follows from \cite[Theorem 3.2]{qfactorial}, where quite geometric techniques are employed. Through a completely ring-theoretic approach, we determine $\Dsg(\mathcal{O}^\lambda(\widetilde{Q}))$ irrespective of whether the deformation is commutative or noncommutative. \\
\indent Our main result can be stated as follows, where undefined terms will be defined in Section \ref{prelim}.

\begin{thm}[Theorem \ref{addequiv}, Theorem \ref{triequiv}] \label{introthm0}
Let $\widetilde{Q}$ be an extended Dynkin quiver with vertex set $\{0,1, \dots , n\}$, where $0$ is an extending vertex, and write $Q$ for the full subquiver obtained by deleting vertex $0$. Let $\lambda$ be a weight for $\widetilde{Q}$. Then there exists a subset $J = J(\lambda)$ of $\{1, \dots , n\}$ such that, if $Q^{(1)} \sqcup \dots \sqcup Q^{(r)}$ is the full subquiver of $Q$ obtained by deleting the vertices in $J$, so that the $Q^{(i)}$ are connected and therefore necessarily Dynkin, there is a triangle equivalence
\begin{align*}
\Dsg( \mathcal{O}^\lambda(\widetilde{Q})) \simeq \bigoplus_{i=1}^{r} \Dsg (R_{Q^{(i)}}).
\end{align*}
\end{thm}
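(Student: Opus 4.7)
The plan is to apply Buchweitz's equivalence $\Dsg(\Lambda) \simeq \stabMCM \Lambda$ for Iwanaga--Gorenstein $\Lambda$, thereby reducing the theorem to a computation of a stable category of MCM modules. A preliminary step is to verify that $\mathcal{O}^\lambda(\widetilde{Q})$ is Iwanaga--Gorenstein; this should follow from the known Gorenstein property of $R_Q$ together with the filtration $\mathcal{F}$ mentioned in the introduction, via standard graded-to-filtered lifting arguments.

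Next I would exploit the identification $\mathcal{O}^\lambda(\widetilde{Q}) = e_0 \Pi^\lambda(\widetilde{Q}) e_0$, where $\Pi^\lambda(\widetilde{Q})$ is the Crawley--Boevey--Holland deformed preprojective algebra and $e_0$ is the idempotent at the extending vertex. For each $i \in I$, set $M_i \coloneqq e_0 \Pi^\lambda(\widetilde{Q}) e_i$ as a right $\mathcal{O}^\lambda(\widetilde{Q})$-module; then $M_0 = \mathcal{O}^\lambda(\widetilde{Q})$ is free, and each $M_i$ for $i \neq 0$ is a natural candidate for an indecomposable MCM module. I would verify MCM-ness using the standard two-term projective resolution coming from the preprojective relations, and argue that the $M_i$ exhaust the indecomposable MCM modules up to projective summands by combining Morita theory for the idempotent $e_0$ with the classical representation theory of preprojective algebras.

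The main obstacle will be the third step: identifying $J$ and computing the stable structure that it controls. I expect $J$ to be exactly the set of $i \in I \setminus \{0\}$ for which $M_i$ is $\mathcal{O}^\lambda(\widetilde{Q})$-projective, and I would translate this projectivity into a combinatorial condition on $\lambda$ and the positive roots of $\widetilde{Q}$, most plausibly existence of a suitable root $\alpha$ with $\lambda \cdot \alpha \neq 0$ forcing some element to become invertible modulo projectives. I would then compute the stable endomorphism algebra $\stabEnd_{\mathcal{O}^\lambda(\widetilde{Q})}\bigl(\bigoplus M_i\bigr)$, where the sum ranges over $i \in I \setminus (J \cup \{0\})$, and exhibit a Morita equivalence between it and a product $\prod_{j=1}^r \Pi(Q^{(j)})$ of preprojective algebras of the remaining Dynkin subquivers. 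The delicate point is verifying that passing to the stable category corresponds exactly to the quiver operation of deleting the vertices $\{0\} \cup J$ from $\widetilde{Q}$.

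The theorem would then follow by invoking, on each Dynkin component, the classical triangle equivalence $\Dsg(R_{Q^{(j)}}) \simeq \stabmod \Pi(Q^{(j)})$ --- a consequence of the algebraic McKay correspondence, Buchweitz's theorem, and the self-injectivity of finite-type preprojective algebras --- and combining this with the obvious factorisation of the stable module category of a finite product as a direct sum.
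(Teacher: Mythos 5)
Your high-level skeleton — Buchweitz's equivalence, the idempotent presentation $\mathcal{O}^\lambda = e_0\Pi^\lambda e_0$, the vertex modules $e_\bullet \Pi^\lambda e_0$ as the MCM generators, the stable endomorphism ring $\Pi(Q_\lambda)$, and deleting vertices where the weight vanishes — matches the paper's Section 3. But there are several genuine problems.

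The most serious is that your proposal stops at a $\Bbbk$-linear equivalence and asserts "the theorem would then follow." It does not. Establishing $\stabMCM \mathcal{O}^\lambda \simeq \proj \Pi(Q_\lambda)$ as additive categories is only Theorem~\ref{addequiv}; to get a \emph{triangle} equivalence one must show the translation functor on $\stabMCM \mathcal{O}^\lambda$ matches the one on $\proj \Pi(Q_\lambda)$, i.e.\ that it acts on vertex modules as the Nakayama automorphism of $Q_\lambda$ (Theorem~\ref{translationfunctor}), and then invoke Amiot's uniqueness of triangulated structure on $\proj \Pi(Q)$ with a given shift. That computation of $\Sigma$ is the bulk of the paper (Sections 6--10) and is absent from your plan. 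Second, your closing citation $\Dsg(R_{Q^{(j)}}) \simeq \stabmod \Pi(Q^{(j)})$ is false: take $Q^{(j)} = \mathbb{A}_1$, so $\Pi(\mathbb{A}_1) = \Bbbk$ is semisimple and $\stabmod \Pi(\mathbb{A}_1) = 0$, while $\Dsg(R_{\mathbb{A}_1})$ has a nonzero indecomposable. The correct target is $\proj \Pi(Q^{(j)})$ with its canonical triangulated structure, which is precisely the $\lambda = \mathbf{0}$ instance of the theorem. Third, your plan to verify MCM-ness "using the standard two-term projective resolution" is self-defeating: an MCM module over a Gorenstein ring of positive dimension with finite projective dimension is projective (Lemma~\ref{MCMlemma}(4)); the paper instead proves $\Pi^\lambda e_0$ is reflexive and uses Lemma~\ref{MCMreflexive}. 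Finally, you omit the reduction to quasi-dominant weights (Lemma~\ref{quasidomassum}); without it the set $J$ is not simply $\{i : \lambda_i \neq 0\}$, and the "combinatorial condition on positive roots" you gesture at controls \emph{whether} $\mathcal{O}^\lambda$ is singular (Lemma~\ref{quasidomlem}), not \emph{which} vertex modules are projective. Also a small slip: $e_0\Pi^\lambda e_i$ is a left $\mathcal{O}^\lambda$-module; the right modules are $V_i = e_i \Pi^\lambda e_0$.
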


One can show that we may restrict our attention to the case where the weight $\lambda$ is \emph{quasi-dominant} (see Definition \ref{quasidomdef}). For example, when $\Bbbk = \CC$, a weight is quasi-dominant if, for $1 \leqslant i \leqslant n$,
\begin{align*}
\lambda_i \in \{ z \in \mathbb{C} \mid \real z > 0, \text{ or } \real z = 0 \text{ and } \imaginary z \geqslant 0 \},
\end{align*}
and where $\lambda_0$ can be arbitrary. In this case, the subset $J$ in the above theorem is $J = \{ i \in \{1, \dots , n\} \mid \lambda_i = 0\}$. \\
\indent The result in Theorem \ref{introthm0} coincides with the intuition coming from commutative singularity theory which says that deforming a singularity should make it no worse; in our context, deforming a singularity corresponds to making weights at certain vertices of $\widetilde{Q}$ nonzero, and the above theorem says that this makes the singularity category simpler, in a precise sense. In the appendix, we also provide a simple proof which illustrates how the translation functor acts on the triangulated category $\Dsg(\mathcal{O}^\lambda(\widetilde{Q}))$. \\ 
\indent Since the first version of this paper appeared online, an alternative proof of Theorem \ref{introthm0} has been given in \cite[Theorem 9.4]{kalck2018} using relative singularity categories. \\
\indent Now suppose that the weight $\lambda \in \Bbbk^{n+1}$ is given by $\lambda_0 = 1$ and $\lambda_i = 0$ for $1 \leqslant i \leqslant n$, and in this case write $\lambda = \varepsilon_0$. We then consider $\mathcal{O}^\lambda(\widetilde{Q})$ to be a noncommutative analogue of $R_Q$. This viewpoint is partially justified by the following immediate corollary:

\begin{cor}
Retain the notation of Theorem \ref{introthm0}, and suppose that $\lambda = \varepsilon_0$, as above. Then there is a triangle equivalence
\begin{align*}
\Dsg( \mathcal{O}^\lambda(\widetilde{Q})) \simeq \Dsg(R_{Q}).
\end{align*}
\end{cor}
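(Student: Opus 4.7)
The plan is to apply Theorem \ref{introthm1} directly with the specific weight $\lambda = \varepsilon_0$, and observe that the hypotheses are satisfied in a trivial way.

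First, I would verify the weight condition of Theorem \ref{introthm1}. Since $\lambda = \varepsilon_0$ means $\lambda_0 = 1$ and $\lambda_i = 0$ for $1 \leqslant i \leqslant n$, and the value $0$ satisfies $\real(0) = 0$ and $\imaginary(0) = 0 \geqslant 0$, each $\lambda_i$ (for $i \neq 0$) lies in the prescribed half-plane $\{z \in \CC \mid \real z > 0, \text{ or } \real z = 0 \text{ and } \imaginary z \geqslant 0\}$.

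Next, I would identify the subquiver $Q_\lambda$. By definition, $I_\lambda = \{i \in \{1, \dots, n\} \mid \lambda_i = 0\} = \{1, \dots, n\}$, so $Q_\lambda$ is the full subquiver of $Q$ on all its vertices, that is $Q_\lambda = Q$ itself. Since $\widetilde{Q}$ is an extended Dynkin quiver, deleting the single extending vertex $0$ leaves a connected Dynkin quiver; hence $Q$ is connected, so in the decomposition $Q_\lambda = Q^{(1)} \sqcup \dots \sqcup Q^{(r)}$ we have $r = 1$ and $Q^{(1)} = Q$.

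Finally, Theorem \ref{introthm1} yields
\begin{align*}
\Dsg(\mathcal{O}^\lambda(\widetilde{Q})) \simeq \bigoplus_{i=1}^{1} \Dsg(R_{Q^{(i)}}) = \Dsg(R_Q),
\end{align*}
which is the claim. There is no real obstacle here; the corollary is essentially an unpacking of the notation in the special case where the weight vanishes on every non-extending vertex.
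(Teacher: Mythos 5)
Your proof is correct and matches the paper's approach: the paper labels this an ``immediate corollary,'' and the substance is exactly what you wrote, namely that with $\lambda = \varepsilon_0$ the weight condition holds trivially, $Q_\lambda = Q$ is connected Dynkin, and Theorem \ref{introthm1} applies with $r = 1$.
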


\indent Another family of results concerning Kleinian singularities is what is often called the geometric McKay correspondence, which concerns the intersection theory of the minimal resolution of $\Spec R_Q$. In the last section, we prove a result which may be seen as a generalisation of this to a noncommutative setting. We give an imprecise statement of this result below, and a more precise statement in Section \ref{inttheorysec}.

\begin{thm}[Theorem \ref{inttheory}]
Let $\widetilde{Q}$ be an extended Dynkin quiver with corresponding Dynkin quiver $Q$ and let $\lambda = \varepsilon_0$. Then $\mathcal{O}^\lambda(\widetilde{Q})$ has a noncommutative resolution, and the intersection theory of the exceptional objects in this resolution is the same as that of the exceptional curves in the minimal resolution of a Kleinian singularity of type corresponding to $Q$.
\end{thm}

In particular, this result further supports the viewpoint that $\mathcal{O}^\lambda(\widetilde{Q})$ can be viewed as a noncommutative analogue of $R_Q$ when $\lambda = \varepsilon_0$. \\
\indent We now take a moment to provide an overview of the proof of Theorem \ref{introthm0}. For all of our calculations, we work in $\stabMCM \mathcal{O}^\lambda(\widetilde{Q})$ rather than $\Dsg(\mathcal{O}^\lambda(\widetilde{Q}))$; as mentioned previously, these two categories are triangle equivalent, see Theorem \ref{buchthm}. The first important observation to make is that we can restrict our attention to weights $\lambda$ which are \emph{quasi-dominant}. In Section \ref{singcatsec}, this restriction allows us to give a concrete description of $\Dsg(\mathcal{O}^\lambda(\widetilde{Q}))$ as a $\Bbbk$-linear category in terms of an auxiliary Krull-Schmidt category. We also find that the isoclasses $V_i$ of indecomposable objects in $\Dsg(\mathcal{O}^\lambda(\widetilde{Q}))$ are indexed by those vertices $i \geqslant 1$ with $\lambda_i = 0$. This auxiliary category allows us to establish the equivalence of Theorem \ref{introthm0}, but only as a $\Bbbk$-linear equivalence. \\
\indent In a previous version of this paper, a lengthy case-by-case analysis was then used to show that we had the desired triangle equivalence. A result of Keller which subsequently appeared significantly reduces the amount of work that needs to be done. In fact, it now suffices to show that the direct sum appearing on the right hand side of the (a priori $\Bbbk$-linear) equivalence of Theorem \ref{introthm0} is a decomposition into algebraic triangulated subcategories. This is shown in Section \ref{SingChapterTriSec}. 
\subsection{Organisation of the paper}
This paper is organised as follows. In Section \ref{prelim}, we recall some basic definitions and facts, and introduce the notation used throughout the paper. In Section \ref{singcatsec}, the singularity categories of ${O}^\lambda(\widetilde{Q})$ are determined as $\Bbbk$-linear categories. We then complete the proof of Theorem \ref{introthm0} in Section \ref{SingChapterTriSec}. In Section \ref{inttheorysec}, we provide a noncommutative version of the geometric McKay correspondence, and in the appendix we detail how the translation functor behaves on objects of $\Dsg(R_Q)$.

\subsection{Acknowledgements}
The work contained in this paper was completed while the author was an EPSRC-funded student at the University of Edinburgh, and the material contained in this paper appears in an adapted form in his PhD thesis, \cite{simon}. The author would like to thank his supervisor Susan J. Sierra for suggesting this problem and providing guidance, Michael Wemyss for a number of useful discussions, and the anonymous referee whose suggestions ultimately led to a much shorter argument than that given in the first version of this paper. The author also thanks the EPSRC.


\section{Preliminaries} \label{prelim}
We now recall some of the definitions and results that we will make use of throughout this paper. In this section, $R$ will denote an arbitrary ring.

\subsection{Conventions}
As stated in the introduction, throughout $\Bbbk$ will denote an algebraically closed field of characteristic 0. We write $\mod R$ (respectively, $R\text{-mod}$) for the category of finitely generated right (respectively, left) $R$-modules; in this paper, we shall use right modules unless otherwise stated. We also write $\proj R$ for the full subcategory of $\mod R$ consisting of finitely generated projective modules. We write $M^* \coloneqq \Hom_R(M,R)$ for the dual of an $R$-module $M$, which is an $(R, \End_R(M))$-bimodule. We write $\pdim M$ and $\idim M$ for the projective and injective dimensions of $M \in \mod R$, respectively, and $\gldim R$ for the global dimension of $R$.

\subsection{Definitions and basic results}
\begin{defn} 
A \emph{quiver} $Q$ is a directed multigraph, and we write $Q_0$ for the set of vertices and $Q_1$ for the set of arrows. We equip $Q$ with head and tail maps $h,t : Q_1 \to Q_0$ which take an arrow to the vertices that are its head and tail respectively. A \emph{non-trivial path} in the quiver is a sequence of arrows $p = \alpha_1 \alpha_2 \dots \alpha_\ell$ with $h(\alpha_i) = t(\alpha_{i+1})$ for $1 \leqslant i \leqslant \ell - 1$ (that is, we compose arrows from left to right), and such a path is said to have \emph{length} $\ell$. Moreover, for each vertex $i \in Q_0$ there is a \emph{trivial path} $e_i$ of length $0$, with head and tail vertex both equal to $i$.
\end{defn}

\begin{defn}
Given a field $\Bbbk$ and a quiver $Q$, we define the path algebra $\Bbbk Q$ of $Q$ as follows: as a $\Bbbk$-vector space, $\Bbbk Q$ has a basis given by paths in the quiver, and we define multiplication by concatenation of paths:
\begin{gather*}
p \cdot q = \left\{ \begin{array}{cl}
pq & \text{if } h(p) = t(q), \\
0 & \text{otherwise.}
\end{array} \right.
\end{gather*}
\end{defn}

\indent If $R$ is a commutative ring, then $\Spec R$ is nonsingular if and only if $R$ has finite global dimension. It is therefore sensible to say that a (possibly noncommutative) ring is \emph{nonsingular} if it has finite global dimension, and \emph{singular} otherwise. Before we are able to define the singularity category of a ring, we must make a few more definitions.

\begin{defn}
Given $R$-modules $M$ and $N$, write $\stabHom_R(M,N) = \Hom_R(M,N)/\hspace{-3pt}\sim$, where $f \sim f'$ if and only if $f-f'$ factors through a finitely generated projective module. The \emph{stable module category} of $R$, denoted $\stabmod R$, is then the category whose objects are the same as those of $\mod R$, and for modules $M,N$, has morphisms $\stabHom_R(M,N)$. Given a full subcategory $\text{abc-}R$ of $\mod R$, we write $\text{\underline{abc}-}R$ for the full subcategory of $\stabmod R$ whose objects are the same as those of $\text{abc-}R$.
\end{defn}

\indent Noting that an element of $\sum_{i=1}^k n_i \otimes f_i$ of $N \otimes_R M^*$ gives rise to a homomorphism $M \to N$ via $m \mapsto \sum_{i=1}^k n_i  f_i(m)$, it is not hard to show that a module homomorphism $f: M \to N$ factors through a projective module if and only if $f$ is the image of some element of $N \otimes_R M^*$. Abusing notation, this allows us to identify $\stabHom_R(M,N)$ with $\Hom_R(M,N)/(N \otimes_R M^*)$, which will be useful in later calculations. In this paper, we are often in the situation where $R = e \Lambda e$, $M = e_1 \Lambda e$, and $N = e_2 \Lambda e$, where $\Lambda$ is some ring and $e, e_1$, and $e_2$ are pairwise orthogonal idempotents, and we are able to make identifications
\begin{align*}
M^* \cong e \Lambda e_1, \quad \Hom_R(M,N) \cong e_2 \Lambda e_1, \quad \stabHom_R(M,N) \cong \frac{e_2 \Lambda e_1}{e_2 \Lambda e \Lambda e_1}.
\end{align*}
\indent In the stable module category, we have a weaker notion of an isomorphism than in the usual module category. Indeed, \cite[Proposition 1.44]{ausbridge} shows that two $R$-modules $M,N$ are isomorphic in $\stabmod R$ if and only if there exist projective modules $P$ and $Q$ such that $M \oplus P \cong N \oplus Q$ in $\mod R$. \\
\indent The \emph{first syzygy} $\Omega M$ of $M \in \mod R$ is defined to be the kernel of any surjection $R^n \twoheadrightarrow M$. The observation in the previous paragraph combined with \cite[Proposition 8.5]{rot} implies that $\Omega M$ is uniquely determined in $\stabmod R$.

\begin{defn}
A ring $R$ is said to be \emph{Gorenstein} if it is noetherian (i.e., left and right noetherian) and both $\idim R_R$ and $\idim {}_R R$ are finite. By \cite[Lemma A]{zaks}, under these hypotheses the values $\idim R_R$ and $\idim {}_R R$ coincide, and we call this common value the \emph{(injective) dimension} of $R$.
\end{defn}
 
\begin{defn}
Suppose that $R$ is Gorenstein. A finitely generated $R$-module $M$ is said to be \emph{maximal Cohen-Macaulay} (MCM) if it satisfies $\Ext_R^i(M,R) = 0$ for all $i \geqslant 1$. We write $\MCM R$ for the full subcategory of $\mod R$ consisting of maximal Cohen-Macaulay $R$-modules.
\end{defn}

For commutative local rings, the above definition coincides with the usual (commutative) definition of maximal Cohen-Macaulay modules in terms of depth \cite[Section 4.2]{buch86}. Maximal Cohen-Macaulay modules have the following elementary properties, proofs of which can be found in \cite{buch86}:

\begin{lem}\leavevmode \label{MCMlemma}
\begin{enumerate}[{\normalfont (1)},leftmargin=*,topsep=0pt,itemsep=0pt]
\item Any finitely generated projective module is MCM.
\item MCM modules are reflexive.
\item Finite direct sums and direct summands of MCM modules are MCM.
\item An MCM module is either projective or has infinite projective dimension.
\end{enumerate}
\end{lem}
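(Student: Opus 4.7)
The plan is to treat the four properties in turn, with (1) and (3) essentially immediate from the definition of $\Ext$, while (2) and (4) require slightly more work via Auslander--Bridger theory. For (1), any finitely generated projective $P$ is its own projective resolution, so $\Ext_R^i(P, R) = 0$ for all $i \geq 1$. For (3), additivity gives $\Ext_R^i(M_1 \oplus M_2, R) \cong \Ext_R^i(M_1, R) \oplus \Ext_R^i(M_2, R)$, which vanishes for all $i \geq 1$ if and only if each summand's $\Ext$ vanishes; this settles both the direct sum and direct summand cases simultaneously.

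For (2), I would invoke the Auslander--Bridger fundamental exact sequence
\begin{gather*}
0 \to \Ext_R^1(\operatorname{Tr} M, R) \to M \xrightarrow{\eta_M} M^{**} \to \Ext_R^2(\operatorname{Tr} M, R) \to 0,
\end{gather*}
where $\operatorname{Tr} M = \coker(P_0^* \to P_1^*)$ is the Auslander--Bridger transpose, built from a projective presentation $P_1 \to P_0 \to M \to 0$. Reflexivity then reduces to the vanishing $\Ext_R^i(\operatorname{Tr} M, R) = 0$ for $i = 1, 2$. Dualizing the presentation produces a four-term exact sequence $0 \to M^* \to P_0^* \to P_1^* \to \operatorname{Tr} M \to 0$ of left $R$-modules; splitting this into two short exact sequences and applying $\Hom_R(-, R)$ to each relates $\Ext_R^i(\operatorname{Tr} M, R)$ to $\Ext_R^j(M, R)$ for $j \geq 1$ via dimension shifts, using that each $P_i^*$ is a projective left $R$-module. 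These all vanish by hypothesis, giving reflexivity.

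For (4), I would argue by contradiction: suppose $M$ is MCM with finite projective dimension $d \geq 1$. Syzygies of MCM modules are MCM (a direct dimension-shift in $0 \to \Omega N \to P \to N \to 0$ with $N$ MCM and $P$ projective), so $\Omega^{d-1} M$ is MCM of projective dimension exactly $1$. Choosing a minimal projective resolution $0 \to P_d \to P_{d-1} \to \Omega^{d-1} M \to 0$ and applying $\Hom_R(-, R)$, the hypothesis $\Ext_R^1(\Omega^{d-1} M, R) = 0$ yields a short exact sequence $0 \to (\Omega^{d-1} M)^* \to P_{d-1}^* \to P_d^* \to 0$ of left $R$-modules, which splits since $P_d^*$ is projective. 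Hence $(\Omega^{d-1} M)^*$ is projective; dualizing once more and using reflexivity from (2) shows $\Omega^{d-1} M$ is itself projective, contradicting $\pdim \Omega^{d-1} M = 1$.

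The main technical obstacle is the bookkeeping in part (2), where one must carefully track the swap between right and left $R$-modules under dualization and establish the precise form of the Auslander--Bridger fundamental sequence; once (2) is in place, the remaining arguments — especially the contradiction in (4), which uses reflexivity in an essential way — are then routine.
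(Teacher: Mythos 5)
The paper does not prove this lemma itself---it cites \cite{buch86} and \cite{kalck}, both of which work over Gorenstein rings---so there is no internal proof to compare against. Your parts (1) and (3) are correct. Part (4) is essentially right given (2), though it quietly relies on the existence of a minimal projective resolution; you can sidestep this by applying $\Hom_R(-,R)$ to \emph{any} finite projective resolution of $M$, noting the resulting sequence $0 \to M^* \to P_0^* \to \cdots \to P_d^* \to 0$ is exact because $M$ is MCM, splitting it from the right to conclude $M^*$ is projective, and then using reflexivity to get $M \cong M^{**}$ projective.

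The genuine gap is in part (2). Splitting $0 \to M^* \to P_0^* \to P_1^* \to \operatorname{Tr} M \to 0$ at $C \coloneqq \coker(M^* \to P_0^*)$ and applying $\Hom_R(-,R)$ gives $\Ext_R^{i+1}(\operatorname{Tr} M,R) \cong \Ext_R^i(C,R)$ and $\Ext_R^{i+1}(C,R) \cong \Ext_R^i(M^*,R)$ for $i \geqslant 1$; the dimension shifts therefore land on $\Ext_R^j(M^*,R)$, \emph{not} on $\Ext_R^j(M,R)$ as you assert, and vanishing of the former is not among your hypotheses---establishing it is precisely the content of the result. In degrees one and two the same computation merely reproduces the identifications $\Ext_R^1(\operatorname{Tr} M,R) \cong \ker \eta_M$ and $\Ext_R^2(\operatorname{Tr} M,R) \cong \coker \eta_M$, so you have gone in a circle rather than proved anything. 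The implication ``$\Ext_R^{\geqslant 1}(M,R) = 0 \Rightarrow M$ reflexive'' is genuinely a theorem about Gorenstein rings and must use $\idim R < \infty$ somewhere; your argument never does, which is the tell. One correct route: the dualized projective resolution $0 \to M^* \to P_0^* \to P_1^* \to \cdots$ is acyclic (this uses only $\Ext_R^{\geqslant 1}(M,R)=0$); splice it with a projective resolution of $M^*$ to form a complete resolution, and use $\idim R < \infty$ to show the spliced complex stays acyclic after applying $\Hom_R(-,R)$, which yields both $\Ext_R^{\geqslant 1}(M^*,R) = 0$ and the isomorphism $M \xrightarrow{\sim} M^{**}$.
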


With these definitions in hand, we now recall a theorem which identifies a category that is triangle equivalent to the singularity category in the case of a Gorenstein ring $R$:

\begin{thm}[{\cite[Theorem 4.4.1]{buch86}}] \label{buchthm}
Suppose that $R$ is Gorenstein. Then the full subcategory $\stabMCM R$ of $\stabmod R$ whose objects are MCM $R$-modules is a triangulated category, with translation functor $\Sigma$ given by $\Sigma M = \Omega^{-1} M$. Moreover, there is a triangle equivalence $\Dsg (R) \simeq \stabMCM R$.
\end{thm}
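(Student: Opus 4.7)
The plan is to follow Buchweitz's original strategy, which reduces everything to two general facts: (i) the stable category of a Frobenius exact category is triangulated (Happel's theorem), and (ii) a fully faithful exact functor between triangulated categories is an equivalence as soon as it is essentially surjective.

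For the first statement, I would equip $\MCM R$ with the exact structure inherited from $\mod R$: a short sequence $0 \to L \to M \to N \to 0$ of MCM modules is declared admissible exactly when it is exact in $\mod R$. The category $\MCM R$ is closed under extensions and under kernels of admissible epimorphisms thanks to the long exact sequence of $\Ext_R^\bullet(-, R)$, using the defining vanishing $\Ext_R^i(M, R) = 0$ for $i \geqslant 1$. The key structural claim is then that $\MCM R$ is a Frobenius exact category whose projective-injective objects coincide with $\proj R$: projectives are MCM by Lemma \ref{MCMlemma}(1) and are obviously projective-injective in the exact category, while the fact that every MCM module $M$ admits an admissible embedding $0 \to M \to P \to \Omega^{-1} M \to 0$ with $P$ projective and $\Omega^{-1} M \in \MCM R$ is where we use reflexivity (Lemma \ref{MCMlemma}(2)): take a surjection $R^n \twoheadrightarrow M^*$ and dualize. (In the Gorenstein case, finite injective dimension of $R$ ensures that $M^*$ is again MCM, so the cokernel is MCM too.) Once this Frobenius structure is in place, Happel's theorem automatically gives $\stabMCM R$ the structure of a triangulated category with shift functor the cosyzygy $\Omega^{-1}$, and distinguished triangles induced from admissible short exact sequences in $\MCM R$.

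For the second statement, I would define the comparison functor by composing the inclusion $\MCM R \hookrightarrow \mod R$ with the localisation $\mod R \to \mathcal{D}^{\text{b}}(\mod R) \to \Dsg(R)$; this functor sends projective modules (viewed as stalk complexes) into $\text{Perf}(R)$, hence to zero in $\Dsg(R)$, so it descends to an exact functor
\[
F \colon \stabMCM R \to \Dsg(R).
\]
Fully faithfulness would follow from the fact that for $M, N \in \MCM R$ every morphism $M \to N$ in $\Dsg(R)$ can be represented by an honest module map modulo factorings through $\proj R$; concretely, for $M, N \in \MCM R$ one has $\Hom_{\Dsg(R)}(M, N) = \stabHom_R(M, N)$, which is proved by analysing roofs $M \leftarrow X \to N$ and using that perfect complexes have bounded projective dimension while MCM modules are $\Ext$-orthogonal to $R$.

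The essentially surjective step is where the Gorenstein hypothesis plays its decisive role, and I expect it to be the main obstacle. Writing $d = \idim R_R = \idim {}_R R < \infty$, I would argue that for any $M \in \mod R$, the $d$-th syzygy $\Omega^d M$ lies in $\MCM R$: by dimension shifting, $\Ext_R^i(\Omega^d M, R) = \Ext_R^{i+d}(M, R) = 0$ for $i \geqslant 1$. A bounded complex can be replaced in $\Dsg(R)$ by its \enquote{soft truncation} plus a sufficiently iterated syzygy, showing that every object of $\Dsg(R)$ is isomorphic to $\Omega^d M [d]$ for some $M$, and hence to an MCM module placed in degree zero. Once $F$ is shown to be a fully faithful essential surjection, it suffices to check that $F$ sends the Frobenius suspension $\Omega^{-1}$ to the shift $[1]$ in $\Dsg(R)$: a short exact sequence $0 \to M \to P \to \Omega^{-1} M \to 0$ with $P$ projective yields a distinguished triangle in $\mathcal{D}^{\text{b}}(\mod R)$, and since $P$ becomes zero in $\Dsg(R)$, this triangle identifies $\Omega^{-1} M$ with $M[1]$ there. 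This matching of suspensions, combined with the additivity of $F$ on admissible sequences, upgrades the $\Bbbk$-linear equivalence to a triangle equivalence.
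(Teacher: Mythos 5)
The paper does not prove this result: it is quoted verbatim as Theorem 4.4.1 of Buchweitz's manuscript and used as a black box, so there is no in-paper argument to compare you against. Your proposal is a correct reconstruction of the standard Buchweitz--Happel argument, and it hits all the right points: the Frobenius exact structure on $\MCM R$ with $\proj R$ as the projective-injective objects, Happel's theorem to produce the triangulated structure on the stable category, the comparison functor to $\Dsg(R)$, full faithfulness via the computation $\Hom_{\Dsg(R)}(M,N) \cong \stabHom_R(M,N)$ for MCM $M,N$, essential surjectivity via high enough syzygies being MCM when $\idim R < \infty$, and the identification of $\Omega^{-1}$ with $[1]$.

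One small clarification you might want to make explicit: your parenthetical claims that the cokernel $C$ in $0 \to M \to R^n \to C \to 0$ (obtained by dualizing a surjection $R^n \twoheadrightarrow M^*$) is MCM ``because $M^*$ is again MCM.'' As written that is a non sequitur; what the Gorenstein hypothesis actually buys you is cleaner to extract from the long exact sequence of $\Ext_R(-,R)$ applied directly to $0 \to M \to R^n \to C \to 0$: for $i \geqslant 2$, $\Ext_R^i(C,R)$ is squeezed between $\Ext_R^{i-1}(M,R)=0$ and $\Ext_R^i(R^n,R)=0$, and $\Ext_R^1(C,R) = \coker\bigl((R^n)^* \to M^*\bigr)$ vanishes because that map is (naturally identified with) the original surjection $R^n \twoheadrightarrow M^*$ once reflexivity of $M$ and of free modules is used. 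Either route is fine, but the direct $\Ext$-chase avoids having to invoke that duality preserves MCM modules.
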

 
While the term ``singularity category'' is more suggestive (which was our main reason for using this terminology in the introduction), since every example that we consider in this paper satisfies the hypotheses of this theorem, we instead focus our attention on determining $\stabMCM R$. \\
\indent Theorem \ref{buchthm} is a specific example of a more general result due to Happel, which we now briefly recall. An \emph{exact category} $\mathcal{C}$ is an additive category possessing a class of \emph{conflations} (sometimes called exact sequences) which are triples of objects connected by arrows $X \to Y \to Z$, and which satisfy a number of axioms; see \cite[Section 2]{frob} for more details. An object $P \in \mathcal{C}$ is \emph{projective} if the functor $\Hom_{\mathcal{C}}(P,-)$ sends conflations to exact sequences, and we say that $\mathcal{C}$ has \emph{enough projectives} if every object $Z \in \mathcal{C}$ fits into a conflation $X \to P \to Z$ with $P$ projective. Dually, one has a notion of an \emph{injective object} and of having \emph{enough injectives}. An exact category $\mathcal{C}$ is said to be \emph{Frobenius} provided that it has enough projectives and enough injectives, and the class of projective objects coincides with the class of injective objects. Given a Frobenius category $\mathcal{C}$, we may form its stable category $\underline{\mathcal{C}}$ in the same way we formed the stable category $\stabMCM R$. Then \cite{happel} shows that this category is triangulated, and if $X \to Y \to Z$ is a conflation in $\mathcal{C}$ then there exists a triangle of the form $X \to Y \to Z \to \Sigma X$ in $\underline{\mathcal{C}}$. If $\mathcal{T}$ is a triangulated category which is triangle equivalent to the stable category of a Frobenius category, then we say that $\mathcal{T}$ is \emph{algebraic}. \\
\indent If $R$ is a Gorenstein ring, then $\MCM R$ is Frobenius and so Happel's result implies that $\stabMCM R$ is triangulated; this triangulated structure is precisely the one given in Theorem \ref{buchthm}. In $\MCM R$, every conflation $X \to Y \to Z$ arises from a short exact sequence $0 \to X \to Y \to Z \to 0$ of MCM $R$-modules. \\
\indent Finally, we recall two useful results that will be helpful when identifying the maximal Cohen-Macaulay modules of a ring. Given an additive category $\mathcal{C}$ and an object $C \in \mathcal{C}$, we write $\add(C)$ for the full subcategory of $\mathcal{C}$ consisting of direct summands of finite direct sums of $C$. This is the smallest additive subcategory of $\mathcal{C}$ which contains $C$ and is closed under taking direct summands. The following result is due to Auslander, but we provide a proof.

\begin{prop}[Auslander] \label{auslander}
Suppose that $R$ is Gorenstein and that $M \in \MCM R$ is a generator (for example, this occurs if $M$ has $R$ as a direct summand or if $R$ is simple). If $\gldim \End_R(M) \leqslant 2$, then $\add M = \MCM R$.
\end{prop}
\begin{proof} 
Write $\Lambda = \End_R(M)$. Since $\mod R$ has split idempotents (a fact which holds for any ring $R$), \cite[Proposition 2.3]{krause} implies that the functor $\Hom_R(M,-): \mod R \to \mod \Lambda$ restricts to an equivalence
\begin{align}
\add M \xrightarrow{\simeq} \proj \Lambda. \label{addequivproj}
\end{align}
We also note that since $M$ is a generator, $R^n \in \add M$ for any $n \geqslant 1$. \\
\indent That $\add M \subseteq \MCM R$ is clear, so suppose that $N \in \MCM R$. Since $R$ is noetherian, $N^*$ is finitely presented, so we have an exact sequence of left $R$-modules of the form
\begin{align*}
R^{m} \to R^{n} \to N^* \to 0.
\end{align*}
Applying $\Hom_R(-,R)$ and noting that $N$ is MCM and therefore reflexive, we obtain an exact sequence
\begin{align*}
0 \to N \to R^{n} \to R^{m}.
\end{align*}
Applying $\Hom_R(M,-)$ then gives an exact sequence
\begin{align*}
0 \to \Hom_R(M,N) \to \Hom_R(M,R^{n}) \xrightarrow{\theta} \Hom_R(M,R^{ m}) \to \coker \theta \to 0,
\end{align*}
where, since $M$ is a generator, $\Hom_R(M,R^{n})$ and $\Hom_R(M,R^{ m})$ are both projective $\Lambda$-modules by (\ref{addequivproj}). Since $\gldim \Lambda \leqslant 2$ we have $\pdim \coker \theta \leqslant 2$, and therefore $\Hom_R(M,N)$ is also a projective $\Lambda$-module. By (\ref{addequivproj}), it follows that $N \in \add M$.
\end{proof}

\begin{rem}
If $R$ has injective dimension at most $2$ then the converse of Proposition \ref{auslander} also holds; see \cite[Proposition 2.2.11]{simon}.
\end{rem}

When $R$ has injective dimension at most 2, we also have the following:

\begin{lem} \label{MCMreflexive}
Let $R$ be a Gorenstein ring of injective dimension at most 2. Then $M \in \mod R$ is reflexive if and only if it is maximal Cohen-Macaulay.
\end{lem}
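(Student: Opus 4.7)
The forward direction ($\Leftarrow$) is already recorded as Lemma \ref{MCMlemma}(2), so the substance lies in showing that every reflexive $M \in \mod R$ is MCM. The plan is to exhibit $M$ as a second syzygy and then use the hypothesis $\idim R \leq 2$ to shift away all potentially nonzero $\Ext^i_R(M,R)$.

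To realize $M$ as a second syzygy, I would take a finite projective presentation $P_1 \xrightarrow{f} P_0 \to M^* \to 0$ of the dual and apply $\Hom_R(-,R)$. Since the dual of a finitely generated projective is again finitely generated projective, this produces a left-exact sequence
\begin{equation*}
0 \to M^{**} \to P_0^* \xrightarrow{f^*} P_1^*
\end{equation*}
with $P_0^*$ and $P_1^*$ projective. Reflexivity of $M$ converts the first term into $M$, and then splitting this four-term sequence through the image $K = \im(f^*)$ and cokernel $C = \coker(f^*)$ yields two short exact sequences
\begin{gather*}
0 \to M \to P_0^* \to K \to 0, \\
0 \to K \to P_1^* \to C \to 0,
\end{gather*}
exhibiting $M$ as a second syzygy of $C$.

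From here it is just a double dimension shift. Applying $\Hom_R(-,R)$ to each short exact sequence and using that the flanking terms $\Ext^i_R(P_j^*, R)$ vanish for $i \geq 1$ gives, for every $i \geq 1$,
\begin{equation*}
\Ext^i_R(M,R) \cong \Ext^{i+1}_R(K,R) \cong \Ext^{i+2}_R(C,R).
\end{equation*}
Because $\idim R \leq 2$, the group on the right vanishes for every $i \geq 1$, and we conclude $\Ext^i_R(M,R) = 0$ for all $i \geq 1$, so $M$ is MCM.

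The only non-obvious move is the opening step of dualizing a presentation of $M^*$ rather than of $M$: this is precisely what makes reflexivity usable, since without it one could only conclude that $M$ embeds into $M^{**}$, not that it is isomorphic to the kernel of $f^*$. Once that identification is made, the Gorenstein condition and the bound on $\idim R$ do the rest of the work with no case analysis needed.
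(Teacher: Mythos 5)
Your argument is exactly the paper's: dualize a finite presentation of $M^*$, use reflexivity to identify $M$ with $M^{**}$ and hence realize $M$ as a second syzygy, then shift $\Ext^i_R(M,R)$ two places to land past $\idim R \leq 2$. The only difference is cosmetic --- you split the four-term sequence into two short exact sequences and shift twice, whereas the paper invokes a single double-shift lemma from Rotman --- so this is a correct proof by the same method.
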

\begin{proof}
$(\Leftarrow)$ This is Lemma \ref{MCMlemma} (2), and doesn't require the hypothesis on injective dimension.\\
\indent $(\Rightarrow)$ Suppose now that $M$ is reflexive. Since $R$ is noetherian, $M^*$ is finitely presented, so we have an exact sequence of the form
\begin{gather*}
R^{m} \to R^{n} \to M^* \to 0.
\end{gather*}
Applying $\Hom_R(-,R)$ and noting that $M$ is reflexive yields an exact sequence
\begin{gather*}
0 \to M \to R^{n} \xrightarrow{\theta} R^{m} \to \coker \theta \to 0.
\end{gather*}
But then, by \cite[Corollary 6.55]{rot}, $\Ext_R^i(M,R) \cong \Ext_R^{i+2}(\coker \theta,R) = 0$ for all $i \geqslant 1$, where the last equality follows since $\idim R \leqslant 2$. That is, $M$ is maximal Cohen-Macaulay.
\end{proof}

\subsection{The deformations of Crawley-Boevey and Holland} In \cite{cbh}, Crawley-Boevey and Holland introduced the notion of the \emph{deformed preprojective algebra} of a quiver $Q$, and, if $\widetilde{Q}$ is extended Dynkin, a family of $\Bbbk$-algebras $\mathcal{O}^\lambda(\widetilde{Q})$ which may be thought of as deformations of the coordinate ring of a Kleinian singularity. We now recall these definitions, noting that our definition of $\mathcal{O}^\lambda(\widetilde{Q})$ differs slightly from that of Crawley-Boevey and Holland, but is consistent with their definition by \cite[Theorem 0.1]{cbh}.

\begin{defn}
Let $Q$ be a quiver without loops. The \emph{double} of $Q$ is the quiver $\overline{Q}$ obtained from $Q$ by adding a \emph{reverse arrow} $\overline{\alpha} : j \to i$ for each arrow $\alpha: i \to j$ in $Q$. We call the arrows in $\overline{Q}$ which are not reverse arrows \emph{ordinary arrows}. Given a \emph{weight} $\lambda \in \Bbbk^{Q_0}$ for $Q$, the corresponding \emph{deformed preprojective algebra} is the $\Bbbk$-algebra 
\begin{align*}
\Pi^\lambda(Q) \coloneqq \Bbbk \overline{Q}/I
\end{align*}
where $I$ is the two-sided ideal of $\Bbbk \overline{Q}$ with generators
\begin{align*}
\sum_{\small{\substack{\alpha \in Q_1 \\ t(\alpha)=i}}} \hspace{-3pt} \alpha \overline{\alpha} \hspace{3pt} - \sum_{\small{\substack{\alpha \in Q_1 \\ h(\alpha)=i }}} \hspace{-3pt} \overline{\alpha} \alpha \hspace{3pt} - \hspace{3pt} \lambda_i e_i
\end{align*}
for each vertex $i \in Q_0$. It is easy to see that we can equivalently define $I$ as being the two-sided ideal with the single generator
\begin{align*}
\sum_{\alpha \in Q_1} \hspace{-3pt} (\alpha \overline{\alpha} - \overline{\alpha} \alpha) \hspace{3pt} - \sum_{i \in Q_0} \lambda_i e_i.
\end{align*}
\end{defn}

It is helpful to think of a weight as a label from $\Bbbk$ at each vertex of $Q$, and we will often refer to $\lambda_i$ as the weight at vertex $i$.\\
\indent Now suppose that $\widetilde{Q}$ is extended Dynkin, with vertices and arrows (of its double) labelled as in Figure \ref{dynkinquivs}. Throughout this paper, it will be our convention that $\widetilde{Q}$ denotes an extended Dynkin quiver, while $Q$ will denote the corresponding Dynkin quiver obtained by removing the extending vertex $0$, where the orientation of the arrows comes from Figure \ref{dynkinquivs}. \\
\indent We are now able to define the algebras of interest to us. We write $\mathcal{O}^\lambda(\widetilde{Q})$ for the algebra
\begin{align*}
\mathcal{O}^\lambda(\widetilde{Q}) \coloneqq e_0 \Pi^\lambda(\widetilde{Q}) e_0.
\end{align*}
\noindent The elements of $\mathcal{O}^\lambda(\widetilde{Q})$ may be thought of as linear combinations of (equivalence classes of) paths in the double of $\widetilde{Q}$ which start and end at the extending vertex $0$.

\begin{figure}[p]
\centering

\begin{tikzpicture}[->,>=stealth,thick,scale=1.14]

\node at (-7-0.3,0) {$\widetilde{\mathbb{A}}_n$};

\node (0) at (4*360/6: 1.6cm) {0};
\node (1) at (3*360/6: 1.6cm) [thick] {1};
\node (2) at (2*360/6: 1.6cm) [thick] {2};
\node (3) at (1*360/6: 1.6cm) [thick] {\color{white}{4}};
\node (4) at (0*360/6: 1.6cm) [thick] {\color{white}{4}};
\node (5) at (-1*360/6: 1.6cm) [thick] {$n$};

\draw (0) to node[right]{$\alpha_0$} (1) ;
\draw (1) [out=4*360/6, in=3*360/6] to node[left]{$\overline{\alpha}_0$} (0);
\draw (1) to node[right,pos=0.4]{$\alpha_1$} (2);
\draw (2) [out=3*360/6, in=2*360/6] to node[left,shift={(0pt,4pt)}]{$\overline{\alpha}_1$} (1);
\draw (2) to node[below]{$\alpha_2$} (3);
\draw (3) [out=2*360/6, in=1*360/6] to node[above]{$\overline{\alpha}_2$} (2);
\draw (4) to node[left,pos=0.4]{$\alpha_{n-1}$} (5);
\draw (5) [out=0*360/6, in=-1*360/6] to node[right,shift={(0pt,-4pt)}]{$\overline{\alpha}_{n-1}$} (4);
\draw (5) to node[above]{$\alpha_n$} (0);
\draw (0) [out=-1*360/6, in=-2*360/6] to node[below]{$\overline{\alpha}_n$} (5);
\draw[dotted,-,shorten >=6pt,shorten <=6pt] (3) to (4);

\node at (5.5-0.3,0) {\phantom{a}};
\end{tikzpicture}

\begin{tikzpicture}[->,>=stealth,thick,scale=1.14]

\node at (-3.765-0.4,0.1) {$\widetilde{\mathbb{D}}_n$};

\node (0) at (-1.061,-1.061) {$0$};
\node (0a) at (-1.061-0.1,-1.061+0.05) {$\phantom{0}$};
\node (0b) at (-1.061+0.05,-1.061-0.15) {$\phantom{0}$};

\node (1) at (-1.061,1.061) {$1$};
\node (1a) at (-1.061+0.05,1.061+0.15) {$\phantom{0}$};
\node (1b) at (-1.061-0.13,1.061-0.02) {$\phantom{0}$};

\node (2) at (0,0) {$2$};
\node (2a) at (0,0.1) {$\phantom{2}$};
\node (2b) at (0,-0.1) {$\phantom{2}$};
\node (2c) at (-0.05,0.15) {$\phantom{2}$};
\node (2d) at (0.1,-0.05) {$\phantom{2}$};

\node (2e) at (0.1,0.05) {$\phantom{2}$};
\node (2f) at (-0.08,-0.12) {$\phantom{2}$};

\node (3) at (1.5,0) {$3$};
\node (3a) at (1.5,0.1) {$\phantom{3}$};
\node (3b) at (1.5,-0.1) {$\phantom{3}$};
\node (4) at (3,0) {$\phantom{3}$};
\node (4a) at (3,0.1) {$\phantom{3}$};
\node (4b) at (3,-0.1) {$\phantom{3}$};
\node (5) at (3.5,0) {$\cdots$};
\node (6) at (4,0) {$\phantom{3}$};
\node (6a) at (4,0.1) {$\phantom{3}$};
\node (6b) at (4,-0.1) {$\phantom{3}$};
\node (7) at (5.75,0) {$n{-}2$};
\node (7a) at (5.5,0.1) {$\phantom{3}$};
\node (7a2) at (5.5,-0.1) {$\phantom{3}$};


\node (7c) at (6-0.1,0.05) {$\phantom{2}$};
\node (7d) at (6+0.06,-0.15) {$\phantom{2}$};
\node (7e) at (6+0.05,0+0.15) {$\phantom{0}$};
\node (7f) at (6-0.13,0-0.02) {$\phantom{0}$};
\node (8) at (6.27+1.061,1.061) {$n{-}1$};

\node (8a) at (6+1.061-0.05,1.061+0.15) {$\phantom{0}$};
\node (8b) at (6+1.061+0.13,1.061-0.02) {$\phantom{0}$};
\node (9) at (6+1.061,-1.061) {$n$};
\node (9a) at (6+1.061+0.1,-1.061+0.05) {$\phantom{n}$};
\node (9b) at (6+1.061-0.08,-1.061-0.12) {$\phantom{n}$};

\node at (-3.765-0.4+12.5,0) {\phantom{a}};

\draw (0a) to (2c);
\draw (2d) to (0b);
\draw (1a) to (2e);
\draw (2f) to (1b);
\draw (2a) to node[above]{$\alpha_{2}$} (3a);
\draw (3b) to node[below]{$\overline{\alpha}_{2}$} (2b);
\draw (3a) to node[above]{$\alpha_{3}$} (4a);
\draw (4b) to node[below]{$\overline{\alpha}_{3}$} (3b);
\draw (6a) to node[above]{$\alpha_{n{-}3}$} (7a);
\draw (7a2) to node[below]{$\overline{\alpha}_{n{-}3}$} (6b);
\draw (7c) to (8a);
\draw (8b) to (7d);
\draw (7e) to (9a);
\draw (9b) to (7f);

\node at (-0.88,-0.30) {$\alpha_{0}$};
\node at (-0.15,-0.78) {$\overline{\alpha}_{0}$};
\node at (-0.88,0.30) {$\overline{\alpha}_{1}$};
\node at (-0.17,0.78) {$\alpha_{1}$};

\node at (6,0.78) {$\alpha_{n{-}1}$};
\node at (7.1,0.30) {$\overline{\alpha}_{n{-}1}$};
\node at (6.93,-0.30) {$\alpha_{n}$};
\node at (6.2,-0.78) {$\overline{\alpha}_{n}$};

\end{tikzpicture}

\begin{tikzpicture}[->,>=stealth,thick,scale=1.14]

\node at (-4.25,1.6) {$\widetilde{\mathbb{E}}_6$};

\node (2) at (0,0) {2};
\node (2a) at (0,0.1) {\phantom{2}};
\node (2b) at (0,-0.1) {\phantom{2}};
\node (3) at (1.5,0) {3};
\node (3a) at (1.5,0.1) {\phantom{2}};
\node (3b) at (1.5,-0.1) {\phantom{2}};

\node (4) at (3,0) {4};
\node (4a) at (3,0.1) {\phantom{2}};
\node (4b) at (3,-0.1) {\phantom{2}};
\node (4c) at (3.1,0) {\phantom{2}};
\node (4d) at (2.9,0) {\phantom{2}};

\node (5) at (4.5,0) {5};
\node (5a) at (4.5,0.1) {\phantom{2}};
\node (5b) at (4.5,-0.1) {\phantom{2}};
\node (6) at (6,0) {6};
\node (6a) at (6,0.1) {\phantom{2}};
\node (6b) at (6,-0.1) {\phantom{2}};

\node (0) at (3,3) {0};
\node (0a) at (3.1,3) {\phantom{2}};
\node (0b) at (2.9,3) {\phantom{2}};
\node (1) at (3,1.5) {1};
\node (1a) at (3.1,1.5) {\phantom{2}};
\node (1b) at (2.9,1.5) {\phantom{2}};

\draw (0a) to node[right]{$\alpha_{0}$} (1a);
\draw (1a) to node[right]{$\overline{\alpha}_{1}$} (4c);
\draw (1b) to node[left]{$\overline{\alpha}_{0}$} (0b);
\draw (4d) to node[left]{$\alpha_{1}$}(1b);

\draw (2a) to node[above]{$\alpha_{2}$} (3a);
\draw (3a) to node[above]{$\overline{\alpha}_{3}$} (4a);
\draw (4a) to node[above]{$\alpha_{4}$} (5a);
\draw (5a) to node[above]{$\overline{\alpha}_{5}$} (6a);

\draw (3b) to node[below]{$\overline{\alpha}_{2}$} (2b);
\draw (4b) to node[below]{$\alpha_{3}$} (3b);
\draw (5b) to node[below]{$\overline{\alpha}_{4}$} (4b);
\draw (6b) to node[below]{$\alpha_{5}$} (5b);

\node at (-4.25+12.5,0) {\phantom{a}};

\end{tikzpicture}

\begin{tikzpicture}[->,>=stealth,thick,scale=1.14]

\node at (-2.75,0.6) {$\widetilde{\mathbb{E}}_7$};

\node (0) at (0,0) {0};
\node (0a) at (0,0.1) {\phantom{0}};
\node (0b) at (0,-0.1) {\phantom{0}};
\node (1) at (1.5,0) {1};
\node (1a) at (1.5,0.1) {\phantom{0}};
\node (1b) at (1.5,-0.1) {\phantom{0}};
\node (2) at (3,0) {2};
\node (2a) at (3,0.1) {\phantom{0}};
\node (2b) at (3,-0.1) {\phantom{0}};
\node (3) at (4.5,0) {3};
\node (3a) at (4.5,0.1) {\phantom{0}};
\node (3b) at (4.5,-0.1) {\phantom{0}};
\node (3c) at (4.6,0) {\phantom{0}};
\node (3d) at (4.4,0) {\phantom{0}};
\node (4) at (6,0) {4};
\node (4a) at (6,0.1) {\phantom{0}};
\node (4b) at (6,-0.1) {\phantom{0}};
\node (5) at (7.5,0) {5};
\node (5a) at (7.5,0.1) {\phantom{0}};
\node (5b) at (7.5,-0.1) {\phantom{0}};
\node (6) at (9,0) {6};
\node (6a) at (9,0.1) {\phantom{0}};
\node (6b) at (9,-0.1) {\phantom{0}};
\node (7) at (4.5,1.5) {7};
\node (7a) at (4.6,1.5) {\phantom{0}};
\node (7b) at (4.4,1.5) {\phantom{0}};

\draw (0a) to (1a);
\draw (1a) to (2a);
\draw (2a) to (3a);
\draw (3a) to (4a);
\draw (4a) to (5a);
\draw (5a) to (6a);
\draw (1b) to (0b);
\draw (2b) to (1b);
\draw (3b) to (2b);
\draw (4b) to (3b);
\draw (5b) to (4b);
\draw (6b) to (5b);
\draw (7a) to (3c);
\draw (3d) to (7b);

\node at (-2.75+12.5,1.7) {\phantom{a}};

\end{tikzpicture}

\begin{tikzpicture}[->,>=stealth,thick,scale=1.14]

\node at (-2,0.6) {$\widetilde{\mathbb{E}}_8$};
\node at (0,1.7) {\phantom{a}};

\node (0) at (0,0) {0};
\node (0a) at (0,0.1) {\phantom{0}};
\node (0b) at (0,-0.1) {\phantom{0}};
\node (1) at (1.5,0) {1};
\node (1a) at (1.5,0.1) {\phantom{0}};
\node (1b) at (1.5,-0.1) {\phantom{0}};
\node (2) at (3,0) {2};
\node (2a) at (3,0.1) {\phantom{0}};
\node (2b) at (3,-0.1) {\phantom{0}};
\node (3) at (4.5,0) {3};
\node (3a) at (4.5,0.1) {\phantom{0}};
\node (3b) at (4.5,-0.1) {\phantom{0}};
\node (4) at (6,0) {4};
\node (4a) at (6,0.1) {\phantom{0}};
\node (4b) at (6,-0.1) {\phantom{0}};
\node (5) at (7.5,0) {5};
\node (5a) at (7.5,0.1) {\phantom{0}};
\node (5b) at (7.5,-0.1) {\phantom{0}};
\node (5c) at (7.6,0) {\phantom{0}};
\node (5d) at (7.4,0) {\phantom{0}};
\node (6) at (9,0) {6};
\node (6a) at (9,0.1) {\phantom{0}};
\node (6b) at (9,-0.1) {\phantom{0}};
\node (7) at (10.5,0) {7};
\node (7a) at (10.5,0.1) {\phantom{0}};
\node (7b) at (10.5,-0.1) {\phantom{0}};
\node (8) at (7.5,1.5) {8};
\node (8a) at (7.6,1.5) {\phantom{0}};
\node (8b) at (7.4,1.5) {\phantom{0}};

\draw (0a) to (1a);
\draw (1a) to (2a);
\draw (2a) to (3a);
\draw (3a) to (4a);
\draw (4a) to (5a);
\draw (5a) to (6a);
\draw (6a) to (7a);
\draw (1b) to (0b);
\draw (2b) to (1b);
\draw (3b) to (2b);
\draw (4b) to (3b);
\draw (5b) to (4b);
\draw (6b) to (5b);
\draw (7b) to (6b);
\draw (8a) to (5c);
\draw (5d) to (8b);

\end{tikzpicture}

\caption{Doubles of extended Dynkin graphs with the labelling of vertices and arrows that will be used throughout this paper. We have labelled the arrows only for those quivers in which we will need to refer to specific paths.} \label{dynkinquivs}
\end{figure}

\indent If $\lambda = \mathbf{0}$, then $\Pi(Q) \coloneqq \Pi^\lambda(Q)$ is the (undeformed) preprojective algebra of Gelfand and Ponomarev \cite{gp}, and in this case we also write $\mathcal{O}(\widetilde{Q}) \coloneqq \mathcal{O}^\lambda(\widetilde{Q})$. We will often write $\Pi^\lambda$ and $\mathcal{O}^\lambda$ (or $\Pi$ and $\mathcal{O}$ if $\lambda = \mathbf{0}$) when the corresponding quiver is either unimportant or understood. \\
\indent For our purposes, it is important to know precisely when the rings $\mathcal{O}^\lambda$ are noncommutative. This depends on the weight $\lambda$ and also on a vector $\delta \in \mathbb{N}^{\widetilde{Q}_0}$, which we now define. Let $G$ be the finite subgroup of $\text{SL}(2,\Bbbk)$ corresponding to $\widetilde{Q}$ by the McKay correspondence. Then each vertex of $\widetilde{Q}$ corresponds to an irreducible representation $W_i$ of $G$, and we set $\delta_i \coloneqq \dim_\Bbbk W_i$. If we number the vertices of $\widetilde{Q}$ as in Figure \ref{dynkinquivs}, then
\begin{align*}
&\widetilde{\mathbb{A}}_n: \quad \delta = ( \gap \underbrace{1,1, \dots, 1,1}_{n+1 \text{ times}} \gap ) \\
&\widetilde{\mathbb{D}}_n: \quad \delta = (1,1,\underbrace{2,2 \dots,2,2}_{n-3 \text{ times}},1,1) \\
&\widetilde{\mathbb{E}}_6: \quad \delta = (1,2,1,2,3,2,1) \\
&\widetilde{\mathbb{E}}_7: \quad \delta = (1,2,3,4,3,2,1,2) \\
&\widetilde{\mathbb{E}}_8: \quad \delta = (1,2,3,4,5,6,4,2,3).
\end{align*}

\indent Crawley-Boevey--Holland proved the following result:

\begin{thm}[{\cite[Theorem 0.1, Theorem 0.4 (1)]{cbh}}]
$\mathcal{O}^\lambda$ is commutative if and only if $\lambda \cdot \delta = \sum_{i \in \widetilde{Q}_0} \lambda_i \delta_i = 0$. In the case when $\lambda = \mathbf{0}$, $\mathcal{O}$ is isomorphic to the coordinate ring of the Kleinian singularity corresponding to $\widetilde{Q}$.
\end{thm}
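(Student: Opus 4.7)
The plan is to invoke the main theorems of \cite{cbh}: the first assertion is precisely \cite[Theorem 0.4(1)]{cbh}, and the second is extracted from \cite[Theorem 0.1]{cbh} combined with \cite[Lemma 1.1]{cbh}. I will nonetheless sketch how I would recover each statement, since the structural input is what will be used throughout the paper.

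For the second part (the case $\lambda=\mathbf{0}$), the approach I would take is via the identification of $\Pi(\widetilde{Q})$ with (a ring Morita equivalent to) the skew group algebra $\Bbbk[x,y]\# G$, where $G\subset \mathrm{SL}(2,\Bbbk)$ is the finite subgroup corresponding to $\widetilde{Q}$. Under the McKay correspondence the vertex idempotents $e_i\in \Pi(\widetilde{Q})$ match up with the isotypic projections in $\Bbbk[x,y]\#G$, with the extending vertex $0$ corresponding to the trivial representation (consistent with $\delta_0=1$). Under this identification,
\begin{align*}
\mathcal{O} = e_0\, \Pi(\widetilde{Q})\, e_0 \;\cong\; e_0\bigl(\Bbbk[x,y]\# G\bigr)e_0 \;\cong\; \Bbbk[x,y]^G,
\end{align*}
which is exactly the coordinate ring of the Kleinian singularity corresponding to $\widetilde{Q}$. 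Constructing this isomorphism is the classical part, and I would quote it rather than rebuild it.

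For the first part, my plan is to use the deformation structure. The algebra $\Pi^\lambda(\widetilde{Q})$ carries a natural filtration whose associated graded is $\Pi(\widetilde{Q})$, and this restricts to a filtration on $\mathcal{O}^\lambda$ with $\grr\mathcal{O}^\lambda\cong\mathcal{O}$. Since $\mathcal{O}$ is commutative by the previous paragraph, commutators in $\mathcal{O}^\lambda$ are forced to lie in lower filtered degree, so commutativity of $\mathcal{O}^\lambda$ is controlled by a single obstruction living in the deformation. I would compute this obstruction explicitly by evaluating a commutator of two generators of $\mathcal{O}^\lambda$ (written as cycles $p\overline{p}$ at vertex $0$ in $\widetilde{Q}$) modulo the deformed preprojective relations $\sum_\alpha[\alpha,\overline{\alpha}] = \sum_i \lambda_i e_i$. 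Summing these relations weighted by the dimensions $\delta_i$ and using that $\delta$ is a null root for the affine Cartan form (so that the oriented terms cancel in pairs), the only surviving contribution is the scalar $\sum_i \lambda_i\delta_i = \lambda\cdot\delta$; this is exactly the obstruction to commutativity.

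The hard part of a from-scratch argument would be rigorously producing the filtration and showing that the obstruction really reduces to the single scalar $\lambda\cdot\delta$, rather than to a more complicated element of $\mathcal{O}$. One needs both that $\delta$ spans the radical of the Euler form on $\widetilde{Q}$ (to force cancellation of all oriented contributions) and that $\mathcal{O}^\lambda$ is generated in low degree so that checking one commutator suffices. Both are established in \cite{cbh}, so in the interest of keeping the exposition self-contained only where it serves the rest of the paper, I would simply cite the two results indicated above.
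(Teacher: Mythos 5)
Your citation targets are exactly what the paper uses: it attributes the first assertion to \cite[Theorem 0.4(1)]{cbh} and derives the second by combining \cite[Theorem 0.1]{cbh} with \cite[Lemma 1.1]{cbh}, with no independent proof given. The supplementary sketches you offer are harmless but superfluous to the paper's argument; since you ultimately fall back on the same references, your proposal matches the paper's approach.
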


In fact, by \cite[Lemma 2.2]{cbh}, one may always assume that $\lambda \cdot \delta$ is either $0$ or $1$. When $\lambda$ is a weight with $\lambda \cdot \delta = 0$ (so that $\mathcal{O}^\lambda$ is commutative), if we define $\lambda' = (\lambda_0 +1, \lambda_1, \dots , \lambda_n)$ then we consider $\mathcal{O}^{\lambda'}$ to be a noncommutative analogue of $\mathcal{O}^\lambda$. It is natural to ask if there is any relationship between the singularity categories of $\mathcal{O}^\lambda$ and $\mathcal{O}^{\lambda'}$. In fact, it follows from Theorem \ref{introthm0} that the singularity categories of these $\Bbbk$-algebras are triangle equivalent. \\
\indent When $\widetilde{Q} = \widetilde{\mathbb{A}}_n$, it can be checked that $\mathcal{O}^\lambda$ has a presentation of the form
\begin{align}
\frac{\Bbbk \langle x,y,z \rangle}{\left \langle
\begin{array}{cc}
{xz = (z+\lambda \cdot \delta)x}, & {xy = \prod_{i=0}^n \left(z + \sum_{j=1}^i \lambda_j \right)} \\
{yz = (z-\lambda \cdot \delta)y}, & {yx = \prod_{i=0}^n \left(z - \lambda \cdot \delta + \sum_{j=1}^i \lambda_j \right)}
\end{array}
\right \rangle}, \label{PresForO}
\end{align}
\noindent When $\mathcal{O}^\lambda$ is noncommutative, since there is no loss in generality in assuming $\lambda \cdot \delta = 1$, these are precisely the algebras considered by Hodges in \cite{hodges} and Bavula in \cite{bavula}, where in the latter they were called generalised Weyl algebras. \\
\indent In \cite[\S 7]{cbh}, the authors prove a number of results in the case where the weight $\lambda$ is \emph{dominant}, a term which we now define. Fix a total ordering $\prec$ on $\Bbbk$ which also satisfies the following:
\begin{enumerate}[{\normalfont (1)},topsep=0pt,itemsep=0pt]
\item If $a \prec b$, then $a+c \prec b+c$ for all $c \in \Bbbk$;
\item On the integers, $\prec$ coincides with the usual order; and
\item For any $a \in \Bbbk$, there exists $m \in \ZZ$ with $a \prec m$.
\end{enumerate}
For example, when $\Bbbk = \CC$ we may define $\prec$ by $z \prec z'$ if and only if $\real z < \real z'$, or $\real z = \real z'$ and $\imaginary z < \imaginary z'$. We then say that a weight $\lambda \in \Bbbk^{Q_0}$ is \emph{dominant} if $\lambda_i \succeq 0$ for all $i \in Q_0$. \\
\indent When $\lambda$ is dominant and $Q$ is (extended) Dynkin, Crawley-Boevey--Holland showed that it is easy to determine certain representation-theoretic properties of $\Pi^\lambda(Q)$. For example, we have the following useful result which we will use frequently in later sections:

\begin{lem}[{\cite[Lemma 7.1 (1)]{cbh}}] \label{PiofDynkin}
Suppose that $Q$ is Dynkin, and let $\lambda$ be a dominant weight for $Q$. Write $Q_\lambda$ for the full subquiver supported on those vertices $i$ with $\lambda_i = 0$. Then $\Pi^\lambda(Q) \cong \Pi(Q_\lambda)$. In particular, the projective $\Pi^\lambda(Q)$-modules are the modules $e_i \Pi^\lambda(Q)$, where $i$ is a vertex with $\lambda_i = 0$.
\end{lem}

\subsection{Restriction to quasi-dominant weights}
A weaker version of dominance will play an important role in this paper, which we now define:

\begin{defn} \label{quasidomdef}
If $\widetilde{Q}$ is extended Dynkin, we say that a weight $\lambda$ is \emph{quasi-dominant} if $\lambda_i \succeq 0$ for all $i \neq 0$, where $\prec$ is a total ordering on $\Bbbk$ as above.
\end{defn}

\indent It turns out that we are able to restrict attention to quasi-dominant weights for the remainder of this paper. We now state this as an assumption, before explaining why this is the case.

\begin{ass} \label{asss}
If $\lambda$ is a weight for an extended Dynkin quiver $\widetilde{Q}$, then we always assume that the weight $\lambda$ is quasi-dominant unless explicitly stated otherwise.
\end{ass}

To explain why this restriction is possible, we first recall a definition. Let $Q$ be a quiver, and let $C = 2I - A$ be the generalised Cartan matrix of $Q$, where $A$ is the adjacency matrix of the underlying graph of $Q$. For each loop-free vertex $i \in Q_0$, define the \emph{dual reflection} $r_i : \Bbbk^{Q_0} \to \Bbbk^{Q_0}$ by
\begin{align*}
(r_i \lambda)_j = \lambda_j - C_{ij} \lambda_i.
\end{align*}
It is easy to see that if $\widetilde{Q}$ is extended Dynkin then $\lambda \cdot \delta = (r_i \lambda) \cdot \delta$. We then have the following result, which appears in unpublished work of Boddington and Levy, \cite{levy}. 

\begin{lem} \label{bodlemma}
Suppose that $\lambda$ is a weight for an extended Dynkin quiver $\widetilde{Q}$, and let $\rho$ be a sequence of dual reflections at vertices other than the extending vertex $0$. Then $\mathcal{O}^\lambda \cong \mathcal{O}^{\rho(\lambda)}$.
\end{lem}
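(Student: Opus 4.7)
The plan is to reduce the claim to a single dual reflection and then invoke the reflection functor machinery of Crawley-Boevey and Holland. Since an arbitrary sequence of dual reflections is a composition of single ones, it suffices to prove the lemma when $\rho = r_i$ is a single dual reflection at some vertex $i \neq 0$. If $\lambda_i = 0$, then the formula for the dual reflection recalled in Section~5.1 gives $r_i(\lambda) = \lambda$, so the claim is trivial, and we may further assume $\lambda_i \neq 0$.

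Under the assumption $\lambda_i \neq 0$, \cite[\S 5]{cbh} constructs an explicit reflection functor $F_i : \mod \Pi^\lambda(\widetilde{Q}) \xrightarrow{\simeq} \mod \Pi^{r_i \lambda}(\widetilde{Q})$ which is an equivalence of categories, implemented by a reflection bimodule. The crucial feature of this functor, visible from its construction, is that it fixes the indecomposable projectives at every vertex other than $i$; concretely, $F_i(e_j \Pi^\lambda(\widetilde{Q})) \cong e_j \Pi^{r_i \lambda}(\widetilde{Q})$ as right $\Pi^{r_i \lambda}(\widetilde{Q})$-modules for every $j \neq i$. Since $i \neq 0$ by hypothesis, this applies at $j = 0$, yielding an isomorphism $F_i(e_0 \Pi^\lambda(\widetilde{Q})) \cong e_0 \Pi^{r_i \lambda}(\widetilde{Q})$.

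Passing to endomorphism rings and using that $F_i$ is an equivalence gives
\begin{align*}
\mathcal{O}^\lambda(\widetilde{Q}) = \End_{\Pi^\lambda}(e_0 \Pi^\lambda)^{op} \cong \End_{\Pi^{r_i \lambda}}(F_i(e_0 \Pi^\lambda))^{op} \cong \End_{\Pi^{r_i \lambda}}(e_0 \Pi^{r_i \lambda})^{op} = \mathcal{O}^{r_i \lambda}(\widetilde{Q}),
\end{align*}
which is the required isomorphism; iterating over each reflection in $\rho$ yields the full claim.

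The principal obstacle is verifying that $F_i$ genuinely fixes the projective $e_0 \Pi^\lambda(\widetilde{Q})$ and that the idempotents at vertex $0$ on each side correspond under this identification. This is precisely where the restriction $i \neq 0$ is indispensable: reflecting at the extending vertex would modify this projective summand and in general only produce a Morita equivalence of corner rings rather than a ring isomorphism. Once the definition of the reflection bimodule from Section~5.1 is unpacked, the compatibility of the idempotents at vertex $0$ is a direct computation, but it is the delicate step on which the strengthening over \cite[Lemma~7.9]{cbh} hinges.
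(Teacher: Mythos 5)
This lemma is stated in the paper without proof: it is attributed to unpublished work of Boddington and Levy (the reference \cite{levy}), so there is no internal proof to compare against, and your proposal must be assessed on its own.

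Your strategy is reasonable and, I believe, the right one: reduce to a single dual reflection $r_i$ with $i \neq 0$, dispatch the trivial case $\lambda_i = 0$, invoke the reflection equivalence $F_i$ from Crawley-Boevey and Holland, apply it to $e_0\Pi^\lambda$, and pass to endomorphism rings. The endomorphism-ring bookkeeping is fine (the $(-)^{\mathrm{op}}$ is unnecessary with the paper's right-module conventions, but that is cosmetic). The entire weight of the argument, however, rests on the assertion that $F_i(e_j\Pi^\lambda) \cong e_j\Pi^{r_i\lambda}$ for every $j \neq i$, with the vertex idempotents matching under the induced identification of endomorphism rings. You flag this as ``the delicate step,'' correctly so, but you do not verify it. What \cite[Corollary 5.2]{cbh} asserts is a Morita equivalence of the $\Pi^\lambda$'s, and Morita equivalence of the $\Pi^\lambda$'s alone yields only a Morita equivalence --- not an isomorphism --- of the corner rings $\mathcal{O}^\lambda$; that weaker conclusion is already \cite[Lemma 7.9]{cbh}, which this lemma is supposed to strengthen. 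The upgrade to an isomorphism lives precisely in the claim you defer: that the progenerator bimodule decomposes as $\bigoplus_{j\neq i} e_j\Pi^\lambda \oplus T_i$ and that the resulting idempotents of its endomorphism ring are identified with the vertex idempotents $e_j$ of $\Pi^{r_i\lambda}$. If one unwinds the construction in \cite[\S 5]{cbh} and verifies this, your argument is complete; as written it asserts the conclusion of that verification rather than carrying it out, so the real content of the Boddington--Levy result is left unproved. (One small slip: your phrase ``the definition of the reflection bimodule from Section~5.1'' should point to \cite[\S 5]{cbh}, not to Section 5.1 of the present paper, which only defines dual reflections on weights and introduces no bimodule.)
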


This is a strengthening of \cite[Lemma 7.9]{cbh}, in which the authors established only a Morita equivalence between these two rings, rather than an isomorphism. Combining Lemma \ref{bodlemma} with \cite[Lemma 7.8]{cbh}, we have the following result, which justifies the restriction given in Assumption \ref{asss}.

\begin{lem} \label{quasidomassum}
Suppose that $\lambda$ is a weight for an extended Dynkin quiver $\widetilde{Q}$. Then there exists a quasi-dominant weight $\lambda '$ with $\mathcal{O}^\lambda \cong \mathcal{O}^{\lambda '}$.
\end{lem}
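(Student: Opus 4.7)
The plan is to combine \cite[Lemma 7.8]{cbh} with the Boddington-Levy lemma stated immediately above. Crawley-Boevey and Holland's Lemma 7.8 produces, for an arbitrary weight $\lambda$, a finite sequence $\rho = r_{i_k} \circ \cdots \circ r_{i_1}$ of dual reflections at non-extending vertices such that $\lambda' := \rho(\lambda)$ is quasi-dominant. Applying the Boddington-Levy lemma once per reflection then chains together isomorphisms
$$\mathcal{O}^\lambda \cong \mathcal{O}^{r_{i_1}(\lambda)} \cong \mathcal{O}^{r_{i_2} r_{i_1}(\lambda)} \cong \cdots \cong \mathcal{O}^{\rho(\lambda)} = \mathcal{O}^{\lambda'},$$
which is the required statement. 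The real content thus lives in the cited lemma, and my main task is simply to verify that its Weyl-group-orbit argument adapts cleanly to our abstractly-ordered field $(\Bbbk, \prec)$.

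For completeness, I would briefly outline that argument. The subgroup of the Weyl group of $\widetilde{Q}$ generated by dual reflections at non-extending vertices is the finite Weyl group $W(Q)$ of the Dynkin subquiver $Q$, so any weight has a finite $W(Q)$-orbit in $\Bbbk^{\widetilde{Q}_0}$. To force termination at a quasi-dominant element of this orbit, I would introduce a potential function $P(\mu) := \sum_{i \neq 0} \rho_i \mu_i$, with positive integer coefficients $\rho_i$ chosen so that every component $(C_Q \rho)_j$ is a positive integer, where $C_Q$ is the Cartan matrix of $Q$. Such a choice exists since the ADE Cartan matrix $C_Q$ is positive definite with $C_Q^{-1}$ having positive rational entries, so one may take $\rho$ to be a sufficiently divisible integer multiple of $C_Q^{-1}\mathbf{1}$. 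A direct calculation from the reflection formulae $r_j(\mu)_j = -\mu_j$ and $r_j(\mu)_i = \mu_i + c_{ij}\mu_j$ (for $i \neq j$) then yields
$$P(r_j(\mu)) - P(\mu) = -(C_Q\rho)_j \cdot \mu_j.$$

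To conclude: if $\mu_j \prec 0$ for some $j \neq 0$, then axiom (1) applied to $\mu_j \prec 0$ (adding $-\mu_j$ to both sides) gives $0 \prec -\mu_j$, and multiplying by the positive integer $(C_Q\rho)_j$ amounts to iteratively adding $-\mu_j$ to itself, which preserves being $\succ 0$ by the same additivity axiom. Hence each reflection at a vertex with negative weight strictly increases $P$, and since $P$ takes only finitely many values on the finite orbit $W(Q)\cdot\lambda$, the process must terminate at a quasi-dominant weight. I do not anticipate any serious obstacle; the only mild subtlety is ensuring the sign-and-magnitude manipulations in the potential argument go through over $(\Bbbk, \prec)$ rather than $(\mathbb{R}, <)$, which is precisely what the three compatibility axioms on $\prec$ guarantee.
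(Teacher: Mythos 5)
Your proposal is correct and matches the paper's approach exactly: the proof is the one-line combination of the Boddington--Levy lemma (upgrading the Morita equivalences of \cite[Lemma 7.9]{cbh} under dual reflections at non-extending vertices to isomorphisms) with \cite[Lemma 7.8]{cbh}, which supplies the finite reflection sequence carrying $\lambda$ to a quasi-dominant weight. Your further sketch of the potential-function argument underlying \cite[Lemma 7.8]{cbh} is sound and correctly adapts to the abstract ordering $(\Bbbk,\prec)$, but it goes beyond what the paper records, since the paper simply cites that lemma.
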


\indent We will see later that this assumption allows one to easily read off a number of useful facts about the module category of $\mathcal{O}^\lambda$, and ultimately its singularity category as well. As a first example, if we restrict our attention to quasi-dominant weights then it is easy to detect whether $\mathcal{O}^\lambda$ is singular:

\begin{lem} \label{quasidomlem}
If $\lambda$ is a quasi-dominant weight for an extended Dynkin quiver $\widetilde{Q}$, then $\mathcal{O}^\lambda$ is singular if and only if $\lambda_i = 0$ for some $i \neq 0$.
\end{lem}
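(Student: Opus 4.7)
The plan is to invoke a root-theoretic smoothness criterion of Crawley-Boevey and Holland and then translate it into the stated condition on the coordinates of $\lambda$ using quasi-dominance. Specifically, in \S 7 of \cite{cbh} the authors establish (possibly in combination with their Morita-equivalence results and their computation of $\gldim \Pi^\lambda$) that, for a quasi-dominant weight $\lambda$ on an extended Dynkin quiver, the algebra $\mathcal{O}^\lambda$ has finite global dimension if and only if $\lambda \cdot \alpha \neq 0$ for every positive root $\alpha$ of the finite-type root system associated to $Q$. Granting this criterion, both directions of the lemma reduce to an elementary calculation with the ordering $\prec$.

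For the $(\Leftarrow)$ direction, suppose $\lambda_i = 0$ for some $i \neq 0$. Take $\alpha$ to be the simple root $\alpha_i$ of $Q$; since $i \neq 0$ this is indeed a positive root of $Q$, and $\lambda \cdot \alpha_i = \lambda_i = 0$. Hence the CBH criterion fails, so $\mathcal{O}^\lambda$ has infinite global dimension, i.e.\ is singular.

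For the contrapositive of $(\Rightarrow)$, suppose $\lambda_i \neq 0$ for every $i \neq 0$ and let $\alpha \in \NN^{Q_0}$ be any positive root of $Q$. Then $\lambda \cdot \alpha = \sum_{i \neq 0} \lambda_i \alpha_i$. By quasi-dominance each $\lambda_i \succeq 0$, and combined with $\lambda_i \neq 0$ this gives $\lambda_i \succ 0$. Since $\alpha$ is nonzero, some coordinate $\alpha_j$ is strictly positive, so the corresponding summand $\lambda_j \alpha_j$ is strictly $\succ 0$. By translation-invariance of $\prec$ the entire sum is $\succ 0$, and in particular nonzero. Hence the CBH criterion is satisfied and $\mathcal{O}^\lambda$ is nonsingular.

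The main obstacle is marshalling the CBH criterion in exactly the form required. Their sharpest statement may be phrased only for \emph{dominant} (rather than quasi-dominant) weights, or in terms of positive roots of $\widetilde{Q}$ rather than of $Q$. If so, one should first apply Lemma \ref{quasidomassum} (or directly Lemma~7.9 of \cite{cbh}) to pass to an equivalent dominant representative, and note that the condition appearing in the lemma translates under Weyl-group reflections to the reflection-invariant statement ``$\lambda \cdot \alpha = 0$ for some positive root $\alpha$ of $Q$'', which is what the CBH criterion actually detects. Once this translation is in place the argument above goes through unchanged.
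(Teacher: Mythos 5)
Your proof is correct and takes essentially the same approach as the paper: both invoke the root-theoretic smoothness criterion from Crawley-Boevey and Holland (Theorem~0.4(4) of \cite{cbh}, which states that $\mathcal{O}^\lambda$ is singular if and only if $\lambda \cdot \alpha = 0$ for some Dynkin root $\alpha$) and then use the simple roots $\varepsilon_i$ plus quasi-dominance to translate this into the coordinate condition. The only superfluous part is your closing paragraph of hedging about needing to pass to dominant weights --- the cited CBH criterion applies to arbitrary weights, so no such reduction is required.
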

\begin{proof}
By \cite[Theorem 0.4 (4)]{cbh}, $\mathcal{O}^\lambda$ is singular if and only if $\lambda \cdot \alpha = 0$ for some \emph{Dynkin root} $\alpha$. The possible values of these Dynkin roots are not important to us; it suffices to know that they have the form $(0,\alpha') \in \mathbb{Z}^{n+1}$ where, in particular, $\alpha'$ has entirely non-negative or non-positive entries, and has at least one nonzero entry. In addition, $\varepsilon_i \in \mathbb{Z}^{n+1}$ for $1 \leqslant i \leqslant n$  is always a Dynkin root, where $\varepsilon_i$ is the $i$th coordinate vector (here the entries are indexed from $0$ to $n$). Therefore, if $\lambda_i = 0$ for some $i \neq 0$ then $\lambda \cdot \alpha = 0$ for the Dynkin root $\alpha = \varepsilon_i$, while if $\lambda_i \neq 0$ for all $i \neq 0$, then necessarily $\lambda \cdot \alpha \neq 0$ for all Dynkin roots $\alpha$. The result then follows.
\end{proof}


\section{The singularity category of $\mathcal{O}^\lambda(\widetilde{Q})$ as a $\Bbbk$-linear category} \label{singcatsec}

Our first step in determining $\stabMCM \mathcal{O}^\lambda$ is to determine its structure as an additive category, or indeed as a $\Bbbk$-linear category. We first identify an important module.

\begin{lem} \label{Pireflex}
$\Pi^\lambda e_0$ is a finitely generated $\mathcal{O}^\lambda$-module, and it satisfies $\End_{\mathcal{O}^\lambda}(\Pi^\lambda e_0) = \Pi^\lambda$. Moreover, $\Pi^\lambda e_0$ is maximal Cohen-Macaulay.
\end{lem}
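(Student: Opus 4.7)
The plan is to prove the two parts separately, exploiting the filtration $\mathcal{F}$ from [cbh] (for which $\grr_\mathcal{F} \mathcal{O}^\lambda = R_Q$, and which extends naturally to $\Pi^\lambda$ with $\grr_\mathcal{F} \Pi^\lambda = \Pi$, the undeformed preprojective algebra) to reduce to the classical McKay correspondence, where the corresponding statement is well known.

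For finite generation, I would first establish that $\Pi^\lambda$ itself is finitely generated as a right $\mathcal{O}^\lambda$-module. When $\lambda = \mathbf{0}$ one has $\Pi \cong \Bbbk[x,y] \# G$, which is free of rank $|G|$ over $R_Q = \Bbbk[x,y]^G$, and the deformed statement follows from this by a standard filtered-to-graded lifting using $\mathcal{F}$. Since $\mathcal{O}^\lambda$ is noetherian, the submodule decomposition
\begin{align*}
\Pi^\lambda e_0 = \bigoplus_{i \in \widetilde{Q}_0} e_i \Pi^\lambda e_0 \subseteq \Pi^\lambda
\end{align*}
immediately implies that $\Pi^\lambda e_0$ is finitely generated as a right $\mathcal{O}^\lambda$-module.

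For the endomorphism ring identification, consider the natural $\Bbbk$-algebra homomorphism
\begin{align*}
\phi: \Pi^\lambda \to \End_{\mathcal{O}^\lambda}(\Pi^\lambda e_0), \qquad a \mapsto L_a,
\end{align*}
which is well defined because the left $\Pi^\lambda$-action and right $\mathcal{O}^\lambda$-action on $\Pi^\lambda e_0$ commute. Injectivity amounts to showing $\Pi^\lambda e_0$ is a faithful left $\Pi^\lambda$-module; for $\lambda = \mathbf{0}$ this is classical, as $\Bbbk[x,y] \# G$ acts faithfully on $\Bbbk[x,y] \cong \Pi e_0$, and the general case follows by a leading-term argument under $\mathcal{F}$. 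For surjectivity, decompose $\psi \in \End_{\mathcal{O}^\lambda}(\Pi^\lambda e_0)$ into components $\psi_{ij}: e_j \Pi^\lambda e_0 \to e_i \Pi^\lambda e_0$; it suffices to realise each $\psi_{ij}$ as left multiplication by an element of $e_i \Pi^\lambda e_j$. For $\lambda = \mathbf{0}$ this recovers the identification $\Hom_{R_Q}(M_j, M_i) = e_i \Pi e_j$ underlying the classical McKay correspondence $\Pi \cong \End_{R_Q}(\bigoplus_k M_k)$.

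The main obstacle is the surjectivity of $\phi$ in the deformed case. The plan is to handle this via a filtration argument: equip $\End_{\mathcal{O}^\lambda}(\Pi^\lambda e_0)$ with the induced filtration $F^n \End = \{\psi : \psi(F^k \Pi^\lambda e_0) \subseteq F^{n+k} \Pi^\lambda e_0 \text{ for all } k\}$. Then $\phi$ is filtered and the associated-graded map factors as $\Pi \to \grr \End_{\mathcal{O}^\lambda}(\Pi^\lambda e_0) \hookrightarrow \End_{R_Q}(\Pi e_0) = \Pi$, which is the identity by construction. Because both filtrations are exhaustive and separated (the latter requires a short check using that the path-length filtration on $\Pi^\lambda$ is bounded below), a standard filtered-to-graded lifting argument then upgrades the classical McKay identification to the desired equality $\End_{\mathcal{O}^\lambda}(\Pi^\lambda e_0) = \Pi^\lambda$.
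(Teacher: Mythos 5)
Your treatment of the endomorphism ring is essentially the paper's argument: both set up the inclusion $\Pi^\lambda \subseteq \End_{\mathcal{O}^\lambda}(\Pi^\lambda e_0)$ (which is left multiplication) and then sandwich at the associated graded level, $\Pi = \grr \Pi^\lambda \subseteq \grr \End_{\mathcal{O}^\lambda}(\Pi^\lambda e_0) \subseteq \End_{\mathcal{O}}(\Pi e_0) = \Pi$, which forces equality (the paper cites \cite[Proposition 1.6.7]{mandr} for the filtered-to-graded lifting step, the same lifting you invoke). So that half is fine.

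The finite-generation half has a genuine gap. You want to deduce finite generation of $\Pi^\lambda e_0$ from finite generation of $\Pi^\lambda$ as a right $\mathcal{O}^\lambda$-module, but $\Pi^\lambda$ is \emph{not} finitely generated over $\mathcal{O}^\lambda = e_0 \Pi^\lambda e_0$ under right multiplication. Indeed, for any $q \in \mathcal{O}^\lambda$ we have $e_i q = e_i e_0 q = 0$ when $i \neq 0$, so the entire (infinite-dimensional) complementary summand $\Pi^\lambda(1-e_0) = \bigoplus_{i \neq 0} \Pi^\lambda e_i$ carries the zero $\mathcal{O}^\lambda$-action and cannot lie in any finitely generated $\mathcal{O}^\lambda$-module. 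The classical identification $\Pi \cong \Bbbk[x,y]\#G$, free of finite rank over $R_Q$, uses the \emph{diagonally embedded} copy $R_Q \cdot 1 \subset \Bbbk[x,y]\#G$, which is not the corner ring $e_0 \Pi e_0$; these give different module structures, and only the corner-ring structure is relevant here. So the "submodule of a noetherian module" step does not apply. The paper avoids this by going the other way around: $\Pi^\lambda$ is noetherian, so the two-sided ideal $\Pi^\lambda e_0 \Pi^\lambda$ is finitely generated, and then \cite[Lemma 1]{montgomery} converts this directly into finite generation of $\Pi^\lambda e_0$ as a right $e_0\Pi^\lambda e_0$-module. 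That lemma (or some substitute for it) is exactly the ingredient your argument is missing.
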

\begin{proof}
The first part of the statement follows from \cite[Lemma 1]{montgomery}. To determine the endomorphism ring, first note that, by \cite[Lemma 1.4, Corollary 3.5]{cbh}, $\Pi^\lambda$ is Morita equivalent to a ring which is a maximal order and hence is itself a maximal order. The claim then follows from the results in \cite[Section 5.4]{dmv}. \\
\indent For the final claim, note that $\Pi^\lambda e_0$ is a reflexive $\mathcal{O}^\lambda$-module by \cite[Section 5.4]{dmv}. Therefore, since $\Pi^\lambda e_0$ is finitely generated, and since $\idim \mathcal{O}^\lambda \leqslant 2$ by \cite[Theorem 1.6]{cbh}, Lemma \ref{MCMreflexive} implies that $\Pi^\lambda e_0$ is maximal Cohen-Macaulay.
\end{proof}

Write $V_i = e_i \Pi^\lambda e_0$; we shall refer to these $\mathcal{O}^\lambda$-modules as \emph{vertex modules}, and they will play an important role in determining $\stabMCM \mathcal{O}^\lambda$. Using Lemma \ref{Pireflex}, we are able to calculate the Hom spaces between the vertex modules.

\begin{cor} \label{homs}
We have $\Hom_{\mathcal{O}^\lambda} (V_i,V_j) = e_j \Pi^\lambda e_i$.
\end{cor}
\begin{proof}
By Lemma \ref{Pireflex}, $\Pi^\lambda = \End_{\mathcal{O}^\lambda}(\Pi^\lambda e_0) = \bigoplus_{k,\ell} \Hom_{\mathcal{O}^\lambda}(e_k\Pi^\lambda e_0, e_\ell \Pi^\lambda e_0)$. Multiplying on the left by $e_j$ kills each Hom space with $\ell \neq j$, while multiplying on the right by $e_i$ kills each Hom space with $k \neq i$. It follows that 
\begin{gather*}
e_j \Pi^\lambda e_i = e_j \left( \bigoplus_{k,\ell} \Hom_{\mathcal{O}^\lambda}(V_k,V_\ell) \right) e_i =  e_j \Hom_{\mathcal{O}^\lambda}(V_i,V_j) \gap e_i = \Hom_{\mathcal{O}^\lambda}(V_i,V_j),
\end{gather*}
as claimed.
\end{proof}

This allows us to determine the stable endomorphism ring of $\Pi^\lambda e_0$. We fix some notation which will be used throughout the rest of this paper: write $Q_\lambda$ for the full subquiver of $\widetilde{Q}$ with vertex set $I_\lambda \coloneqq \{ i \in \{1, \dots , n\} \mid \lambda_i = 0\}$.

\begin{lem} \label{stabendpi1}
We have $\stabEnd_{\mathcal{O}^\lambda}(\Pi^\lambda e_0) \cong \Pi(Q_\lambda)$.
\end{lem}
\begin{proof}
Write $\mu = (\lambda_1, \dots, \lambda_n)$.  By Corollary \ref{homs}, we have that 
\begin{align*}
(\Pi^\lambda e_0)^* = \bigoplus_i \Hom_{\mathcal{O}^\lambda} (e_i \Pi^\lambda e_0, e_0 \Pi^\lambda e_0) = \bigoplus_i e_0 \Pi^\lambda e_i = e_0 \Pi^\lambda.
\end{align*}
Then, noting that $\Pi^\lambda e_0 \left( \Pi^\lambda e_0 \right)^* = \Pi^\lambda e_0 \Pi^\lambda$, we have
\begin{align*}
\stabEnd_{\mathcal{O}^\lambda}(\Pi^\lambda e_0) \cong \frac{\Pi^\lambda}{\Pi^\lambda e_0 \Pi^\lambda} \cong \Pi^{\mu}(Q).
\end{align*}
Since the entries of $\mu$ are all $\succeq 0$ by Assumption \ref{asss} and $Q$ is Dynkin, Lemma \ref{PiofDynkin} tells us that $\Pi^\mu(Q)$ is isomorphic to the preprojective algebra supported on the vertices $i$ of $Q$ with $\mu_i = 0$; that is, $\Pi^\mu(Q) \cong \Pi(Q_\lambda)$. Therefore $\stabEnd_{\mathcal{O}^\lambda}(\Pi^\lambda e_0) \cong \Pi(Q_\lambda)$.
\end{proof}

We are also able to determine when a vertex module is projective. It turns out that this is the case precisely when the corresponding vertex is deleted when passing from $\widetilde{Q}$ to $Q_\lambda$. 

\begin{lem} \label{isproj}
If $i=0$ or $\lambda_i \neq 0$, then $V_i$ is a projective $\mathcal{O}^\lambda$-module.
\end{lem}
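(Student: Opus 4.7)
The case $i = 0$ is immediate, since $V_0 = e_0 \Pi^\lambda e_0 = \mathcal{O}^\lambda$ is the free right $\mathcal{O}^\lambda$-module of rank one.

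For $i \neq 0$ with $\lambda_i \neq 0$, the plan is to exhibit $V_i$ as a direct summand of $V_0^N = (\mathcal{O}^\lambda)^N$ for some $N \geqslant 1$. By Corollary \ref{homs}, such a splitting corresponds to choosing $p_k \in e_0 \Pi^\lambda e_i$ and $q_k \in e_i \Pi^\lambda e_0$ ($k = 1, \dots, N$) so that the maps $\Phi : V_i \to V_0^N$, $x \mapsto (p_k x)_k$, and $\Psi : V_0^N \to V_i$, $(y_k)_k \mapsto \sum_k q_k y_k$, satisfy $\Psi \circ \Phi = \id_{V_i}$; since $\Pi^\lambda e_0$ is a faithful left $\Pi^\lambda$-module (via Lemma \ref{Pireflex}), this translates into the algebraic condition $\sum_k q_k p_k = e_i$ in $\Pi^\lambda$. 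Writing $e_i = e_i^3$, this is in turn equivalent to the single requirement
\begin{gather*}
e_i \in \Pi^\lambda e_0 \Pi^\lambda ,
\end{gather*}
i.e.\ $e_i$ vanishing in the quotient algebra $\overline{\Pi} \coloneqq \Pi^\lambda(\widetilde Q) / \Pi^\lambda e_0 \Pi^\lambda$.

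The key step is to identify $\overline{\Pi}$. Killing $e_0$ also annihilates every arrow of the doubled quiver $\overline{\widetilde Q}$ incident to vertex $0$, so the vertex-$j$ relation (for $j \neq 0$) collapses in the quotient to the same expression but involving only arrows internal to $Q$; hence $\overline{\Pi}$ is canonically isomorphic to the Dynkin deformed preprojective algebra $\Pi^{(\lambda_1, \dots, \lambda_n)}(Q)$. Appealing to Crawley-Boevey's classification of simple modules for such algebras, $e_i$ is nonzero there iff there is a positive root $\alpha$ of $Q$ with $\alpha_i > 0$ and $(\lambda_1, \dots, \lambda_n) \cdot \alpha = 0$. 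Under the quasi-dominance of Assumption \ref{asss}, each $\lambda_j$ ($j \neq 0$) satisfies $\lambda_j \succeq 0$; since positive roots have non-negative integer entries, the pairing $\sum_j \lambda_j \alpha_j$ is a sum of terms $\succeq 0$, so its vanishing forces each $\lambda_j \alpha_j$ to vanish individually. With $\lambda_i \neq 0$ this would require $\alpha_i = 0$, contradicting $\alpha_i > 0$; no such $\alpha$ exists, so $e_i = 0$ in $\overline{\Pi}$, as needed.

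The main obstacle is the clean invocation of Crawley-Boevey's simple-module classification for Dynkin deformed preprojective algebras. A purely ring-theoretic alternative would be to verify $e_i \in \Pi^\lambda e_0 \Pi^\lambda$ directly by iterated use of the defining relations, combining the vertex-$i$ relation with relations at neighbouring vertices to re-route paths $i \to i$ through vertex $0$; connectedness of $\widetilde Q$ ensures this is always possible in principle (as one can verify in small cases such as $\widetilde{\mathbb A}_2$ with $\lambda_2 = 0$, by squaring the vertex-$1$ relation and using the vertex-$2$ relation $\overline{\alpha}_1 \alpha_1 = \alpha_2 \overline{\alpha}_2$ to rewrite), but it becomes Dynkin-type dependent and combinatorially involved.
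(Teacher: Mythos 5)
Your proof is correct, and both you and the paper reduce to the same arithmetic fact — that $e_i \in \Pi^\lambda e_0 \Pi^\lambda$ when $\lambda_i \neq 0$ — but you reach it by a genuinely different route. The paper simply cites \cite[Lemma 7.1(1)]{cbh} to obtain $e_i = 0$ in $\Pi^\lambda/\Pi^\lambda e_0 \Pi^\lambda$, and then invokes the dual basis lemma: $V_i V_i^* = e_i \Pi^\lambda e_0 \Pi^\lambda e_i \ni e_i$, so $V_i$ is projective. You instead re-derive the vanishing of $e_i$ from scratch: you identify the quotient $\Pi^\lambda/\Pi^\lambda e_0 \Pi^\lambda$ with the Dynkin deformed preprojective algebra $\Pi^{\mu}(Q)$ where $\mu = (\lambda_1,\dots,\lambda_n)$ (this is precisely what the paper's Lemma~\ref{stabendpi} proves a bit later, and is not circular), and then argue via representation theory: if $e_i \neq 0$ there, some simple module $S$ would be supported at vertex $i$, the trace of the defining relations forces $\mu \cdot \dim S = 0$, and quasi-dominance together with $\lambda_i \neq 0$ and $(\dim S)_i > 0$ makes this impossible. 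Your projectivity step (exhibiting $V_i$ as a direct summand of a free module via faithfulness of $\Pi^\lambda e_0$) is logically equivalent to the paper's dual-basis argument, just phrased more explicitly. A small remark: you invoke ``Crawley-Boevey's classification of simple modules'' with an iff that is heavier than you need — the ``if'' direction (every such root gives a simple) requires more care, but you only use the ``only if'' direction, which follows from the elementary trace observation $\mu \cdot \dim S = 0$ together with the finite-dimensionality of $\Pi^\mu(Q)$ (so that some simple is supported wherever a nonzero idempotent lives); you could phrase it this way and drop the appeal to roots entirely, which would make the step self-contained. The trade-off is that the paper's citation is one line while your derivation makes the mechanism transparent, at the cost of invoking more machinery than strictly necessary.
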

\begin{proof}
When $i=0$ this is clear. So suppose that $i \neq 0$ and $\lambda_i \neq 0$. Then, as in the proof of Lemma \ref{stabendpi1}, $e_i = 0$ in $\Pi^\lambda / \Pi^\lambda e_0 \Pi^\lambda$ and so $e_i \in \Pi^\lambda e_0 \Pi^\lambda$. But then, using Corollary \ref{homs},
\begin{align*}
V_i V_i^* = e_i \Pi^\lambda e_0 \Pi^\lambda e_i \ni e_i^3 = e_i,
\end{align*}
where $e_i$ is the identity element of $\End_{\Pi^\lambda}(V_i) = e_i \Pi^\lambda e_i$, and so $V_i$ is projective by the Dual Basis Lemma (see \cite[(2.9)]{lam}). 
\end{proof}

It follows that the vertex modules $V_i$ satisfying $\lambda_i \neq 0$ are equal to the zero object in the singularity category, so that $\Pi^\lambda e_0$ and $\bigoplus_{i \in I_\lambda} V_i $ are isomorphic in $\stabMCM \mathcal{O}^\lambda$. When working in the stable module category, we will sometimes refer to those vertex modules whose corresponding weight is zero as non-projective vertex modules.

\begin{prop} \label{addPi} \leavevmode
\begin{enumerate}[{\normalfont (1)},leftmargin=*,topsep=0pt,itemsep=0pt]
\item$\MCM \mathcal{O}^\lambda = \add \Pi^\lambda e_0$.
\item $\stabMCM \mathcal{O}^\lambda =  \add \Pi^\lambda e_0 = \add \left( \bigoplus_{i \in I_\lambda} V_i \right)$.
\end{enumerate}
\end{prop}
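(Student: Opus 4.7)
The plan is to deduce (1) from Auslander's Proposition \ref{auslander} applied to the generator $M = \Pi^\lambda e_0$ in $\MCM \mathcal{O}^\lambda$, and then to obtain (2) as an immediate consequence via Lemma \ref{isproj}. The preceding lemmas in this section have been arranged precisely so that all the hypotheses of Auslander's result are in hand.

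To verify the hypotheses of Proposition \ref{auslander}, I would check three points in turn. First, $\mathcal{O}^\lambda$ is Gorenstein of dimension at most $2$; this is \cite[Theorem 1.6]{cbh}, as already invoked in Corollary \ref{homs}. Second, $M = \Pi^\lambda e_0$ is an MCM generator: it is MCM by Corollary \ref{homs}, and the idempotent decomposition $\Pi^\lambda e_0 = \bigoplus_{i=0}^n V_i$ exhibits $V_0 = e_0 \Pi^\lambda e_0 = \mathcal{O}^\lambda$ as a direct summand, so $\mathcal{O}^\lambda \in \add M$. Third, $\gldim \End_{\mathcal{O}^\lambda}(M) \leqslant 2$: by Lemma \ref{Pireflex} this endomorphism ring is $\Pi^\lambda$, and since $\widetilde{Q}$ is extended Dynkin (hence non-Dynkin), $\Pi^\lambda$ has global dimension at most $2$ by the results of \cite{cbh}. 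Proposition \ref{auslander} then yields $\add \Pi^\lambda e_0 = \MCM \mathcal{O}^\lambda$, which is (1).

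For (2), I would simply pass to the stable category. By Lemma \ref{isproj}, each $V_i$ with $i = 0$ or $\lambda_i \neq 0$ is projective, and hence vanishes in $\stabMCM \mathcal{O}^\lambda$. Consequently the image of $\Pi^\lambda e_0 = \bigoplus_{i=0}^n V_i$ in the stable category is $\bigoplus_{i \in I_\lambda} V_i$, and statement (1) passes to statement (2) under the additive quotient functor $\MCM \mathcal{O}^\lambda \to \stabMCM \mathcal{O}^\lambda$ (which preserves finite direct sums and direct summands).

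There is no real obstacle here, provided the input $\gldim \Pi^\lambda \leqslant 2$ is on hand; the substantive preparatory work has already been done in Lemma \ref{Pireflex} (identifying $\End_{\mathcal{O}^\lambda}(\Pi^\lambda e_0) = \Pi^\lambda$), Corollary \ref{homs} (showing $\Pi^\lambda e_0$ is MCM and computing Hom-spaces between vertex modules), and Lemma \ref{isproj} (pinpointing which vertex modules become zero stably). The mild subtlety is the use of Proposition \ref{auslander} in its forward direction, where the assumption $\idim \mathcal{O}^\lambda \leqslant 2$ is not needed for the implication itself but is needed to ensure we are in the setting where Auslander's result applies.
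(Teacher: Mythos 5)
Your proof is correct and follows essentially the same route as the paper: both deduce (1) from Proposition \ref{auslander} applied to the generator $\Pi^\lambda e_0$, using Lemma \ref{Pireflex} to identify $\End_{\mathcal{O}^\lambda}(\Pi^\lambda e_0) = \Pi^\lambda$ and \cite[Theorem 1.5]{cbh} for $\gldim \Pi^\lambda \leqslant 2$, and both obtain (2) by passing to the stable category and killing the projective vertex modules via Lemma \ref{isproj}. Your closing remark about only the forward implication of Proposition \ref{auslander} being needed (which does not use $\idim \mathcal{O}^\lambda \leqslant 2$) is an accurate and worthwhile clarification.
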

\begin{proof} \leavevmode
\begin{enumerate}[{\normalfont (1)},wide=0pt,topsep=0pt,itemsep=0pt]
\item First note that $\mathcal{O}^\lambda$ is Gorenstein and that, using \cite[Theorem 1.5]{cbh},
\begin{align*}
\gldim \End_{\mathcal{O}^\lambda} (\Pi^\lambda e_0) = \gldim \Pi^\lambda \leqslant 2.
\end{align*}
Since $\Pi^\lambda e_0$ has $\mathcal{O}^\lambda$ as a direct summand, the first claim then follows from Proposition \ref{auslander}.
\item Part (1) immediately implies that $\stabMCM \mathcal{O}^\lambda = \add \Pi^\lambda e_0 = \add \left( \bigoplus_{i} V_i \right)$. But projective modules become the zero object when passing to the stable module category, so the result follows by Lemma \ref{isproj}. \qedhere
\end{enumerate}
\end{proof}

\indent We recall that an additive category is said to be \emph{Krull-Schmidt} if every object decomposes into a finite direct sum of objects, each of which has a local endomorphism ring. By \cite[Theorem 4.2]{krause}, this decomposition is unique up to reordering.

\begin{thm} \label{addequiv}
The functor $\stabHom_{\mathcal{O}^\lambda}(\Pi^\lambda e_0,-)$ induces a $\Bbbk$-linear equivalence
\begin{align*}
\stabMCM \mathcal{O}^\lambda \simeq \proj \Pi(Q_\lambda).
\end{align*}
\end{thm}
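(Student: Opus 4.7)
The plan is to lift the classical additive equivalence (\ref{addequivproj}) from Proposition \ref{auslander}, namely $\Hom_{\mathcal{O}^\lambda}(\Pi^\lambda e_0,-) : \add \Pi^\lambda e_0 \xrightarrow{\simeq} \proj \Pi^\lambda$, to the respective stable categories, with $\proj \Pi^\lambda$ replaced by $\proj \Pi(Q_\lambda)$ on the target. By Proposition \ref{addPi}(1), the source is already $\MCM \mathcal{O}^\lambda$, and the equivalence sends $V_i = e_i \Pi^\lambda e_0$ to $e_i \Pi^\lambda$. Passing to $\stabMCM \mathcal{O}^\lambda$ kills those $V_i$ with $i = 0$ or $\lambda_i \neq 0$, by Lemma \ref{isproj}. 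I would mirror this on the target by composing with the quotient $\Pi^\lambda \twoheadrightarrow \Pi^\lambda / \Pi^\lambda e_0 \Pi^\lambda \cong \Pi(Q_\lambda)$ constructed in the proof of Lemma \ref{stabendpi}; the argument of Lemma \ref{isproj} shows that this quotient kills exactly those idempotents $e_i$ with $i = 0$ or $\lambda_i \neq 0$, so what survives is $\Pi(Q_\lambda) = \bigoplus_{i \in I_\lambda} e_i \Pi(Q_\lambda)$. This makes it plausible that the functor $\stabHom_{\mathcal{O}^\lambda}(\Pi^\lambda e_0,-)$ lands in $\proj \Pi(Q_\lambda)$ with $V_i \mapsto e_i \Pi(Q_\lambda)$ for $i \in I_\lambda$ and $V_i \mapsto 0$ otherwise.

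The crux is then a morphism-level calculation. By Corollary \ref{homs}, for any $i, j$,
\[
\Hom_{\mathcal{O}^\lambda}(V_i, V_j) = e_j \Pi^\lambda e_i \qquad \text{and} \qquad V_i^* = \Hom_{\mathcal{O}^\lambda}(V_i, \mathcal{O}^\lambda) = e_0 \Pi^\lambda e_i,
\]
so that
\[
V_j V_i^* \;=\; e_j \Pi^\lambda e_0 \cdot e_0 \Pi^\lambda e_i \;=\; e_j \Pi^\lambda e_0 \Pi^\lambda e_i.
\]
Using the identification $\stabHom_{\mathcal{O}^\lambda}(M,N) = \Hom_{\mathcal{O}^\lambda}(M,N)/NM^*$ recalled in the preliminaries, this yields
\[
\stabHom_{\mathcal{O}^\lambda}(V_i, V_j) \;=\; e_j \bigl( \Pi^\lambda / \Pi^\lambda e_0 \Pi^\lambda \bigr) e_i \;=\; e_j \Pi(Q_\lambda) e_i \;=\; \Hom_{\Pi(Q_\lambda)}\bigl( e_i \Pi(Q_\lambda), e_j \Pi(Q_\lambda) \bigr),
\]
which is precisely the fully-faithfulness statement on vertex modules. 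Propagating by additivity (and using Proposition \ref{addPi}(2) to ensure the $V_i$ with $i \in I_\lambda$ generate $\stabMCM \mathcal{O}^\lambda$) yields fully-faithfulness on all objects. Essential surjectivity is immediate, since every indecomposable projective $\Pi(Q_\lambda)$-module has the form $e_i \Pi(Q_\lambda)$ for some $i \in I_\lambda$, and is the image of $V_i$.

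The only step requiring care is this morphism-level calculation: one needs both the concrete description $\Hom_{\mathcal{O}^\lambda}(V_i, V_j) = e_j \Pi^\lambda e_i$ from Corollary \ref{homs} and the identification $\Pi^\lambda / \Pi^\lambda e_0 \Pi^\lambda \cong \Pi(Q_\lambda)$ from the proof of Lemma \ref{stabendpi}, together with the bookkeeping observation that modding out by $\Pi^\lambda e_0 \Pi^\lambda$ on the target indeed matches modding out by maps factoring through \emph{any} projective (not just free) $\mathcal{O}^\lambda$-module on the source --- which is automatic since every projective is a summand of a free module, and is exactly what the $NM^*$-description encodes. Once these ingredients are assembled, essentially no additional work is required and the equivalence follows formally.
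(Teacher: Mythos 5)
Your proof is correct, and it arrives at the result by a slightly different and more explicit route than the paper. The paper cites a general result of Krause (\cite[Proposition 2.3]{krause}) to deduce that $\stabHom_{\mathcal{O}^\lambda}(\Pi^\lambda e_0,-)$ restricts to a fully faithful functor $\add_{\stabmod} \Pi^\lambda e_0 \to \proj \stabEnd(\Pi^\lambda e_0)$, then computes $\stabHom_{\mathcal{O}^\lambda}(\Pi^\lambda e_0, V_i) = e_i \Pi(Q_\lambda)$ to get essential surjectivity. You instead prove fully-faithfulness by hand: starting from $\Hom_{\mathcal{O}^\lambda}(V_i,V_j) = e_j\Pi^\lambda e_i$ and $V_j V_i^* = e_j \Pi^\lambda e_0 \Pi^\lambda e_i$, you observe that
\[
\stabHom_{\mathcal{O}^\lambda}(V_i,V_j) \;=\; e_j\Pi^\lambda e_i \big/ e_j \Pi^\lambda e_0 \Pi^\lambda e_i \;=\; e_j\bigl(\Pi^\lambda/\Pi^\lambda e_0 \Pi^\lambda\bigr)e_i \;=\; e_j \Pi(Q_\lambda) e_i,
\]
where the middle equality uses $e_j \Pi^\lambda e_0 \Pi^\lambda e_i = e_j\Pi^\lambda e_i \cap \Pi^\lambda e_0 \Pi^\lambda$, which holds because any $x \in e_j\Pi^\lambda e_i$ satisfies $x = e_j x e_i$. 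This gives the isomorphism of Hom spaces directly, and the remaining steps (additivity, essential surjectivity, compatibility with composition) are formal. The advantage of your version is that it is self-contained and makes the isomorphism of stable Hom spaces completely transparent; the cost is that you should be a bit more careful than the phrasing \emph{it is plausible that} --- the argument is in fact exact, since the functor in question is $\stabHom_{\mathcal{O}^\lambda}(\Pi^\lambda e_0,-)$ landing in $\mod \stabEnd_{\mathcal{O}^\lambda}(\Pi^\lambda e_0) = \mod \Pi(Q_\lambda)$ (Lemma \ref{stabendpi}), so there is nothing to verify about where the functor lands beyond the identification of stable Hom spaces you already carry out.
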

\begin{proof}
By \cite[Proposition 2.3]{krause}, the functor
\begin{align*}
\stabHom_{\mathcal{O}^\lambda}(\Pi^\lambda e_0, -): \stabmod \mathcal{O}^\lambda \to \mod \stabEnd_{\mathcal{O}^\lambda}(\Pi^\lambda e_0) = \mod \Pi(Q_\lambda)
\end{align*}
induces a fully faithful $\Bbbk$-linear functor $\add \Pi^\lambda e_0 \to \proj \Pi(Q_\lambda)$, where $\add \Pi^\lambda e_0 = \stabMCM \mathcal{O}^\lambda$ by Proposition \ref{addPi}. Since $\Pi(Q_\lambda)$ is finite-dimensional \cite[Proposition 2.1]{erdmann}, $\mod \Pi(Q_\lambda)$ is Krull-Schmidt and hence so too is $\proj \Pi(Q_\lambda)$. Therefore, to establish essential surjectivity of the functor $\stabHom_{\mathcal{O}^\lambda}(\Pi^\lambda e_0,-)$, it suffices to show that we can hit each indecomposable projective $e_i \Pi(Q_\lambda)$, where $i \in I_\lambda$. Indeed, we have
\begin{align*}
\stabHom_{\mathcal{O}^\lambda}(\Pi^\lambda e_0, V_i) = \frac{e_i \Pi^\lambda}{e_i \Pi^\lambda e_0 \Pi^\lambda} = \frac{e_i \Pi^\lambda}{e_i \Pi^\lambda \cap \Pi^\lambda e_0 \Pi^\lambda} = e_i \gap \frac{ \Pi^\lambda}{\Pi^\lambda e_0 \Pi^\lambda} = e_i \Pi(Q_\lambda),
\end{align*}
and so the functor is also essentially surjective. We therefore have the claimed equivalence.
\end{proof}

It follows that $\stabMCM \mathcal{O}^\lambda$ is nontrivial if and only if $\lambda_i = 0$ for some $i \neq 0$ which, by Lemma \ref{quasidomlem}, happens precisely when $\mathcal{O}^\lambda$ is singular; this is consistent with the more general fact that $\Dsg(R)$ is nontrivial if and only if $R$ is singular. Moreover, the vertex modules $V_i$ with $i=0$, or with $i \neq 0$ and $\lambda_i \neq 0$, are those which are projective and hence vanish in $\stabMCM \mathcal{O}^\lambda$. This is reflected by the fact that these are the vertices which are deleted to obtain $Q_\lambda$. \\
\indent  As an immediate consequence of (the proof of) Theorem \ref{addequiv}, we have the following result:

\begin{cor} \label{krull}
$\stabMCM \mathcal{O}^\lambda$ is a Krull-Schmidt category.
\end{cor}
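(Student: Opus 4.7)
The plan is to deduce this directly from the equivalence of Theorem \ref{addequiv}, since the Krull--Schmidt property is invariant under equivalence of additive categories. Thus it suffices to prove that $\proj \Pi(Q_\lambda)$ is Krull--Schmidt.

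First I would observe that $Q_\lambda$ is a full subquiver of the Dynkin quiver $Q$, and is therefore itself a disjoint union of Dynkin quivers. It is a classical fact of Gelfand--Ponomarev that the preprojective algebra of a Dynkin quiver is finite-dimensional, so $\Pi(Q_\lambda)$ is a finite-dimensional $\Bbbk$-algebra. The trivial paths $e_i$ for $i \in I_\lambda$ form a complete set of orthogonal idempotents, and since each $e_i \Pi(Q_\lambda) e_i$ is a finite-dimensional local $\Bbbk$-algebra (the unique simple quotient corresponding to the vertex $i$), each $e_i$ is a primitive idempotent. Consequently the indecomposable finitely generated projectives are precisely the $e_i \Pi(Q_\lambda)$, each with local endomorphism ring, and every object of $\proj \Pi(Q_\lambda)$ decomposes essentially uniquely as a finite direct sum of these.

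Transporting this decomposition across the equivalence of Theorem \ref{addequiv} yields a Krull--Schmidt decomposition in $\stabMCM \mathcal{O}^\lambda$, whose indecomposable objects are exactly the non-projective vertex modules $V_i$ with $i \in I_\lambda$. There is no real obstacle here; the only step that might merit a line of justification is the finite-dimensionality of $\Pi(Q_\lambda)$, for which one can simply cite the classical description of the preprojective algebra of a Dynkin quiver.
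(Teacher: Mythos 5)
Your proof is correct and follows the same route as the paper: both reduce to showing $\proj \Pi(Q_\lambda)$ is Krull--Schmidt via the $\Bbbk$-linear equivalence of Theorem \ref{addequiv}, and both rest on the finite-dimensionality of $\Pi(Q_\lambda)$. The paper is slightly terser, simply noting that $\mod \Pi(Q_\lambda)$ is Krull--Schmidt because $\Pi(Q_\lambda)$ is a finite-dimensional algebra, whereas you spell out the primitive-idempotent decomposition explicitly, but this is the same argument.
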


\begin{rem} 
By Proposition \ref{addPi}, the objects of $\stabMCM \mathcal{O}^\lambda$ are direct summands of finite direct sums of the non-projective vertex modules. Since these vertex modules are indecomposable and $\stabMCM \mathcal{O}^\lambda$ is Krull-Schmidt, in fact every object of $\stabMCM \mathcal{O}^\lambda$ is isomorphic to a finite direct sum of vertex modules.
\end{rem}

\indent The following two corollaries are then immediate from Theorem \ref{addequiv}:

\begin{cor} \label{addequiv3}
Suppose that $\widetilde{Q}$ is an extended Dynkin quiver and $Q_\lambda = Q^{(1)} \sqcup \dots \sqcup Q^{(r)}$ is a disjoint union of connected quivers $Q^{(i)}$, which are therefore necessarily Dynkin. Then there is a $\Bbbk$-linear equivalence
\begin{gather*}
\stabMCM \mathcal{O}^\lambda \simeq \bigoplus_{i=1}^r \proj \Pi(Q^{(i)}).
\end{gather*}
\end{cor}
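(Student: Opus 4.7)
The plan is to simply chain together the two immediately preceding results. By Theorem \ref{addequiv} we already have a $\Bbbk$-linear equivalence $\stabMCM \mathcal{O}^\lambda \simeq \proj \Pi(Q_\lambda)$, induced by the functor $\stabHom_{\mathcal{O}^\lambda}(\Pi^\lambda e_0, -)$. The remaining task is just to decompose the right-hand side according to the connected components of $Q_\lambda$.

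More precisely, I would first observe that, since $Q_\lambda$ is defined as the full subquiver of the Dynkin quiver $Q$ on the vertices where $\lambda$ vanishes, each connected component $Q^{(i)}$ is itself a connected full subquiver of a Dynkin quiver, hence Dynkin. I would then apply Lemma \ref{projequiv} directly to $Q_\lambda$ to obtain a $\Bbbk$-linear equivalence
\begin{align*}
\proj \Pi(Q_\lambda) \simeq \bigoplus_{i=1}^r \proj \Pi(Q^{(i)}).
\end{align*}
Composing this with the equivalence from Theorem \ref{addequiv} yields the desired equivalence.

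There is essentially no obstacle here: both ingredients have already been proven in the section, and the only thing to check is that the hypotheses of Lemma \ref{projequiv} are satisfied, which is immediate from the statement of the corollary. The proof therefore reduces to a one-line combination of the two previous results, which is presumably why the author has marked it as immediate with \qed inside the displayed equation.
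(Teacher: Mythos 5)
Your proof is correct and matches the paper's intended argument exactly: the paper states that this corollary is immediate from Lemma \ref{projequiv} combined with Theorem \ref{addequiv}. Nothing further is needed.
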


\begin{cor} \label{addequiv2}
Let $\widetilde{Q}$ and $\widetilde{Q}'$ be extended Dynkin quivers (not necessarily of the same type) and let $\lambda$ and $\lambda'$ be quasi-dominant weights for $\widetilde{Q}$ and $\widetilde{Q}'$, respectively. If $Q_\lambda \cong Q'_{\lambda'}$ then there is a $\Bbbk$-linear equivalence 
\begin{gather*}
\pushQED{\qed} 
\stabMCM \mathcal{O}^\lambda(\widetilde{Q}) \simeq \stabMCM \mathcal{O}^{\lambda'}(\widetilde{Q}'). \qedhere
\popQED
\end{gather*}
\end{cor}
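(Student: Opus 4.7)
The plan is to observe that the statement is an immediate consequence of Corollary \ref{addequiv3}, since the right-hand side of the equivalence in that corollary depends only on the Dynkin quivers $Q^{(i)}$ appearing in the decomposition of $Q_\lambda$, and not on the ambient extended Dynkin quiver $\widetilde{Q}$ or on the specific weight $\lambda$. First I would apply Corollary \ref{addequiv3} to the pair $(\widetilde{Q},\lambda)$ to obtain a $\Bbbk$-linear equivalence
\begin{align*}
\stabMCM \mathcal{O}^\lambda(\widetilde{Q}) \simeq \bigoplus_{i=1}^{r} \proj \Pi(Q^{(i)}).
\end{align*}
Then I would apply the same corollary to the pair $(\widetilde{Q}',\lambda')$, using the hypothesis that $Q'_{\lambda'}$ has the same decomposition $Q^{(1)} \sqcup \dots \sqcup Q^{(r)}$, to obtain
\begin{align*}
\stabMCM \mathcal{O}^{\lambda'}(\widetilde{Q}') \simeq \bigoplus_{i=1}^{r} \proj \Pi(Q^{(i)}).
\end{align*}
Composing the first equivalence with the inverse of the second yields the required $\Bbbk$-linear equivalence between $\stabMCM \mathcal{O}^\lambda(\widetilde{Q})$ and $\stabMCM \mathcal{O}^{\lambda'}(\widetilde{Q}')$.

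There is essentially no obstacle in this argument; every ingredient has already been assembled in the preceding results. The only implicit point worth mentioning is that Corollary \ref{addequiv3} is stated under Assumption \ref{asss} (both $\lambda$ and $\lambda'$ are quasi-dominant), which is built into the hypothesis of the present corollary, so its application is legitimate. In particular, no further properties of the deformations $\mathcal{O}^\lambda(\widetilde{Q})$ or $\mathcal{O}^{\lambda'}(\widetilde{Q}')$ beyond those already encoded in $Q_\lambda$ and $Q'_{\lambda'}$ are needed.
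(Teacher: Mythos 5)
Your proposal is correct and matches the paper's intended argument: the paper states that Corollary \ref{addequiv2} is immediate from Lemma \ref{projequiv} (via Corollary \ref{addequiv3}, applied to both $(\widetilde{Q},\lambda)$ and $(\widetilde{Q}',\lambda')$ and composing), which is exactly the route you take. Your additional remark about Assumption \ref{asss} being satisfied is a sensible sanity check but not a new ingredient.
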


It is illustrative to apply Theorem \ref{addequiv} (and its corollaries) to an example:

\begin{example}

Suppose that $\widetilde{Q} = \widetilde{\mathbb{A}}_5$, and consider the deformation $\mathcal{O}^\lambda$ where the weight $\lambda$ is indicated in red on the left hand quiver below:

\begin{figure}[h]
\centering
\begin{tikzpicture}[->,>=stealth,thick,scale=1]

\node (0) at (4*360/6: 1.6cm) {0};
\node (1) at (3*360/6: 1.6cm) [thick] {1};
\node (2) at (2*360/6: 1.6cm) [thick] {2};
\node (3) at (1*360/6: 1.6cm) [thick] {3};
\node (4) at (0*360/6: 1.6cm) [thick] {4};
\node (5) at (-1*360/6: 1.6cm) [thick] {5};
\node[red] at (4*360/6: 2.3cm) [thick] {$\lambda_0$};
\node[red] at (3*360/6: 2.3cm) [thick] {$0$};
\node[red] at (2*360/6: 2.3cm) [thick] {$0$};
\node[red] at (1*360/6: 2.3cm) [thick] {$0$};
\node[red] at (0*360/6: 2.7cm) [thick] {$\lambda_4 \succ 0$};
\node[red] at (5*360/6: 2.3cm) [thick] {$0$};

\draw (0) to (1) ;
\draw (1) [out=4*360/6, in=3*360/6] to (0);
\draw (1) to  (2);
\draw (2) [out=3*360/6, in=2*360/6] to  (1);
\draw (2) to (3);
\draw (3) [out=2*360/6, in=1*360/6] to (2);
\draw (3) to (4);
\draw (4) [out=1*360/6, in=0*360/6] to (3);
\draw (4) to (5);
\draw (5) [out=0*360/6, in=-1*360/6] to (4);
\draw (5) to (0);
\draw (0) [out=-1*360/6, in=-2*360/6] to (5);

\node at (4.4,0) {\scalebox{2}{$\leadsto$}};

\begin{scope}[xshift=247pt]

\node[mygray] (0) at (4*360/6: 1.6cm) {0};
\node (1) at (3*360/6: 1.6cm) [thick] {1};
\node (2) at (2*360/6: 1.6cm) [thick] {2};
\node (3) at (1*360/6: 1.6cm) [thick] {3};
\node[mygray] (4) at (0*360/6: 1.6cm) [thick] {4};
\node (5) at (-1*360/6: 1.6cm) [thick] {5};

\draw[mygray] (0) to (1) ;
\draw[mygray] (1) [out=4*360/6, in=3*360/6] to (0);
\draw (1) to  (2);
\draw (2) [out=3*360/6, in=2*360/6] to  (1);
\draw (2) to (3);
\draw (3) [out=2*360/6, in=1*360/6] to (2);
\draw[mygray] (3) to (4);
\draw[mygray] (4) [out=1*360/6, in=0*360/6] to (3);
\draw[mygray] (4) to (5);
\draw[mygray] (5) [out=0*360/6, in=-1*360/6] to (4);
\draw[mygray] (5) to (0);
\draw[mygray] (0) [out=-1*360/6, in=-2*360/6] to (5);

\end{scope}

\end{tikzpicture}
\end{figure}

\noindent By Theorem \ref{addequiv3}, there is a $\Bbbk$-linear equivalence $\stabMCM \mathcal{O}^\lambda \simeq \proj \Pi(\mathbb{A}_3) \oplus \proj \Pi(\mathbb{A}_1)$. In more suggestive notation, we can write this equivalence as
\begin{align*}
\Dsg (\mathcal{O}^\lambda) \simeq \Dsg(R_{\mathbb{A}_3}) \oplus \Dsg(R_{\mathbb{A}_1}),
\end{align*}
and so it is sensible to consider $\mathcal{O}^\lambda$ as having an $\mathbb{A}_3$ singularity and an $\mathbb{A}_1$ singularity. \\
\indent If we more concretely set $\lambda = (-1,0,0,0,1,0)$ (respectively, $\lambda = (0,0,0,0,1,0)$) then $\mathcal{O}^\lambda$ is commutative (respectively, noncommutative). In this case we can use (\ref{PresForO}) to write down a presentation for $\mathcal{O}^\lambda$. In particular, we have a $\Bbbk$-linear equivalence
\begin{align*}
\Dsg \frac{\Bbbk[x,y,z]}{\langle xy - z^4(z+1)^2 \rangle} \simeq
\Dsg
\frac{\Bbbk \langle x,y,z \rangle}{\left \langle
\begin{array}{cc}
{xz = (z+1)x}, & {xy = z^4(z+1)^2} \\
{yz = (z-1)y}, & {yx = (z-1)^4 z^2}
\end{array}
\right \rangle}.
\end{align*}
\end{example}


\section{The singularity category of $\mathcal{O}^\lambda(\widetilde{Q})$ as a triangulated category} \label{SingChapterTriSec}

We are now able to complete the proof of Theorem \ref{introthm0} which gives a complete description of the singularity category of $\mathcal{O}^\lambda(\widetilde{Q})$. To do this, we need to show that the induced triangulated structures on the right hand sides of the $\Bbbk$-linear equivalences
\begin{gather}
\Dsg( \mathcal{O}^\lambda(\widetilde{Q})) \simeq \stabMCM \mathcal{O}^\lambda(\widetilde{Q})) \simeq \bigoplus_{i=1}^r \proj \Pi(Q^{(i)}), \label{equivalences1} \\
\bigoplus_{i=1}^{r} \Dsg (R_{Q^{(i)}}) \simeq \bigoplus_{i=1}^r \proj \Pi(Q^{(i)}) \label{equivalences 2}
\end{gather}
are the same. We achieve this in by showing that the summands $\proj \Pi(Q^{(i)})$ in (\ref{equivalences1}) are in fact triangulated subcategories of $\bigoplus_{i=1}^r \proj \Pi(Q^{(i)})$, and that the triangulated structure on each $\proj \Pi(Q^{(i)})$ is essentially unique. To establish the latter of these, we use the following result:

\begin{thm}[{\cite[Corollary 2]{keller18}}] \label{amiotthm}
Let $\mathcal{T}$ and $\mathcal{T}'$ be Krull-Schmidt $\Bbbk$-linear triangulated categories which are finite, connected, algebraic and standard. If $\mathcal{T}$ and $\mathcal{T}'$ are equivalent as $\Bbbk$-linear categories, then they are in fact equivalent as triangulated categories.
\end{thm}

We note that, if $Q$ is Dynkin, $\proj \Pi(Q)$ (and the $\Bbbk$-linearly equivalent category $\Dsg(R_Q)$) are finite, connected, and standard since they are $\Bbbk$-linearly equivalent to certain orbit categories which are known to have these properties (see \cite[Remark 5.9]{ari}). Therefore, if we can show that each $\proj \Pi(Q^{(i)})$ is an algebraic triangulated subcategory under the $\Bbbk$-linear equivalence (\ref{equivalences1}), then each $\Bbbk$-linear equivalence $\proj \Pi(Q^{(i)}) \simeq \Dsg(R_{Q^{(i)}})$ is in fact a triangle equivalence, which will prove Theorem \ref{introthm0} from the introduction. \\
\indent We must first show that the translation functor $\Sigma$ induced on the right hand side of (\ref{equivalences1}) \emph{preserves connected components}, in the sense that it restricts to an autoequivalence of each of the subcategories $\proj \Pi(Q^{(i)})$. Writing $P_i$ for the indecomposable projective $\Pi(Q_\lambda)$-module corresponding to vertex $i$ in $Q_\lambda$, it follows from the Krull-Schmidt property of $\proj \Pi(Q_\lambda)$ that $\Sigma$ permutes the $P_i$. We write $\sigma$ for the induced permutation of the vertices. This allows us to make the following observation:

\begin{lem} \label{graphautolem}
With the above setup, $\sigma$ is a graph automorphism of $Q_\lambda$.
\end{lem}
\begin{proof}
First note that the spaces $\Hom_{\Pi(Q_\lambda)}(P_i, P_j) = e_j \Pi(Q_\lambda) e_i$ can be graded by path length, and that vertex $i$ and vertex $j$ are adjacent in $Q_\lambda$ if and only if there is a degree 1 morphism in $\Hom_{\Pi(Q_\lambda)}(P_i, P_j)$. Applying $\Sigma$, this is equivalent to $\Hom_{\Pi(Q_\lambda)}(P_{\sigma(i)}, P_{\sigma(j)})$ containing a degree 1 morphism, which happens if and only if $\sigma(i)$ and $\sigma(i)$ are adjacent in $Q_\lambda$. That is, $\sigma$ is a graph automorphism of $Q_\lambda$.
\end{proof}

If the $Q^{(i)}$ are pairwise non-isomorphic, then the fact that the induced translation functor has to be a graph automorphism forces it to preserve connected components, as required. This leaves only the cases where some of the $Q^{(i)}$ are isomorphic, and one might hope to abstractly prove that the translation functor preserves connected components. Unfortunately, the following example shows that one should not expect this to be the case.

\begin{example}
Let $T$ be a Krull-Schmidt $\Bbbk$-linear category with only two indecomposable objects $U$ and $V$, and suppose these objects satisfy $\Hom_T(U,V) = 0 = \Hom_T(V,U)$ and $\End_T(U) = \Bbbk = \End_T(V)$. For example, this is the case for $\stabMCM \mathcal{O}^\lambda(\widetilde{\mathbb{A}}_3)$ when $(\lambda_0,\lambda_1,\lambda_2,\lambda_3) = (0,0,1,0)$, since in this case it is $\Bbbk$-linearly equivalent to $\proj \Pi(\mathbb{A}_1) \oplus \proj \Pi(\mathbb{A}_1)$.  This category has two possible triangulated structures: the first has $\Sigma = \id$, and the distinguished triangles are isomorphic to direct sums and rotations of
\begin{align*}
U \xrightarrow{\id} U \to 0 \to U \quad \text{and} \quad V \xrightarrow{\id} V \to 0 \to V,
\end{align*}
and the second option has $\Sigma \gap U = V$ and $\Sigma \gap V = U$, and the distinguished triangles are isomorphic to direct sums and rotations of 
\begin{align*}
U \xrightarrow{\id} U \to 0 \to V.
\end{align*}
The first example decomposes into a direct sum of two triangulated subcategories, while the second example does not.
\end{example}

\indent While the above example shows that one should not expect to be able to abstractly prove that the translation functor preserves connected components, this is essentially the only counterexample. The following proof is due to Jeremy Rickard, and we thank him for allowing us to reproduce it:

\begin{lem} \label{onlycounterexample}
Suppose that $\mathcal{T}$ is a Krull-Schmidt $\Bbbk$-linear triangulated category with finitely many indecomposables which decomposes as a $\Bbbk$-linear category as
\begin{align*}
\mathcal{T} = \bigoplus_{i=1}^n \mathcal{T}_i.
\end{align*}
Suppose that the translation functor $\Sigma$ satisfies $\Sigma \gap \mathcal{T}_i = \mathcal{T}_j$ for some $i \neq j$. Then $\mathcal{T}_i$ and $\mathcal{T}_j$ each have only one isoclass of indecomposable objects.
\end{lem}
\begin{proof}
Let $\alpha: X \to Y$ be a nonzero morphism between two indecomposable objects of $\mathcal{T}_i$, and complete to a triangle
\begin{align*}
X \xrightarrow{\alpha} Y \xrightarrow{\beta} Z \xrightarrow{\gamma} \Sigma \gap X,
\end{align*}
where $\Sigma \gap X \in \mathcal{T}_j$ by assumption. We claim that every indecomposable summand of $Z$ lies in $\mathcal{T}_j$. To this end, suppose that $Z = Z' \oplus Z''$ where $Z' \in \mathcal{T}_j$ and $Z'' \in \bigoplus_{k \neq j} \mathcal{T}_k$, and write $\gamma = (\gamma',0)$. The map $\gamma'$ gives rise to a triangle $Z' \xrightarrow{\gamma'} \Sigma \gap X \to Y' \to \Sigma \gap Z'$ and rotating yields the triangle $X \to \Sigma^{-1} Y' \to Z' \xrightarrow{\gamma'} \Sigma \gap X$. The direct sum of this triangle with the triangle $0 \to Z'' \to Z'' \to 0$ is a triangle isomorphic to the triangle $X \to Y \to Z \to \Sigma \gap X$, and so $Y \cong \Sigma^{-1} Y' \oplus Z''$. By indecomposability of $Y$, we therefore have $\Sigma^{-1} Y' = 0$ or $Z'' = 0$. If $\Sigma^{-1} Y' = 0$ then $Y \cong Z''$ and $\Sigma \gap X \cong Z'$. Our original triangle becomes
\begin{align*}
X \xrightarrow{\alpha} Z'' \to \Sigma X \oplus Z'' \to \Sigma X
\end{align*}
which is isomorphic to the direct sum of the triangles $X \to 0 \to \Sigma X \to \Sigma X$ and $0 \to Z'' \to Z'' \to 0$. This means that $\alpha$ is the zero map, contrary to our assumption, and so we must have $Z'' = 0$, establishing the claim. Now, since every indecomposable summand of $Z$ lies in $\mathcal{T}_j$, $\beta$ is the zero map. Applying $\Hom(Y,-)$, we get an exact sequence
\begin{align*}
\Hom(Y,X) \xrightarrow{\alpha \circ -} \Hom(Y,Y) \to \Hom(Y,Z)
\end{align*}
where the last term is $0$. By exactness, there exists $\alpha' : Y \to X$ with $\alpha \alpha' = \id_Y$. Since $\mathcal{T}$ is Krull-Schmidt the endomorphism ring of $X$ is local, which implies that the idempotent map $\alpha' \alpha$ is a unit and therefore equal to $\id_X$. Therefore $\alpha : X \to Y$ is an isomorphism, and so $\mathcal{T}_i$ (and hence $\mathcal{T}_j$) has only one indecomposable object, up to isomorphism. 
\end{proof}

\indent Therefore, to show that the induced translation functor on $\bigoplus_{i=1}^r \proj \Pi(Q^{(i)})$ from the second $\Bbbk$-linear equivalence in (\ref{equivalences1}) preserves connected components, we only need to consider the case when there exist $Q^{(i)}$ and $Q^{(j)}$, $i \neq j$, with $Q^{(i)} = \mathbb{A}_1 = Q^{(j)}$. It suffices to show that, for the corresponding objects $V_i, V_j \in \stabMCM \mathcal{O}^\lambda(\widetilde{Q})$, we have $\Sigma V_i = V_i$ and $\Sigma V_j = V_j$. To this end, we first have the following result:

\begin{prop} \label{obvexactsequences}
Let $Q$ be a non-Dynkin quiver with no oriented cycles, and with vertices labelled $\{0, 1, \dots n \}$. Write $\Pi(Q)$ for the preprojective algebra of $Q$, and write $V_i = e_i \Pi(Q) e_0$, which is a right $e_0 \Pi(Q) e_0$-module. Then, for any $i \neq 0$, there exists a short exact sequence of $e_0 \Pi(Q) e_0$-modules
\begin{align*}
0 \to V_i \to \bigoplus_{j \in \partial i} V_j \to V_i \to 0,
\end{align*}
where $\partial i$ is the set of vertices adjacent to $i$ in $Q$.
\end{prop}
\begin{proof}
By \cite[Proposition 4.2]{brenner}, there is an exact sequence of $\Pi(Q)$-modules
\begin{align}
0 \to e_i \Pi(Q) \to \bigoplus_{j \in \partial i} e_j \Pi(Q) \to e_i \Pi(Q) \to S_i \to 0, \label{bremnerexact}
\end{align}
where $S_i$ is the simple module at vertex $i$. Noting that $e_0 \Pi(Q)$ is a direct summand of $\Pi(Q)$ and hence projective, applying $\Hom_{\Pi(Q)}(e_0 \Pi(Q),-)$ yields an exact sequence
\begin{align*}
0 &\to \Hom_{\Pi(Q)}(e_0 \Pi(Q),e_i \Pi(Q)) \to \bigoplus_{j \in \partial i} \Hom_{\Pi(Q)}(e_0 \Pi(Q),e_j \Pi(Q)) \\ 
&\to \Hom_{\Pi(Q)}(e_0 \Pi(Q),e_i \Pi(Q)) \to \Hom_{\Pi(Q)}(e_0 \Pi(Q),S_i) \to 0.
\end{align*}
We also have $\Hom_{\Pi(Q)}(e_0 \Pi(Q),e_k \Pi(Q)) = V_k$ and, since $i \neq 0$, $\Hom_{\Pi(Q)}(e_0 \Pi(Q),S_i) = 0$. We therefore have exactness of
\begin{align*}
0 \to V_i \to \bigoplus_{j \in \partial i} V_j \to V_i \to 0,
\end{align*}
as claimed.
\end{proof}

In particular this result holds for extended Dynkin quivers, where we remark that if we wish to apply it to an $\widetilde{\mathbb{A}}_n$ quiver then we must orient the arrows so that there are no oriented cycles; this does not change the isomorphism class of $\Pi(\widetilde{\mathbb{A}}_n)$ \cite[Lemma 2.2]{cbh}.

\begin{rem}
The above result may or may not fail for Dynkin quivers, depending on how the vertices are labelled. For example, when $Q = \mathbb{A}_3$ where the vertices are labelled as follows,

\begin{figure}[h]
\centering
\begin{tikzpicture}[-,thick,scale=1]

\node (0) at (-3,0) {$0$};
\node (1) at (-1.5,0) {$1$};
\node (2) at (0,0) {$2$};

\draw (0) to (1) to (2);

\end{tikzpicture}
\end{figure}

\noindent then the complexes of interest to us are 
\begin{align*}
0 \to V_1 \to V_0 \oplus V_2 \to V_1 \to 0, \quad \text{and} \quad 0 \to V_2 \to V_1 \to V_2 \to 0.
\end{align*}
\noindent Since $\dim_\Bbbk V_0 = 1$, $\dim_\Bbbk V_1 = 1$, and $\dim_\Bbbk V_2 =1$, the first of these is exact while the second is not. If instead we label the vertices of $Q$ as follows,

\begin{figure}[h]
\centering
\begin{tikzpicture}[-,thick,scale=1]

\node (1) at (-3,0) {$1$};
\node (0) at (-1.5,0) {$0$};
\node (2) at (0,0) {$2$};

\draw (1) to (0) to (2);

\end{tikzpicture}
\end{figure}

\noindent then the complexes of interest to us are
\begin{align*}
0 \to V_1 \to V_0  \to V_1 \to 0, \quad \text{and} \quad 0 \to V_2 \to V_0 \to V_2 \to 0,
\end{align*}
and both of these are exact since $\dim_\Bbbk V_0 = 2$, $\dim_\Bbbk V_1 = 1$, and $\dim_\Bbbk V_2 =1$.
\end{rem}

We now use Proposition \ref{obvexactsequences} to show that the induced translation functor on $\bigoplus_{i=1}^r \proj \Pi(Q^{(i)})$ preserves connected components.

\begin{prop} \label{connectedcomponents}
Let $\widetilde{Q}$ be an extended Dynkin quiver and $\lambda$ be a quasi-dominant weight for $\widetilde{Q}$. Write $Q_\lambda = Q^{(1)} \sqcup \dots \sqcup Q^{(r)}$ as a disjoint union of connected quivers $Q^{(i)}$, which are therefore necessarily Dynkin. Consider the triangulated structure on $\bigoplus_{i=1}^r \proj \Pi(Q^{(i)})$ induced by the $\Bbbk$-linear equivalence
\begin{gather*}
\stabMCM \mathcal{O}^\lambda \simeq \bigoplus_{i=1}^r \proj \Pi(Q^{(i)})
\end{gather*}
of Corollary \ref{addequiv3}, and let $\Sigma$ be the translation functor. Then each $\proj \Pi(Q^{(i)})$ is invariant under $\Sigma$.
\end{prop}
\begin{proof}
By Lemma \ref{onlycounterexample} and the discussion following it, the only situation in which there exist $\proj \Pi(Q^{(i)})$ which are not necessarily invariant under $\Sigma$ is when we have multiple $Q^{(i)}$ equal to $\mathbb{A}_1$. Working in $\stabMCM \mathcal{O}^\lambda(\widetilde{Q})$, this happens if and only if there is some vertex $i$ with $\lambda_i = 0$, and if $j$ is adjacent to $i$ then either $j = 0$ or $\lambda_j \neq 0$; in particular, the modules $V_j$ corresponding to these vertices are projective as $\mathcal{O}^\lambda(\widetilde{Q})$-modules by Lemma \ref{isproj}. By Proposition \ref{obvexactsequences}, we have an exact sequence of $e_0 \Pi(\widetilde{Q}) e_0$-modules
\begin{align}
0 \to V_i \xrightarrow{\phi} \bigoplus_{j \in \partial i} V_j \xrightarrow{\psi} V_i \to 0. \label{obvses}
\end{align}
Now consider (\ref{obvses}) as a sequence of modules over $\mathcal{O}^\lambda(\widetilde{Q})$. It is a complex since the composition $\psi \phi$ is equal to the (undeformed) preprojective relation at vertex $i$, which is equal to $\lambda_i e_i = 0$. Filtering $\Pi(\widetilde{Q})$ and $\mathcal{O}^\lambda(\widetilde{Q})$ by path length we obtain a sequence of associated graded modules, which is in fact the exact sequence (\ref{obvses}). It is standard (see \cite[Proposition 7.6.14]{mandr}) that this implies that (\ref{obvses}) is exact as a sequence of modules over $\mathcal{O}^\lambda(\widetilde{Q})$. To summarise, we have an exact sequence of $\mathcal{O}^\lambda(\widetilde{Q})$-modules
\begin{align*}
0 \to V_i \to \bigoplus_{j \in \partial i} V_j \to V_i \to 0
\end{align*}
whose middle term is projective. It follows from the definition of the translation functor that $\Sigma \gap V_i = V_i$ in $\stabMCM \mathcal{O}^\lambda(\widetilde{Q})$. Thus each $\proj \Pi(Q^{(i)})$ is invariant under the induced translation functor on $\bigoplus_{i=1}^r \proj \Pi(Q^{(i)})$.
\end{proof}

We now seek to prove Theorem \ref{introthm0}. Retaining all of the above notation, for each $1 \leqslant i \leqslant r$, define 
\begin{align*}
\mathcal{W}_i \coloneqq \{ V_j \mid j \in Q_0^{(i)} \}, \quad \mathcal{C}_i \coloneqq \add \bigg(V_0 \oplus \bigoplus_{j \in Q_0^{(i)}} V_j \bigg) \quad \text{and} \quad \mathcal{T}_i \coloneqq \add \bigg( \bigoplus_{j \in Q_0^{(i)}} V_j \bigg),
\end{align*}
where the latter two are viewed as subcategories of $\MCM \mathcal{O}^\lambda$ and $\stabMCM \mathcal{O}^\lambda$, respectively.  It will also be convenient to write 
\begin{align*}
M_i = \bigoplus_{j \in (Q_\lambda)_0 \setminus Q_0^{(i)}} V_j,
\end{align*}
and to set
\begin{align*}
\mathcal{W}_i^{\gap \text{c}} \coloneqq \{ V_j \mid j \in (Q_\lambda)_0 \setminus Q_0^{(i)} \}, \quad \mathcal{C}_i^{\hspace{1pt}\text{c}} \coloneqq \add (V_0 \oplus M_i ) \quad \text{and} \quad \mathcal{T}_i^{\hspace{1pt}\text{c}} \coloneqq \add M_i.
\end{align*}
\indent Observe that we can decompose $\stabMCM \mathcal{O}^\lambda$ as
\begin{align*}
\stabMCM \mathcal{O}^\lambda = \bigoplus_{i=1}^r \mathcal{T}_i
\end{align*}
as $\Bbbk$-linear categories. We wish to show that this is also a decomposition into triangulated subcategories. To do this, we first prove a result which shows that the $\mathcal{C}_i$ are Frobenius subcategories of the Frobenius category $\MCM \mathcal{O}^\lambda$. We call a subcategory $\mathcal{B}$ of an exact category $\mathcal{A}$ \emph{extension-closed} if whenever we have a conflation $X \to Y \to Z$ with $X, Z \in \mathcal{B}$ then necessarily $Y \in \mathcal{B}$. Furthermore, an extension-closed subcategory $\mathcal{B}$ is called \emph{admissible} provided that every $B \in \mathcal{B}$ fits into conflations $B \to P \to B'$ and $B'' \to Q \to B$ with $B',B'' \in \mathcal{B}$ and where $P, Q$ are projective in $\mathcal{A}$. We remark that an admissible subcategory of a Frobenius category is itself Frobenius; see \cite[\S 2]{frob}.

\begin{lem} \label{frobsubcat}
For each $i$, the subcategory $\mathcal{C}_i$ satisfies the following property: if $X \to Y \to Z$ is a conflation in $\MCM \mathcal{O}^\lambda$ such that two of the three objects are in $\mathcal{C}_i$, then the third object is also in $\mathcal{C}_i$. Consequently, $\mathcal{C}_i$ is a Frobenius subcategory of $\MCM \mathcal{O}^\lambda$.
\end{lem}
\begin{proof}
We only show that if $X \to Y \to Z$ is a conflation with $X,Y \in \mathcal{C}_i$ then $Z \in \mathcal{C}_i$, with the other cases being similar. So suppose that we have such a conflation. Since $\stabMCM \mathcal{O}^\lambda \simeq \proj \Pi(Q_\lambda)$ and this category is Krull-Schmidt, we have $Z \oplus P \cong U \oplus U' \oplus Q$ in $\MCM \mathcal{O}^\lambda$, where $U \in \mathcal{W}_i$, $U' \in \mathcal{W}_i^{\gap \text{c}}$, and $P,Q$ are projective. This conflation gives rise to a triangle $X \to Y \to Z \to \Sigma X$ in $\stabMCM \mathcal{O}^\lambda$, and applying the functor $\stabHom_{\mathcal{O}^\lambda}(M_i,-)$ yields an exact sequence
\begin{align*}
\stabHom_{\mathcal{O}^\lambda}(M_i,Y) \to \stabHom_{\mathcal{O}^\lambda}(M_i,Z) \to \stabHom_{\mathcal{O}^\lambda}(M_i,\Sigma X).
\end{align*}
Now $\Sigma X \in \mathcal{C}_i$ by Proposition \ref{connectedcomponents} and $Y \in \mathcal{C}_i$ by definition, while $M_i \in \mathcal{C}_i^{\hspace{1pt}\text{c}}$, so both of the flanking terms are $0$. This implies that the middle term, which is equal to $\stabHom_{\mathcal{O}^\lambda}(M_i,U')$, is also $0$. But this means that $U' = 0$, and hence $Z \oplus P \in \mathcal{C}_i$. Since, by definition, $\mathcal{C}_i$ is closed under direct summands, it follows that $Z \in \mathcal{C}_i$ as required. \\
\indent For the final claim, first notice that the above paragraph tells us that $\mathcal{C}_i$ is extension-closed. Moreover, given an object $C \in \mathcal{C}_i$, since $\MCM \mathcal{O}^\lambda$ is Frobenius we can always find conflations $C \to P \to Z$ and $X \to Q \to C$ with $X,Z \in \MCM \mathcal{O}^\lambda$ and $P,Q$ projective. Since projective $\mathcal{O}^\lambda$-modules are direct summands of sums of copies of $\mathcal{O}^\lambda$, we have $P,Q \in \mathcal{C}_i$ by definition, and then the previous paragraph tells us that $X,Z \in \mathcal{C}_i$. Therefore $\mathcal{C}_i$ is admissible and hence Frobenius.
\end{proof}

\indent This allows us to prove our main theorem:

\begin{thm} \label{triequiv}
Let $\widetilde{Q}$ and $Q_\lambda$ be as in Corollary \ref{addequiv3}. Then the $\Bbbk$-linear equivalence
\begin{align*}
\stabMCM \mathcal{O}^\lambda \simeq \bigoplus_{i=1}^r \proj \Pi(Q^{(i)}),
\end{align*}
of Corollary \ref{addequiv3} is a triangle equivalence, where the right hand side is a decomposition into triangulated subcategories satisfying $\proj \Pi(Q^{(i)}) \simeq \Dsg(R_{Q^{(i)}})$.
\end{thm}
\begin{proof}
By Lemma \ref{frobsubcat}, we know that $\mathcal{C}_i$ is a Frobenius subcategory of $\MCM \mathcal{O}^\lambda$. Using \cite[Theorem 3.15 (2)]{arentz}, it follows that $\mathcal{T}_i$ is equal to the stable category of the Frobenius category $\mathcal{C}_i$ for $1 \leqslant i \leqslant r$, and so the decomposition
\begin{align*}
\stabMCM \mathcal{O}^\lambda = \bigoplus_{i=1}^r \mathcal{T}_i
\end{align*}
is in fact a decomposition into triangulated subcategories. If we set $e^{(i)} = \sum_{j \in Q^{(i)}_0} e_j$, \cite[Proposition 2.3]{krause} implies that the functor
\begin{align*}
\stabHom_{\mathcal{O}^\lambda}(e^{(i)}\Pi^\lambda e_0,-) : \stabmod \mathcal{O}^\lambda \to \mod \stabEnd_{\mathcal{O}^\lambda}(e^{(i)}\Pi^\lambda e_0)
\end{align*}
restricts to a $\Bbbk$-linear equivalence $\mathcal{T}_i \simeq \proj \Pi(Q^{(i)})$. This equivalence also induces an algebraic triangulated structure on $\proj \Pi(Q^{(i)})$. Since this category is $\Bbbk$-linearly equivalent to $\Dsg(R_{Q^{(i)}})$, Theorem \ref{amiotthm} implies that they are triangle equivalent, completing the proof.
\end{proof}

\begin{rem}
In the first version of this paper, \cite{crawford}, a much longer argument was used to establish Theorem \ref{triequiv}. The original proof made use of the so-called knitting algorithm from \cite{iyamawemyss} to construct short exact sequences of $\mathcal{O}^\lambda$-modules, and explicitly listed many such sequences. The techniques used and the exact sequences given may be of independent interest.
\end{rem}


\section{A noncommutative geometric McKay correspondence} \label{inttheorysec}
We now look at a generalisation of the intersection theory of the minimal resolution of a Kleinian singularity $\Spec R_Q$ to a noncommutative setting. We begin by recalling the result in the commutative setting, whereby it is often referred to as the geometric McKay correspondence. \\
\indent Let $R_Q$ be a Kleinian singularity with corresponding extended Dynkin quiver $\widetilde{Q}$. Then the affine variety $\Spec R_Q$ is an isolated surface singularity which has a unique minimal resolution. It is well-known (see, for example, \cite[\S 6.4]{leu}) that the exceptional fibre of this minimal resolution is a union of $n$ irreducible curves $\gamma_i$, where $n$ is the number of vertices of $Q$. Moreover, each $\gamma_i$ is isomorphic to $\mathbb{P}^1$ and has self-intersection $-2$, and $\gamma_i \cap \gamma_j$ is either empty or a point. In fact, the dual graph of the exceptional fibre is given by the underlying graph of $Q$. Let $\Gamma$ be the $n \times n$ matrix with entries
\begin{gather*}
\Gamma_{ij} =
\left\{ 
\begin{array}{cl}
-2 & \text{if } i=j, \\
1 & \text{if } \gamma_i \text{ and } \gamma_j \text{ intersect,} \\
0 & \text{otherwise,}
\end{array} \right.
\end{gather*}
so that $\Gamma_{ij}$ is equal to the intersection multiplicity $\gamma_i \bullet \gamma_j$. With an appropriate labelling of the curves $\gamma_i$, we have $\Gamma = -C$ where $C$ is the Cartan matrix corresponding to the Dynkin type of $Q$; explicitly, $C = 2I-A$, where $A$ is the adjacency matrix of the underlying graph of $Q$. \\
\indent Now let $\widetilde{Q}$ be an extended Dynkin quiver with $n+1$ vertices, let $Q$ be the quiver obtained by removing the extending vertex, and let $\lambda = \varepsilon_0 = (1,0,\dots,0)$; that is, the weight at the extending vertex is $1$, and $0$ for all of the other vertices. We may then consider $\mathcal{O}^\lambda(\widetilde{Q})$ to be a noncommutative analogue of $R_Q$, the coordinate ring of the corresponding Kleinian singularity; indeed, these rings have equivalent singularity categories by Theorem \ref{introthm0}. We now seek to generalise the geometric McKay correspondence to $\mathcal{O}^\lambda(\widetilde{Q})$ for this particular choice of $\lambda$. To do so, we need an appropriate analogue of a resolution of the singular ring $\mathcal{O}^\lambda(\widetilde{Q})$; we use the definition given in \cite{qin}, where such a resolution is a noncommutative ring satisfying certain properties which we will recall below. The role of the exceptional curves in the resolution will be played by finite-dimensional simple modules, of which our resolution of $\mathcal{O}^\lambda(\widetilde{Q})$ has finitely many, and the intersection multiplicity of any two such modules is provided by \cite{mori}. In this section, we will prove the following: 

\begin{thm}[Theorem \ref{inttheory2}] \label{introthm2}
Let $\widetilde{Q}$ be an extended Dynkin quiver with corresponding Dynkin quiver $Q$ and let $\lambda = \varepsilon_0$. Then $\mathcal{O}^\lambda(\widetilde{Q})$ has a noncommutative resolution (which in fact can take the form $\mathcal{O}^\mu(\widetilde{Q})$ for some weight $\mu$ and so is a deformation), and the exceptional objects (the finite-dimensional simple modules) in this resolution may be indexed so that the corresponding intersection matrix is $-C$, where $C$ is the Cartan matrix corresponding to $Q$.
\end{thm}

\subsection{Noncommutative quasi-crepant resolutions}
We now give a precise definition of the noncommutative resolutions which appear in the above theorem, which is taken from \cite{qin}, and we prove a useful general result. We first recall a definition:

\begin{defn}
Let $A$ be a $\Bbbk$-algebra and $X,Y \in \Mod A$. We say that $X$ and $Y$ are \emph{$n$-isomorphic}, written $X \cong_{n} Y$, if there exists a third module $Z$ and homomorphisms
\begin{align*}
\phi : Z \to X \quad \text{and} \quad \psi : Z \to Y
\end{align*}
such that the kernels and cokernels of $\phi$ and $\psi$ each have GK dimension at most $n$.
\end{defn}

We can now give the precise definition of a \emph{noncommutative quasi-crepant resolution} from \cite[Definition 3.15]{qin}:

\begin{defn}
Let $A$ be a noetherian Auslander-Gorenstein $\Bbbk$-algebra with $\GKdim(A) = d$. Then a \emph{noncommutative quasi-crepant resolution} (NQCR) of $A$ is a triple $(B,M,N)$ where $B$ is a noetherian Auslander regular, Cohen-Macaulay $\Bbbk$-algebra with $\GKdim(B) = d$ and where ${}_B M_A$ and ${}_A N_{B}$ are finitely generated bimodules which are reflexive on both sides and which satisfy 
\begin{align*}
M \otimes_A N \cong_{d-2} B \quad \text{and} \quad N \otimes_B M \cong_{d-2} A.
\end{align*}
\end{defn}

NQCRs have the following useful property, which is not proven in \cite{qin} but provides a useful complement to \cite[Theorem 0.6]{qin}. We remark that the following statement remains true if one replaces ``noncommutative quasi-crepant resolution'' by ``noncommutative quasi-resolution'' (see \cite[Definition 3.2]{qin}), the latter notion being slightly weaker.

\begin{lem} \label{moritainvariant}
If $B$ is a noncommutative quasi-crepant resolution of $A$ and $C$ is Morita equivalent to $B$, then $C$ is also a noncommutative quasi-crepant resolution of $A$.
\end{lem}
\begin{proof}
Let $d = \GKdim A$. Since $B$ is a NQCR of $A$, it is Auslander regular and Cohen-Macaulay, has GK dimension $d$, and there exist bimodules ${}_B M_A$ and ${}_A N_{B}$ which are finitely generated and reflexive on both sides and which satisfy 
\begin{align*}
M \otimes_A N \cong_{d-2} B \quad \text{and} \quad N \otimes_B M \cong_{d-2} A.
\end{align*}
Moreover, since $C$ is Morita equivalent to $B$, there exists a progenerator $P \in \mod B$ with $C \cong \End(P_B)$, and we may view $P$ as a $(C,B)$-bimodule. 
\\
\indent First note that $C$ has GK dimension $d$ by standard Morita theory, and by applying \cite[Propositon 4.3]{yek}, we deduce that $C$ is also Auslander regular and Cohen-Macaulay. Writing ${}_B Q_C = \Hom({}_C P, C) \cong \Hom(P_B,B)$, define two bimodules ${}_C \widetilde{M}_A$ and ${}_A \widetilde{N}_{C}$ as follows:
\begin{align*}
\widetilde{M} = P \otimes_B M \quad \text{and} \quad \widetilde{N} = N \otimes_B Q.
\end{align*}

Since we have pairs of mutually inverse equivalences
\begin{gather*}
- \otimes_C Q : \mod B \to \mod C, \qquad - \otimes_B P : \mod C \to \mod B, \\
P \otimes_B - : B\text{-mod} \to C\text{-mod}, \qquad Q \otimes_C - : C\text{-mod} \to B\text{-mod},
\end{gather*}
and since $M$ and $N$ are reflexive on both sides, it follows that $\widetilde{M}$ and $\widetilde{N}$ are reflexive on both sides. \\
\indent It remains to show that $\widetilde{M} \otimes_A \widetilde{N} \cong_{d-2} C$ and $\widetilde{N} \otimes_C \widetilde{M} \cong_{d-2} A$. The latter of these is immediate, since
\begin{align*}
\widetilde{N} \otimes_C \widetilde{M} = N \otimes_B Q \otimes_C P \otimes_B M \cong N \otimes_B M \cong_{d-2} A,
\end{align*}
where the first isomorphism follows from \cite[18.17 Proposition]{lam} and the $(d-2)$-isomorphism follows since $(B,M,N)$ is a NQCR of $A$. We now wish to show that $\widetilde{M} \otimes_A \widetilde{N} \cong_{d-2} C$. Since $M \otimes_A N \cong_{d-2} B$, there exists a bimodule ${}_B Z_B$ and morphisms
\begin{align*}
\phi : Z \to M \otimes_A N \quad \text{and} \quad \psi : Z \to B
\end{align*}
such that the kernels and cokernels of $\phi$ and $\psi$ have GK dimension at most $d-2$. Define $\widetilde{Z} = P \otimes_B Z \otimes_B Q$ and morphisms
\begin{gather*}
\widetilde{\phi} : \widetilde{Z} \to \widetilde{M} \otimes_A \widetilde{N}, \qquad \widetilde{\phi} = \id_P \otimes \phi \otimes \id_Q, \\
\widetilde{\psi} : \widetilde{Z} \to C, \qquad \widetilde{\psi}(p \otimes x \otimes q) = q(p \psi(x)).
\end{gather*}
We claim that the kernels and cokernels of these maps have GK dimension at most $d-2$, which will complete the proof. \\
\indent We first consider the kernel and cokernel of $\widetilde{\phi}$. Since $P$ and $Q$ are projective (hence flat) on the right and left respectively, we can make an identification $\ker \widetilde{\phi} \cong P \otimes_B \ker \phi \otimes_B Q$. Since $P$ and $Q$ are finitely generated modules, \cite[Proposition 8.3.14]{mandr} implies that $\GKdim \ker \widetilde{\phi} \leqslant \GKdim \ker \phi \leqslant d-2$, as required. Flatness of $P$ and $Q$ also allows us to make identifications $\im \widetilde{\phi} \cong P \otimes_B \im \phi \otimes_B Q$ and
$\coker \widetilde{\phi} \cong P \otimes_B \coker \phi \otimes_B Q$, and so $\GKdim \coker \widetilde{\phi} \leqslant \GKdim \coker \phi \leqslant d-2$. \\
\indent We now turn our attention to $\widetilde{\psi}$. Observe that we can view $\widetilde{\psi}$ as the composition
\begin{align*}
\widetilde{\psi} : P \otimes_{B} Z \otimes_B Q \xrightarrow{\id_P \otimes \psi \otimes \id_Q} P \otimes_{B} B \otimes_B Q \xrightarrow{m \otimes \id_Q} P \otimes_B Q \xrightarrow{\mu} C
\end{align*}
where $m : P \otimes_B B \to P$ is the multiplication map and $\mu: P \otimes_B Q \to C$ is the evaluation map. Flatness of $Q$ implies that $m \otimes \id_Q$ is an isomorphism, while $\mu$ is an isomorphism because $P$ is a progenerator. Thus $\ker \widetilde{\psi} = \ker (\id_P \otimes \psi \otimes \id_Q)$, and again we can identify this with $P \otimes_B \ker \psi \otimes_B Q$, which has GK dimension at most $d-2$ using the same argument as in the previous paragraph. Similarly, we have an identification $\im \widetilde{\psi} \cong P \otimes_B \im \psi \otimes_B Q$, and arguing again as above, we find that $\im \widetilde{\psi}$ has GK dimension at most $d-2$, completing the proof.
\end{proof}

\subsection{Intersection theory for a family of noncommutative resolutions}
We return now to the $\Bbbk$-algebra of interest, namely $\mathcal{O}^\lambda$ where $\lambda = \varepsilon_0$. Our first aim is to identify an appropriate NQCR, which we have in fact already done:

\begin{lem} \label{nccr}
$\Pi^\lambda$ is a NQCR of $\mathcal{O}^\lambda$.
\end{lem}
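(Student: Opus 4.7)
The plan is to verify the two defining properties of a noncommutative resolution for the module $M = \Pi^\lambda e_0$. Lemma \ref{Pireflex} already does most of the work: it guarantees that $\Pi^\lambda e_0$ is a finitely generated $\mathcal{O}^\lambda$-module and identifies $\End_{\mathcal{O}^\lambda}(\Pi^\lambda e_0) = \Pi^\lambda$. So only two things remain to check: that $\Pi^\lambda e_0$ is a generator of $\mod \mathcal{O}^\lambda$, and that $\gldim \Pi^\lambda < \infty$.

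For the generator condition, I would simply observe the decomposition $\Pi^\lambda e_0 = \bigoplus_{i \in \widetilde{Q}_0} V_i$, where $V_i = e_i \Pi^\lambda e_0$. Since $V_0 = e_0 \Pi^\lambda e_0 = \mathcal{O}^\lambda$, the regular module $\mathcal{O}^\lambda$ is a direct summand of $\Pi^\lambda e_0$, so $\Pi^\lambda e_0$ is automatically a generator of $\mod \mathcal{O}^\lambda$.

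For the global dimension, I would appeal to \cite[Theorem 1.5]{cbh}, which gives $\gldim \Pi^\lambda \leqslant 2$ whenever $\widetilde{Q}$ is extended Dynkin; this was already invoked in the proof of Proposition \ref{addPi}. In particular it applies to the weight $\lambda = \varepsilon_0$ considered here.

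Combining these three ingredients shows that $\Pi^\lambda = \End_{\mathcal{O}^\lambda}(\Pi^\lambda e_0)$ is the endomorphism ring of a generator and has finite global dimension, which is exactly the definition of a noncommutative resolution. There is no real obstacle here; the statement is essentially a bookkeeping corollary of results already established in Section \ref{singcatsec} together with the finite global dimension of $\Pi^\lambda$ proved in \cite{cbh}.
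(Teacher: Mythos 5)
Your proof is correct and matches the paper's argument: both observe that $\mathcal{O}^\lambda = e_0\Pi^\lambda e_0$ is a direct summand of $\Pi^\lambda e_0$ (hence a generator), invoke Lemma \ref{Pireflex} for $\End_{\mathcal{O}^\lambda}(\Pi^\lambda e_0) = \Pi^\lambda$, and cite \cite[Theorem 1.5]{cbh} for $\gldim\Pi^\lambda \leqslant 2$. The only cosmetic difference is that you route the direct-summand observation through the decomposition $\Pi^\lambda e_0 = \bigoplus_i V_i$, which is the same fact stated slightly differently.
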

\begin{proof}
Since $\mathcal{O}^\lambda$ is Auslander-Gorenstein, $\Pi^\lambda$ is Auslander regular and Cohen-Macaulay, and they both have GK dimension 2 \cite[Theorem 1.5, Theorem 1.6]{cbh}, it suffices to show that $\Pi^\lambda e_0 \in \text{bimod-} (\Pi^\lambda, \mathcal{O}^\lambda)$ and $e_0 \Pi^\lambda \in \text{bimod-} (\mathcal{O}^\lambda, \Pi^\lambda)$ are reflexive on both sides, and that
\begin{align*}
\Pi^\lambda e_0 \otimes_{\mathcal{O}^\lambda} e_0 \Pi^\lambda \cong_0 \Pi^\lambda \quad \text{and} \quad e_0 \Pi^\lambda \otimes_{\Pi^\lambda} \Pi^\lambda e_0 \cong_0 \mathcal{O}^\lambda.
\end{align*}
\indent By Lemma \ref{Pireflex}, $\Pi^\lambda e_0$ is reflexive as a right $\mathcal{O}^\lambda$-module, and since it is a generator for $\mod \mathcal{O}^\lambda$, \cite[Proposition 18.17]{lam} implies it is also reflexive as left $\Pi^\lambda$-module. Similarly, $e_0 \Pi^\lambda$ is a reflexive module on both sides. \\
\indent The 0-isomorphism $e_0 \Pi^\lambda \otimes_{\Pi^\lambda} \Pi^\lambda e_0 \cong_0 \mathcal{O}^\lambda$ follows from the fact that these two modules are actually isomorphic. To see that $\Pi^\lambda e_0 \otimes_{\mathcal{O}^\lambda} e_0 \Pi^\lambda \cong_0 \Pi^\lambda$, it suffices to show that the multiplication map
\begin{align*}
m : \Pi^\lambda e_0 \otimes_{\mathcal{O}^\lambda} e_0 \Pi^\lambda \to \Pi^\lambda
\end{align*}
has finite-dimensional kernel and cokernel. The cokernel of $m$ is $\Pi^\lambda/\Pi^\lambda e_0 \Pi^\lambda \cong \Pi(Q_\lambda) = \Pi(Q)$, which is finite-dimensional. To see that $K = \ker m$ is finite-dimensional, factor $m$ as
\begin{align*}
m : \Pi^\lambda e_0 \otimes_{\mathcal{O}^\lambda} e_0 \Pi^\lambda \xrightarrow{\pi} \Pi^\lambda e_0 \Pi^\lambda \rightarrow \Pi^\lambda
\end{align*}
where $\pi$ is surjective and hence $\ker \pi = K$. We then have a short exact sequence
\begin{align*}
0 \to K \to \Pi^\lambda e_0 \otimes_{\mathcal{O}^\lambda} e_0 \Pi^\lambda \xrightarrow{\pi} \Pi^\lambda e_0 \Pi^\lambda \to 0.
\end{align*}
Since $\Pi^\lambda e_0$ is a finitely generated $\mathcal{O}^\lambda$-module, $\Pi^\lambda e_0 \otimes_{\mathcal{O}^\lambda} e_0 \Pi^\lambda$ is a finitely generated $\Pi^\lambda$-module, and so $K$ is also a finitely generated $\Pi^\lambda$-module. Applying $- \otimes_{\Pi^\lambda} \Pi^\lambda e_0$ to the above sequence, which is an exact functor since $\Pi^\lambda e_0$ is a projective left $\Pi^\lambda$-module, we obtain the short exact sequence
\begin{align*}
0 \to K \otimes_{\Pi^\lambda} \Pi^\lambda e_0 \to \Pi^\lambda e_0 \otimes_{\mathcal{O}^\lambda} e_0 \Pi^\lambda \otimes_{\Pi^\lambda} \Pi^\lambda e_0 \to \Pi^\lambda e_0 \Pi^\lambda \otimes_{\Pi^\lambda} \Pi^\lambda e_0 \to 0.
\end{align*}
It is easy to see that the above sequence is in fact
\begin{align*}
0 \to K e_0 \to \Pi^\lambda e_0 \xrightarrow{\sim} \Pi^\lambda e_0 \to 0,
\end{align*}
and so $K e_0 = 0$. It follows that $K$ has the structure of a finitely generated right $ \Pi^\lambda / \Pi^\lambda e_0 \Pi^\lambda$-module, and is therefore finite-dimensional. By definition, we find that $\Pi^\lambda$ is a NQCR of $\mathcal{O}^\lambda$.
\end{proof}

We can actually obtain infinitely many noncommutative resolutions of $\mathcal{O}^\lambda$ using the dual reflections $r_i$ of \cite{cbh}, the definition of which was given prior to Lemma \ref{bodlemma}. It is clear that the $r_i$ preserve the $\mathbb{Z}^{n+1}$ lattice inside $\Bbbk^{n+1}$. It was also noted earlier that $\lambda \cdot \delta = r_i \lambda \cdot \delta$ for all $\lambda \in \Bbbk^{\widetilde{Q}_0}$ and $i \in \widetilde{Q}_0$, so that the $r_i$ preserve the affine hyperplanes $\{ \lambda \in \Bbbk^{n+1} \mid \lambda \cdot \delta = c \}$ for each $c \in \Bbbk$; since $\varepsilon_0 \cdot \delta = 1$, we are primarily interested in the case $c=1$. Crawley-Boevey--Holland proved the following useful result:

\begin{lem}[{{\cite[Corollary 5.2]{cbh}}}] \label{refthm}
Let $\rho$ be a composition of dual reflections. Then $\Pi^\lambda$ is Morita equivalent to $\Pi^{\rho(\lambda)}$.
\end{lem}

\indent By combining Lemmas \ref{moritainvariant}, \ref{nccr} and \ref{refthm}, we obtain the following:

\begin{cor} \label{resolutioncor}
$\Pi^{\rho(\lambda)}$ is a NQCR of $\mathcal{O}^\lambda$ for any composition of dual reflections $\rho$.
\end{cor}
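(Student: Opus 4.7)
My plan is to combine the Morita equivalence of Theorem \ref{refthm} with the endomorphism-ring description $\Pi^\lambda = \End_{\mathcal{O}^\lambda}(\Pi^\lambda e_0)$ from Lemma \ref{Pireflex}. The smoothness half of the noncommutative resolution requirement is essentially free: global dimension is a Morita invariant, so $\gldim \Pi^{\rho(\lambda)} = \gldim \Pi^\lambda \leqslant 2$ by \cite[Theorem 1.5]{cbh}. The substantive task is therefore to realise $\Pi^{\rho(\lambda)}$ as $\End_{\mathcal{O}^\lambda}(M)$ for some generator $M \in \mod \mathcal{O}^\lambda$.

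To produce $M$, I would invoke classical Morita theory to extract from Theorem \ref{refthm} a progenerator $P \in \proj \Pi^\lambda$ with $\End_{\Pi^\lambda}(P) \cong \Pi^{\rho(\lambda)}$, and then transport $P$ back through the equivalence $\add(\Pi^\lambda e_0) \xrightarrow{\simeq} \proj \Pi^\lambda$ used in the proof of Theorem \ref{addequiv} (see (\ref{addequivproj})). This delivers an object $M \in \add(\Pi^\lambda e_0) \subset \mod \mathcal{O}^\lambda$ with $\Hom_{\mathcal{O}^\lambda}(\Pi^\lambda e_0, M) \cong P$; because the functor is fully faithful, endomorphism rings are preserved, yielding $\End_{\mathcal{O}^\lambda}(M) \cong \End_{\Pi^\lambda}(P) \cong \Pi^{\rho(\lambda)}$.

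Finally, I would check that $M$ is a generator. Since $P$ is a progenerator of $\mod \Pi^\lambda$, we have $\Pi^\lambda \in \add P$, and so transporting along the quasi-inverse gives $\Pi^\lambda e_0 \in \add M$. But $\mathcal{O}^\lambda = V_0$ is a direct summand of $\Pi^\lambda e_0 = \bigoplus_i V_i$, so $\mathcal{O}^\lambda \in \add M$, confirming that $M$ generates $\mod \mathcal{O}^\lambda$. The only mildly delicate point in this plan is ensuring that Morita theory actually provides such a progenerator realising the equivalence of Theorem \ref{refthm} in the form needed, but this is completely standard; I expect no genuine obstacle, and the corollary really is an immediate consequence of Lemma \ref{nccr} and Theorem \ref{refthm}.
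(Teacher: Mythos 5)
Your proposal is correct and takes essentially the same route as the paper: the paper simply states the corollary as an immediate consequence of Lemma \ref{nccr} and Theorem \ref{refthm}, leaving the Morita-theoretic bookkeeping implicit, whereas you spell out exactly that bookkeeping (transporting the progenerator through the equivalence $\add(\Pi^\lambda e_0)\simeq\proj\Pi^\lambda$ and checking the generator condition). Nothing is missing, and your final sentence correctly identifies the corollary as immediate from those two results.
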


\indent As stated previously, we need to identify an analogue of the exceptional curves appearing in the minimal resolution of a Kleinian singularity. When $\lambda = \varepsilon_0$, by Lemma \ref{PiofDynkin}, $\Pi^\lambda$ has precisely $n$ isoclasses of finite-dimensional simple modules, and hence by Morita equivalence so does $\Pi^{\rho(\lambda)}$ for any composition of dual reflections $\rho$. These will play the role of the exceptional objects in our noncommutative resolution. \\
\indent We also require a notion of intersection multiplicity for the exceptional objects, which is provided by \cite{mori}. Given a nonsingular noetherian ring $S$ and $M,N \in \mod S$ which satisfy $\dim_\Bbbk \Ext_S^\ell(M,N) < \infty$ for all $\ell \geqslant 0$, we define the \emph{intersection multiplicity} of $M$ and $N$ to be 
\begin{gather*}
M \bullet N \coloneqq \sum_{\ell \geqslant 0} (-1)^{\ell+1} \dim_\Bbbk \Ext_{S}^{\ell}(M,N)
\end{gather*}
(note that this sum has finitely many terms since $S$ is nonsingular).\\
\indent We are now in a position to prove a preliminary version of Theorem \ref{introthm2}:

\begin{thm}\label{inttheory}
Let $\widetilde{Q}$ be an extended Dynkin quiver with $n+1$ vertices, and let $\lambda = \varepsilon_0$. Let $\mu = \rho(\lambda)$, where $\rho$ is any composition of dual reflections, so that $\Pi^\mu$ is a NQCR of $\mathcal{O}^\lambda$. Then $\Pi^\mu$ has precisely $n$ finite-dimensional simple modules $S_i$ up to isomorphism, and with a suitable indexing of them, the intersection matrix $\Gamma$ with entries $\Gamma_{ij} = S_i \bullet S_j$ is $-C$, where $C$ is the Cartan matrix corresponding to $Q$.
\end{thm}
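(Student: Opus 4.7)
The plan is to reduce to the case $\mu = \lambda = \varepsilon_0$ via the Morita equivalence in Theorem \ref{refthm}, and then compute Ext groups between finite-dimensional simples using an explicit length-two projective resolution.

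Since Theorem \ref{refthm} gives a Morita equivalence $\Pi^\mu \simeq \Pi^\lambda$, and any Morita equivalence induces a bijection between finite-dimensional simples and preserves all $\Ext^\ell$ groups, the intersection matrix of $\Pi^\mu$ agrees with that of $\Pi^\lambda$ after a suitable relabeling of the simples. Hence it suffices to compute $\Gamma$ for $\Pi^\lambda$. By \cite[Lemma 7.1 (2)]{cbh}, the finite-dimensional simples of $\Pi^\lambda$ are exactly the one-dimensional simples $S_i$ concentrated at the vertices $i \in \{1,\dots,n\}$, so there are precisely $n$ of them.

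Next I would write down the standard length-two mesh projective resolution of $S_i$ as a right $\Pi^\lambda$-module,
\begin{align*}
0 \to e_i \Pi^\lambda \xrightarrow{d_2} \bigoplus_{k=0}^n (e_k \Pi^\lambda)^{m_{ik}} \xrightarrow{d_1} e_i \Pi^\lambda \to S_i \to 0,
\end{align*}
where $m_{ik}$ counts edges between $i$ and $k$ in $\widetilde Q$, the map $d_1$ is left multiplication by the arrows of $\overline{\widetilde Q}$ starting at $i$, and $d_2$ is left multiplication by the tuple encoding the preprojective relation at vertex $i$. Exactness at the middle term is precisely this relation, which, since $\lambda_i = 0$ for every $i \neq 0$, holds in its undeformed form; the length-two bound follows from $\gldim \Pi^\lambda \leq 2$ by \cite[Theorem 1.5]{cbh}. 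Applying $\Hom_{\Pi^\lambda}(-, S_j)$ for $j \neq 0$, and using that $\Hom(e_k \Pi^\lambda, S_j) = \delta_{kj}\Bbbk$ (so the $k = 0$ summand contributes nothing), the complex collapses to
\begin{align*}
0 \to \delta_{ij}\Bbbk \xrightarrow{d_1^*} \Bbbk^{A_{ij}} \xrightarrow{d_2^*} \delta_{ij}\Bbbk \to 0,
\end{align*}
where $A_{ij}$ is the $(i,j)$-entry of the adjacency matrix of $Q$, since the edges of $\widetilde Q$ not meeting vertex $0$ are precisely those of $Q$. Both induced maps vanish, because $d_1$ and $d_2$ are given by left multiplication by elements of the radical of $\Pi^\lambda$, which annihilates every simple. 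Reading off dimensions and plugging into the definition of intersection multiplicity yields
\begin{align*}
\Gamma_{ij} \;=\; -\delta_{ij} + A_{ij} - \delta_{ij} \;=\; -(2\delta_{ij} - A_{ij}) \;=\; -C_{ij},
\end{align*}
as required.

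The main obstacle I expect is justifying rigorously that the displayed complex is a genuine projective resolution, in particular verifying that $d_2$ is injective and that the syzygy of $S_i$ is generated by precisely the mesh element. A cleaner alternative would be to invoke the 2-Calabi-Yau property of $\Pi^\lambda$ (valid since $\lambda_i = 0$ for $i \neq 0$) to obtain the duality $\Ext^2(S_i, S_j) \cong D\Hom(S_j, S_i) = \delta_{ij}\Bbbk$ for free; combined with the immediate identification $\dim\Ext^1(S_i,S_j) = A_{ij}$ read off from the arrows of $\overline{\widetilde Q}$ between $i$ and $j$, the Euler form computation then becomes essentially automatic.
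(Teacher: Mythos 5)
Your overall strategy coincides with the paper's: reduce to $\lambda = \varepsilon_0$ by Morita invariance, observe that the finite-dimensional simples are one-dimensional and supported at the vertices $1,\dots,n$, build a length-two projective resolution of each $S_i$ whose terms are $e_i\Pi^\lambda$, $\bigoplus_{k\in\partial i} e_k\Pi^\lambda$, $e_i\Pi^\lambda$, apply $\Hom(-,S_j)$, and compute the Euler characteristic. Your observation that the induced maps vanish because they are left multiplication by radical elements is a slightly slicker way of seeing what the paper checks case by case, and your arithmetic $\Gamma_{ij} = -2\delta_{ij} + A_{ij} = -C_{ij}$ is the same.

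The gap you flag at the end is the genuine one. The exactness of the mesh complex --- in particular the injectivity of the left-hand map and the identification of $\ker d_1$ with $\im d_2$ --- is not automatic, and the paper devotes real effort to it: Theorem \ref{inttheory} explicitly invokes Lemma \ref{neighbourslem}, whose proof is deferred to Sections \ref{secA}, \ref{secD}, and \ref{secE} and carried out by the knitting algorithm together with Lemma \ref{grrlem} and Lemma \ref{exactseqlem} (one proves exactness for $\lambda = 0$ via knitting and then lifts it through the filtration). The paper then obtains your mesh resolution of $S_i$ over $\Pi^\lambda$ by applying the left-exact functor $\Hom_{\mathcal{O}^\lambda}(\Pi^\lambda e_0, -)$ to the $\mathcal{O}^\lambda$-module sequence of Lemma \ref{neighbourslem} and using Corollary \ref{homs} to identify the terms. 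So your proposal is correct in spirit but incomplete where you say it is. Your suggested alternative via the 2-Calabi-Yau property, giving $\Ext^2_{\Pi^\lambda}(S_i,S_j) \cong D\Hom_{\Pi^\lambda}(S_j,S_i)$ directly, is a legitimate and shorter route, but it trades one nontrivial input for another: one would still need to justify that $\Pi^{\varepsilon_0}(\widetilde Q)$ (a noetherian but infinite-dimensional algebra) enjoys the relevant CY duality for finite-dimensional modules, which the paper does not establish and does not cite.
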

\begin{proof}
The discussion after Corollary \ref{resolutioncor} shows that $\Pi^\mu$ has $n$ finite-dimensional simple modules $S_i$ up to isomorphism, so it remains to prove the result on the intersection multiplicities. \\
\indent Since Morita equivalence preserves dimensions of Hom and Ext groups, we are able to calculate the intersection numbers of the finite-dimensional $\Pi^\mu$-modules by performing the calculations over $\Pi^\lambda$ instead. Identifying $\Pi^\lambda$-modules with representations of $\overline{\widetilde{Q}}$ which satisfy the relations coming from $\Pi^\lambda$, \cite[Lemma 7.2 (6), Theorem 7.4]{cbh} tells us that the dimension vector of $S_i$ is $\varepsilon_i \in \mathbb{N}^{n+1}$. It follows that
\begin{gather}
S_i \cong \frac{e_i \Pi^\lambda}{\bigoplus_{\genfrac{}{}{0pt}{}{\alpha \in \overline{\widetilde{Q}}_1}{t(\alpha)=i}} \alpha \Pi^\lambda}. \label{simplemod}
\end{gather}
Also observe that
\begin{gather}
\Hom_{\Pi^\lambda}(e_i \Pi^\lambda,S_j) = 
\left\{ 
\begin{array}{cl}
\Bbbk e_i & \text{if } i = j, \\
0 & \text{if } i \neq j.
\end{array} \right. \label{homsimple}
\end{gather}
The proof of \cite[Lemma 10.1]{cbh} shows that, for each $i \neq 0$, there is an exact sequence of $\Pi^\lambda$-modules
\begin{gather*}
0 \to e_i \Pi^\lambda \xrightarrow{\phi} \bigoplus_{k \in \partial i} e_k \Pi^\lambda \xrightarrow{\psi} e_i \Pi^\lambda \to S_i \to 0.
\end{gather*}
Since the modules $e_k \Pi^\lambda$ are direct summands of $\Pi^\lambda$ and hence projective, this is in fact a projective resolution of $S_i$. Now let $1 \leqslant j \leqslant n$. Seeking to calculate the extension groups between $S_i$ and $S_j$, we apply $\Hom_{\Pi^\lambda}(-,S_j)$ to the corresponding deleted resolution to obtain the complex
\begin{gather}
0 \to \Hom_{\Pi^\lambda}(e_i \Pi^\lambda,S_j) \to \bigoplus_{k \in \partial i} \Hom_{\Pi^\lambda}(e_k \Pi^\lambda,S_j) \to \Hom_{\Pi^\lambda}(e_i \Pi^\lambda,S_j) \to 0. \label{intersectioncomplex}
\end{gather}
We now consider three distinct cases when computing the homology of this complex. If $j=i$ then, using (\ref{homsimple}), as a complex of vector spaces (\ref{intersectioncomplex}) becomes
\begin{gather*}
0 \to \Bbbk \to 0 \to \Bbbk \to 0
\end{gather*}
and so we can immediately read off that
\begin{gather*}
\dim_\Bbbk \Hom_{\Pi^\lambda}(S_i,S_i) = 1 = \dim_\Bbbk \Ext_{\Pi^\lambda}^2(S_i,S_i), \\ 
\dim_\Bbbk \Ext_{\Pi^\lambda}^\ell (S_i,S_i) = 0 \quad \text{for } \ell = 1 \text{ or } \ell \geqslant 3 ,
\end{gather*}
and so $S_i \bullet S_i = -1+0-1 = -2$. If $j \in \partial i$, then (\ref{intersectioncomplex}) becomes
\begin{gather*}
0 \to 0 \to \Bbbk \to 0 \to 0
\end{gather*}
and so
\begin{gather*}
\dim_\Bbbk \Ext_{\Pi^\lambda}^1(S_i,S_i) = 1, \qquad \dim_\Bbbk \Ext_{\Pi^\lambda}^\ell (S_i,S_i) = 0 \quad \text{for } \ell = 0 \text{ or } \ell \geqslant 2.
\end{gather*}
That is, if $i$ and $j$ are adjacent in $\widetilde{Q}$, then $S_i \bullet S_j = 0+1+0 =1$. Finally, if $j \neq i$ and $j \notin \partial i$ then (\ref{intersectioncomplex}) becomes
\begin{gather*}
0 \to 0 \to 0 \to 0 \to 0
\end{gather*} and clearly
\begin{gather*}
\dim_\Bbbk \Ext_{\Pi^\lambda}^\ell (S_i,S_i) = 0 \quad \text{for } \ell \geqslant 0,
\end{gather*}
and so $S_i \bullet S_j = 0$ in this case. It follows that the intersection matrix $\Gamma$ satisfies $\Gamma = -C$.
\end{proof}

\indent The above result should be seen as a noncommutative analogue of the geometric McKay correspondence. However, we can strengthen this result by showing that $\mathcal{O}^\lambda$ possesses a NQCR which is actually a ``deformation'': that is, a NQCR of the form $\mathcal{O}^\mu$ for some weight $\mu$. Since we are restricting our attention to quasi-dominant weights, the fact that $\mathcal{O}^\mu$ is nonsingular forces $\mu_i \succ 0$ for all $i \geqslant 1$ (see Lemma \ref{quasidomlem}). It is not immediately clear that such a deformation exists; we prove its existence in the next subsection. 

\subsection{$\mathcal{O}^\lambda$ has a NQCR which is a deformation}
\indent The dual reflections defined earlier also appear in the so-called \emph{numbers game} of \cite{mozes}. The relationship between this game and our setting is that the moves considered by Mozes can equivalently be described as an application of a dual reflection to a weight $\lambda$. This allows us to make use of some of the results from this paper; in particular we are able to prove that, for $\lambda = \varepsilon_0$, NQCRs of $\mathcal{O}^\lambda$ which are also deformations exist:

\begin{lem} \label{reflecttosmooth}
Let $\widetilde{Q}$ be an extended Dynkin quiver with $n+1$ vertices. Then there exists a sequence of dual reflections $\rho$ such that $\rho(\varepsilon_0)_i > 0$ for all $i \neq 0$; in particular, $\rho(\varepsilon_0)$ is quasi-dominant.
\end{lem}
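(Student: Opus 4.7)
The plan is to translate the dual reflections into the firing moves of Mozes' numbers game on $\widetilde{Q}$ and then to appeal to a reachability theorem from \cite{mozes}. I would start by unpacking the definition: from $\widetilde{C} = 2I - A$ one finds that $r_i$ sends $\mu_i \mapsto -\mu_i$ and $\mu_j \mapsto \mu_j + (\text{number of edges between } i \text{ and } j) \cdot \mu_i$ for $j \neq i$, which is precisely a firing move in Mozes' game. Since each $r_i$ preserves $\mu \cdot \delta$ and $\varepsilon_0 \cdot \delta = 1$, every weight in the orbit of $\varepsilon_0$ lies on the affine hyperplane $\{\mu : \mu \cdot \delta = 1\}$. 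Note also that $r_i \varepsilon_0 = \varepsilon_0$ whenever $i \neq 0$ (since the formula involves $(\varepsilon_0)_i = 0$), so any non-trivial sequence must begin with $r_0$.

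I would therefore apply $r_0$ as a first move, obtaining a weight which is $-1$ at vertex $0$ and $1$ at every neighbour of $0$, placing us in a configuration where several non-extending reflections act non-trivially. The core of the argument is to invoke the main reachability result of Mozes, which applies precisely to graphs whose associated bilinear form is positive semidefinite with one-dimensional radical spanned by $\delta$ (the extended Dynkin case): any configuration $\mu$ with $\mu \cdot \delta > 0$ admits a finite sequence of firings terminating in the closed dominant chamber of the finite Weyl subgroup $W = \langle r_1, \ldots, r_n \rangle$, that is, at a weight $\mu'$ with $\mu'_i \geqslant 0$ for all $1 \leqslant i \leqslant n$.

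The main obstacle, and the step demanding the most work, is strengthening $\geqslant 0$ to the strict positivity required by the lemma. If $\mu'_j = 0$ for some $j \neq 0$, I would produce strict positivity by a further carefully chosen local sequence of reflections at $j$ and its neighbours that pushes a positive quantum of mass onto $j$ while preserving non-negativity at the other non-extending vertices; the flexibility needed is supplied by the constraint $\mu' \cdot \delta = 1$ with $\mu'_0$ free to become arbitrarily negative, so that the excess may be absorbed at vertex $0$. This last step is most easily verified case by case in the types $\widetilde{\mathbb{A}}_n$, $\widetilde{\mathbb{D}}_n$, $\widetilde{\mathbb{E}}_6$, $\widetilde{\mathbb{E}}_7$, $\widetilde{\mathbb{E}}_8$ using the explicit $\delta$-vectors listed in Section~\ref{prelim}; the quasi-dominance conclusion then follows immediately since $\rho(\lambda)_i > 0$ implies $\rho(\lambda)_i \succ 0$.
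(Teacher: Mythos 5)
Your proposal identifies the right general tool (Mozes' numbers game) but has a genuine gap exactly where you locate the difficulty, and the paper's argument resolves that difficulty by a completely different mechanism.

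The half of your argument that is carefully worked out — translating $r_i$ into firing moves, noting that $r_i$ fixes $\varepsilon_0$ for $i \neq 0$, computing $r_0 \varepsilon_0$ explicitly, and invoking a reachability theorem to land in $\{\mu \mid \mu_i \geqslant 0 \text{ for } 1 \leqslant i \leqslant n\}$ — is actually vacuous for this particular starting point. The weight $\varepsilon_0$ already lies in the closed dominant chamber (in fact $\varepsilon_0 \in \{\mu \mid \mu_i \geqslant 0 \text{ for all } i\}$, the genuine fundamental domain), so any reachability statement adds nothing. The entire content of the lemma is precisely the strict inequality $\rho(\lambda)_i > 0$, and that is the part you defer to a ``carefully chosen local sequence'' verified ``case by case.'' Moreover, the local-repair strategy faces a structural obstruction you don't address: from a configuration with $\mu_j = 0$, the reflection $r_j$ acts trivially, and $r_j$ applied to a vertex with $\mu_j > 0$ flips that entry negative. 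So one cannot inject positivity at $j$ without first leaving the closed chamber entirely, meaning the repair is not local, and it is exactly the kind of step that (as the remark after the lemma in the paper shows, citing Gashi--Schedler--Speyer for some but not all types) does not admit a uniform explicit description across the Dynkin families.

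The paper avoids this entirely with a soft geometric argument. By \cite[Lemma 5.5]{mozes}, the set $U = \{\lambda \in V \mid \lambda_i \geqslant 0 \text{ for all } i\}$ (where $V = \{\lambda \mid \lambda \cdot \delta = 1\}$) is a bounded $n$-simplex and a fundamental domain for the group $G$ generated by all dual reflections, so $V = \bigcup_{\rho \in G} \rho U$ with every tile $\rho U$ of the same finite diameter. The target region $H = \{\lambda \in V \mid \lambda_i > 0 \text{ for } i \neq 0\}$ is open, convex, and contains Euclidean balls of arbitrarily large radius inside $V$, so some tile $\rho U$ lies entirely in $H$; since $\varepsilon_0 \in U$, this $\rho$ does the job. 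No case analysis, no numbers game, and strict positivity comes for free because $H$ is open. If you want a constructive $\rho$ you would indeed need to play the numbers game, but for an existence proof the tiling-plus-diameter argument is the cleaner route.
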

\begin{proof}
It suffices to show that we can find such a sequence of dual reflections when we work over the field $\mathbb{R}$, since any such sequence will also have the desired effect on $\varepsilon_0$ when we work over our algebraically closed field $\Bbbk$ of characteristic $0$. Accordingly, we write $<$ instead of $\prec$ for the total order. Write $G$ for the group generated by the dual reflections, which is simply the Weyl group of type corresponding to $Q$. Lemma 5.5 of \cite{mozes}, when translated into our notation, says that $\{ \lambda \in \mathbb{R}^{n+1} \mid \lambda_i \geqslant 0 \text{ for all } 0 \leqslant i \leqslant n \}$ is a fundamental domain for the action of $G$ on $\{ \lambda \in \mathbb{R}^{n+1} \mid \lambda \cdot \delta > 0 \}$. Recalling that $G$ preserves the affine hyperplane $V \coloneqq \{\lambda \in \mathbb{R}^{n+1} \mid \lambda \cdot \delta = 1 \}$, it follows that $V = \bigcup_{\rho \in G} \rho \gap U$, where $U$ is the $n$-simplex $\{ \lambda \in \mathbb{R}^{n+1} \mid \lambda_i \geqslant 0 \text{ for all } 0 \leqslant i \leqslant n \text{ and } \lambda \cdot \delta = 1\}$. Let $H = \{ \lambda \in V \mid \lambda_i > 0 \text{ for all } i \neq 0 \}$, which is a convex subset of $V$ containing open balls of arbitrarily large diameter. Since each $\rho U$ has the same finite diameter, there exists some $\rho \in G$ with $\rho U \subseteq H$. In particular, $\rho (\varepsilon_0) \in H$; that is, $\rho(\varepsilon_0)_i > 0$ for all $i \neq 0$.
\end{proof}

\begin{rem}
By playing Mozes' numbers game, one can often determine an explicit sequence of dual reflections $\rho$ satisfying the hypotheses of Lemma \ref{reflecttosmooth}. For example, if $\widetilde{Q} = \widetilde{\mathbb{A}}_4$, then the numbers game starting with the initial configuration $(-3,1,1,1,1)$ terminates at $\varepsilon_0$, and so by applying the corresponding dual reflections in reverse we obtain the desired $\rho$. More generally, \cite[Proposition 5.1]{schedler} tells us that when $\widetilde{Q}$ is of type $\widetilde{\mathbb{A}}_{2m}$, $\widetilde{\mathbb{D}}_{4m}$, $\widetilde{\mathbb{D}}_{4m+1}$, $\widetilde{\mathbb{E}}_{6}$ or $\widetilde{\mathbb{E}}_{8}$, where $m$ is a positive integer, then the numbers game starting with the initial configuration $(1- \sum_{i=1}^{n} \delta_i, 1, 1, \dots, 1)$ terminates at $\varepsilon_0$, and so this determines a sequence of dual reflections $\rho$ such that $\rho(\lambda)_i  > 0$ for all $i \neq 0$.
\end{rem}

\indent We are now in a position to prove Theorem \ref{introthm2}:

\begin{thm}\label{inttheory2}
Let $\widetilde{Q}$ be an extended Dynkin quiver with $n+1$ vertices, and let $\lambda = \varepsilon_0$. Then $\mathcal{O}^\lambda$ has a NQCR of the form $\mathcal{O}^\mu$, where $\mathcal{O}^\mu$ has precisely $n$ finite-dimensional simple modules $S_i$ up to isomorphism. With a suitable indexing of the $S_i$, the intersection matrix $\Gamma$ with entries $\Gamma_{ij} = S_i \bullet S_j$ is $-C$, where $C$ is the Cartan matrix corresponding to $Q$.
\end{thm}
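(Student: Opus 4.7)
The plan is to pick $\mu$ using Lemma \ref{reflecttosmooth} so that every vertex module over $\mathcal{O}^\mu$ is projective, and then transport the noncommutative resolution $\Pi^\mu$ down to $\mathcal{O}^\mu$. First I would apply Lemma \ref{reflecttosmooth} to obtain $\mu = \rho(\lambda)$ with $\mu_i > 0$ for every $i \neq 0$. Lemma \ref{isproj} then forces each vertex module $V_i^\mu$ with $i \neq 0$ to be projective over $\mathcal{O}^\mu$, so $\Pi^\mu e_0 = \bigoplus_i V_i^\mu$ is a projective $\mathcal{O}^\mu$-module containing $V_0^\mu = \mathcal{O}^\mu$ as a summand, and hence is a progenerator. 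Combined with Lemma \ref{Pireflex}, this makes $\mathcal{O}^\mu$ and $\Pi^\mu$ Morita equivalent, while Theorem \ref{refthm} makes $\Pi^\mu$ and $\Pi^\lambda$ Morita equivalent, so in particular $\gldim \mathcal{O}^\mu = \gldim \Pi^\lambda < \infty$.

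The next step is to realise $\mathcal{O}^\mu$ as $\End_{\mathcal{O}^\lambda}(N)$ for some generator $N$ of $\mod \mathcal{O}^\lambda$. From the composite Morita equivalence between $\mathcal{O}^\mu$ and $\Pi^\lambda$, there exists a progenerator $P$ of $\mod \Pi^\lambda$ with $\End_{\Pi^\lambda}(P) \cong \mathcal{O}^\mu$. Since $\Pi^\lambda = \End_{\mathcal{O}^\lambda}(\Pi^\lambda e_0)$ by Lemma \ref{Pireflex}, the classical equivalence $\Hom_{\mathcal{O}^\lambda}(\Pi^\lambda e_0, -) : \add_{\mathcal{O}^\lambda}(\Pi^\lambda e_0) \to \proj \Pi^\lambda$ preserves endomorphism rings; lifting $P$ through its inverse produces an $\mathcal{O}^\lambda$-module $N \in \add(\Pi^\lambda e_0)$ with $\End_{\mathcal{O}^\lambda}(N) \cong \End_{\Pi^\lambda}(P) \cong \mathcal{O}^\mu$. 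Because $P$ is a progenerator of $\mod \Pi^\lambda$, each indecomposable projective $e_i \Pi^\lambda$ is a summand of some $P^k$, and so each vertex module $V_i^\lambda$ is a summand of some $N^k$. In particular $V_0^\lambda = \mathcal{O}^\lambda$ is a summand of $N$ by Krull-Schmidt, so $N$ is a generator of $\mod \mathcal{O}^\lambda$; this shows $\mathcal{O}^\mu$ is a noncommutative resolution of $\mathcal{O}^\lambda$.

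For the simple modules and intersection numbers I would invoke Morita invariance. The Morita equivalence between $\mathcal{O}^\mu$ and $\Pi^\mu$ carries the $n$ finite-dimensional simple $\Pi^\mu$-modules supplied by Theorem \ref{inttheory} bijectively to $n$ finite-dimensional simple $\mathcal{O}^\mu$-modules $S_i$. Since any Morita equivalence induces isomorphisms on every $\Ext$ group, the alternating sum defining $S_i \bullet S_j$ agrees with the corresponding sum computed on the $\Pi^\mu$-side in Theorem \ref{inttheory}, so the intersection matrix is precisely $-C$.

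The main obstacle is step two: turning the abstract Morita equivalence between $\mathcal{O}^\mu$ and $\Pi^\lambda$ into an explicit realisation of $\mathcal{O}^\mu$ as $\End_{\mathcal{O}^\lambda}(N)$ for a module $N$ that genuinely generates $\mod \mathcal{O}^\lambda$. The key device is the classical equivalence $\add M \simeq \proj \End_R(M)$ applied to $M = \Pi^\lambda e_0$ over $R = \mathcal{O}^\lambda$, which transfers the Morita progenerator on the $\Pi^\lambda$-side back down to $\mathcal{O}^\lambda$; Krull-Schmidt then guarantees that $\mathcal{O}^\lambda$ itself persists as a direct summand of this transferred module, ensuring that $N$ is in fact a generator and not merely a projective module.
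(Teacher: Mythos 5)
Your argument follows the same route as the paper: apply Lemma \ref{reflecttosmooth} to obtain a smooth $\mathcal{O}^\mu$, chain the Morita equivalences $\mathcal{O}^\mu \sim \Pi^\mu \sim \Pi^\lambda$, and transfer the conclusions of Theorem \ref{inttheory}. You go further than the paper's terse ``it follows'' by explicitly realising $\mathcal{O}^\mu$ as $\End_{\mathcal{O}^\lambda}(N)$ for a generator $N$ via the $\add(\Pi^\lambda e_0)\simeq\proj\Pi^\lambda$ equivalence and by deriving the $\Pi^\mu\sim\mathcal{O}^\mu$ Morita equivalence from Lemma \ref{isproj} rather than citing [CBH, Cor.\ 9.6]; the only minor overstatement is ``$\mathcal{O}^\lambda$ is a summand of $N$'' where all one gets (and needs) is that it is a summand of some $N^{\oplus k}$.
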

\begin{proof}
Lemma \ref{reflecttosmooth} tells us that there exists a sequence of dual reflections $\rho$ such that $\mathcal{O}^\mu$ is nonsingular, where $\mu = \rho(\lambda)$. Since $\Pi^\lambda$ is a resolution of $\mathcal{O}^\lambda$ and there are Morita equivalences between $\Pi^\lambda$, $\Pi^\mu$, and $\mathcal{O}^\mu$ (by \cite[Corollary 5.2, Corollary 9.6]{cbh}), it follows that $\mathcal{O}^\mu$ is a NQCR of $\mathcal{O}^\lambda$. Finally, these Morita equivalences combined with Theorem \ref{inttheory} tells us that $\mathcal{O}^\mu$ has precisely $n$ finite-dimensional simple modules $S_i$ up to isomorphism, and since Morita equivalences preserve dimensions of Hom and Ext groups, the claimed intersection multiplicities follow from Theorem \ref{inttheory} as well.
\end{proof}


\appendix

\section{Uniqueness of the translation functor on objects of $\proj \Pi(Q)$ when $Q$ is Dynkin}

In this appendix, we show that if $Q$ is Dynkin and $\proj \Pi(Q)$ has the structure of a (not necessarily algebraic) triangulated category, then the translation functor $\Sigma$ is uniquely determined on objects of $\proj \Pi(Q)$. In particular, this tells us how the translation functor acts on objects in Theorem \ref{triequiv}. Although this is known from that result, we believe that an elementary and relatively short proof of this fact may be of independent interest. \\
\indent For the remainder of this section, write  $P_1, \dots, P_n$ for the $n$ indecomposable projective right $\Pi(Q)$-modules corresponding to the vertices of $Q$. Write $W_0, \dots, W_n$ for the $n+1$ irreducible representations of the finite group $G$ corresponding to $Q$. Since $\proj \Pi(Q)$ is Krull-Schmidt, it is easy to see that $\Sigma P_i = P_j$ for some $j$, so write $\sigma$ for the permutation of the vertices of $Q$ satisfying $\Sigma P_i = P_{\sigma(i)}$. The map $W_i \to W_i^*$ sending a representation to its dual is an involution of $\{W_1, \dots, W_n\}$ (where we intentionally omit $W_0$), and we can view this map as an automorphism $\nu$ of $Q$. Throughout this section, all Hom spaces are over $\Pi(Q)$, and we omit this subscript. The aim of this section is to prove the following result, which we achieve by analysing cases.

\begin{thm} \label{graphautos}
Consider the category $\proj \Pi(Q)$ with some triangulated structure with translation functor $\Sigma$. Then $\sigma = \nu$ as automorphisms of $Q$.
\end{thm}

\begin{rem}
Explicitly, $\nu$ is the identity automorphism of $Q$ when $Q$ is $\mathbb{A}_1, \mathbb{D}_{n}$ ($n$ even), $\mathbb{E}_7$, or $\mathbb{E}_8$, and it is the unique graph automorphism of order $2$ when $Q$ is $\mathbb{A}_n$ ($n \geqslant 2$), $\mathbb{D}_n$ ($n$ odd), or $\mathbb{E}_6$.
\end{rem}

\indent Adapting the proof of Lemma \ref{graphautolem}, $\sigma$ is necessarily a graph automorphism of $Q$. Therefore, since the automorphism group of an $\mathbb{A}_1$, $\mathbb{E}_7$, or $\mathbb{E}_8$ graph is trivial, it immediately follows that $\sigma$ is the identity in these cases. For the remaining cases, we argue that $\Sigma$ is uniquely determined using Lemma \ref{graphautolem} and by considering the dimensions of the Hom spaces between the $P_i$. We record the dimensions of these Hom spaces in the following lemma; the $\mathbb{E}_7$ and $\mathbb{E}_8$ cases are unnecessary and hence omitted, but they can be established in the same way.

\begin{lem} \label{homspacedims}
Let $Q$ be a Dynkin quiver with $n$ vertices and let $P_1, \dots, P_n$ be the $n$ indecomposable projective right $\Pi(Q)$-modules corresponding to the vertices of $Q$. Let $H(Q)$ be the matrix with \begin{align*}
H(Q)_{ij} = \dim_\Bbbk \Hom(P_j,P_i) = \dim_\Bbbk e_i \Pi(Q) e_j.
\end{align*}
\begin{enumerate}[{\normalfont (1)},leftmargin=*,topsep=0pt,itemsep=0pt]
\item If $Q = \mathbb{A}_n$ then
\begin{align*}
H(\mathbb{A}_n) = \begin{pmatrix}
1 & 1 & 1 & \cdots & 1 & 1 & 1 \\ 
1 & 2 & 2 & \cdots & 2 & 2 & 1 \\ 
1 & 2 & 3 & \cdots & 3 & 2 & 1 \\ 
\vdots & \vdots & \vdots & \ddots & \vdots & \vdots & \vdots \\
1 & 2 & 3 & \cdots & 3 & 2 & 1 \\ 
1 & 2 & 2 & \cdots & 2 & 2 & 1 \\ 
1 & 1 & 1 & \cdots & 1 & 1 & 1 
\end{pmatrix}.
\end{align*}
\item If $Q = \mathbb{D}_n$ then
\begin{align*}
H(\mathbb{D}_n) = \begin{pmatrix}
2 & 2 & 2 & \cdots & 2 & 1 & 1 \\ 
2 & 4 & 4 & \cdots & 4 & 2 & 2 \\ 
2 & 4 & 6 & \cdots & 6 & 3 & 3 \\ 
\vdots & \vdots & \vdots & \ddots & \vdots & \vdots & \vdots \\
2 & 4 & 6 & \cdots & 2(n-2) & n-2 & n-2 \\ 
1 & 2 & 3 & \cdots & n-2 & \ceil*{\tfrac{n-1}{2}} & \floor*{\tfrac{n-1}{2}} \\[2pt]
1 & 2 & 3 & \cdots & n-2 & \floor*{\tfrac{n-1}{2}} &  \ceil*{\tfrac{n-1}{2}} 
\end{pmatrix}.
\end{align*}
\item If $Q = \mathbb{E}_6$ then
\begin{gather*}
H(\mathbb{E}_6) = 
\begin{pmatrix}
4 & 2 & 4 & 6 & 4 & 2 \\
2 & 2 & 3 & 4 & 3 & 2 \\
4 & 3 & 6 & 8 & 6 & 3 \\
6 & 4 & 8 & 12 & 8 & 4 \\
4 & 3 & 6 & 8 & 6 & 3 \\
2 & 2 & 3 & 4 & 3 & 2
\end{pmatrix}.
\end{gather*}
\end{enumerate}
\end{lem}
\begin{proof}
These can be calculated using \cite[\S 4]{erdmannA}, \cite[3.4]{erdmannD}, and \cite[Theorem 2.3.b]{malkin}.
\end{proof}

We now begin our case-by-case argument. In each case, the technique is the same: seeking a contradiction, we show that if $\sigma$ is a graph automorphism of $Q$ different from the one given in Proposition \ref{graphautos} then we arrive at a contradiction. We begin with the type $\mathbb{A}$ case:

\begin{prop} \label{AnProp}
Let $\sigma$ be the graph automorphism of $\mathbb{A}_n$ induced by the translation functor $\Sigma$ on $\proj \Pi(\mathbb{A}_n)$. Then $\sigma$ is the identity when $n = 1$, and it is the unique order 2 graph automorphism when $n \geqslant 2$.
\end{prop}
\begin{proof}
We have already established the $n=1$ case, so suppose $n \geqslant 2$. By Lemma \ref{graphautolem}, $\sigma$ is either the identity or has order $2$ so, seeking a contradiction, suppose that $\sigma$ is the identity; that is $\Sigma P_i = P_i$ for all $i$. Consider the nonzero morphism $P_1 \to P_n$ given by left multiplication by $\overline{\alpha}_{n-1} \overline{\alpha}_{n-2} \dots \overline{\alpha}_{1}$, which gives rise to a distinguished triangle
\begin{align*}
P_1 \to P_n \to M \to P_1
\end{align*}
for some $M \in \proj \Pi(\mathbb{A}_n)$. Applying $\Hom(-,P_n)$, this gives rise to an exact sequence
\begin{displaymath}
\begin{tikzcd}[column sep=small,>=stealth,matrix scale=1, transform shape, nodes={scale=1}] 
\Hom(P_n,P_n)  \arrow{r}[shift={(0pt,4pt)}]{\cdot \overline{\alpha}_{n-1} \overline{\alpha}_{n-2} \dots \overline{\alpha}_{1}} \arrow[equal]{d} & \Hom(P_1,P_n) \arrow{r}[shift={(0pt,4pt)}]{\beta} \arrow[equal]{d} & \Hom(M,P_n)  \arrow{r}[shift={(0pt,4pt)}]{\gamma} & \Hom(P_n,P_n) \arrow{r}[shift={(0pt,4pt)}]{\cdot \overline{\alpha}_{n-1} \overline{\alpha}_{n-2} \dots \overline{\alpha}_{1}} \arrow[equal]{d} & \Hom(P_1,P_n)  \arrow[equal]{d} \\
\Bbbk e_n & \Bbbk \overline{\alpha}_{n-1} \overline{\alpha}_{n-2} \dots \overline{\alpha}_{1} & {} & \Bbbk e_n & \Bbbk \overline{\alpha}_{n-1} \overline{\alpha}_{n-2} \dots \overline{\alpha}_{1}
\end{tikzcd}
\end{displaymath}
\noindent where we use Lemma \ref{homspacedims} to write down bases for each of the Hom spaces. Now the left hand map is surjective, so exactness implies that $\beta$ is the zero map, which forces $\gamma$ to be injective. Moreover, the right hand map is injective, so that $\gamma$ is the zero map. In particular, Lemma \ref{homspacedims} implies that we have $\Hom(M,P_n) = 0$ and so $M = 0$, but this tells us that $P_1 \cong P_n$ which is absurd. Therefore $\sigma$ must be the unique order 2 graph automorphism of $\mathbb{A}_n$.
\end{proof}

\indent We now turn our attention to the type $\mathbb{E}$ cases:

\begin{prop} \label{EnProp}
Let $\sigma$ be the graph automorphism of $\mathbb{E}_n$ induced by the translation functor $\Sigma$ on $\proj \Pi(\mathbb{E}_n)$, where $n \in \{ 6,7,8 \}$. Then $\sigma$ is the identity when $n \neq 6$, and it is the unique order 2 graph automorphism when $n = 6$.
\end{prop}
\begin{proof}
Again, the $\mathbb{E}_7$ and $\mathbb{E}_8$ cases are immediate from Lemma \ref{graphautolem}, so consider $\mathbb{E}_6$. By Lemma \ref{graphautolem}, $\sigma$ is either the identity or has order $2$ so, seeking a contradiction, suppose that $\sigma$ is the identity. Consider the nonzero morphism $P_2 \to P_6$ given by left multiplication by $\alpha_5 \overline{\alpha}_{4} \alpha_3 \overline{\alpha}_{2}$, which gives rise to a distinguished triangle
\begin{align*}
P_2 \to P_6 \to M \to P_2
\end{align*}
for some $M \in \proj \Pi(\mathbb{E}_6)$. Applying $\Hom(-,P_6)$, this gives rise to an exact sequence
\begin{displaymath}
\begin{tikzcd}[column sep=small,>=stealth,matrix scale=1, transform shape, nodes={scale=1}] 
\Hom(P_6,P_6)  \arrow{r}[shift={(0pt,4pt)}]{\cdot \alpha_5 \overline{\alpha}_{4} \alpha_3 \overline{\alpha}_{2}} \arrow[equal]{d} & \Hom(P_2,P_6) \arrow{r}[shift={(0pt,4pt)}]{\beta} \arrow[equal]{d} & \Hom(M,P_6) \arrow{r}[shift={(0pt,4pt)}]{\gamma} & \Hom(P_6,P_6) \arrow{r}[shift={(0pt,4pt)}]{\cdot \alpha_5 \overline{\alpha}_{4} \alpha_3 \overline{\alpha}_{2} } \arrow[equal]{d} & \Hom(P_2,P_6)  \arrow[equal]{d} \\
\substack{ \Bbbk e_6 \\ \oplus \\ \Bbbk \alpha_5 \overline{\alpha}_4 \alpha_1 \overline{\alpha}_1 \alpha_4 \overline{\alpha}_5}
& \hspace{-10pt} \substack{ \Bbbk \alpha_5 \overline{\alpha}_{4} \alpha_3 \overline{\alpha}_{2} \\ \oplus \\ \Bbbk \alpha_5 \overline{\alpha}_4 \alpha_1 \overline{\alpha}_1 \alpha_4 \overline{\alpha}_5 \alpha_5 \overline{\alpha}_{4} \alpha_3 \overline{\alpha}_{2}} \hspace{-10pt}
& {}
& \hspace{-10pt} \substack{ \Bbbk e_6 \\ \oplus \\ \Bbbk \alpha_5 \overline{\alpha}_4 \alpha_1 \overline{\alpha}_1 \alpha_4 \overline{\alpha}_5} \hspace{-10pt} 
& \hspace{-10pt} \substack{ \Bbbk \alpha_5 \overline{\alpha}_{4} \alpha_3 \overline{\alpha}_{2} \\ \oplus \\ \Bbbk \alpha_5 \overline{\alpha}_4 \alpha_1 \overline{\alpha}_1 \alpha_4 \overline{\alpha}_5 \alpha_5 \overline{\alpha}_{4} \alpha_3 \overline{\alpha}_{2}}
\end{tikzcd}
\end{displaymath}
\noindent where again we use Lemma \ref{homspacedims} to write down bases for each of the Hom spaces. We see that the left hand map is surjective and so $\beta$ is the zero map, and exactness implies that $\gamma$ is injective. Since the right hand map is injective it follows that $\gamma$ is the zero map. Therefore by  Lemma \ref{homspacedims} $\Hom(M,P_6) = 0$ and so $M = 0$, but this tells us that $P_2 \cong P_6$ which is absurd. Therefore $\sigma$ must be the unique order 2 graph automorphism of $\mathbb{E}_6$.
\end{proof}

\indent Finally we consider the type $\mathbb{D}$ cases. Since we claim that $\Sigma$ behaves differently depending on whether $n$ is odd or even, we have to consider these two cases separately; additionally, we consider the $n=4$ case separately since $\text{Aut}(\mathbb{D}_4) \cong S_3$ instead of it having order 2.

\begin{prop}  \label{D4Prop}
Let $\sigma$ be the graph automorphism of $\mathbb{D}_4$ induced by the translation functor $\Sigma$ on $\proj \Pi(\mathbb{D}_4)$. Then $\sigma$ is the identity.
\end{prop}
\begin{proof}
By Lemma \ref{graphautolem}, $\sigma$ is either the identity, a two-cycle which swaps a pair of vertices $i \neq 2 \neq j$, or it cycles the vertices $1,3,4$. We rule out the latter two possibilities. \\
\indent First suppose that $\sigma$ is a two-cycle: without loss of generality, $\sigma = (3 \hspace{2pt} 4)$. Consider the nonzero morphism $P_3 \to P_4$ given by left multiplication by $\overline{\alpha}_4 \alpha_3$, which gives rise to a distinguished triangle
\begin{align*}
P_3 \to P_4 \to M \to P_4
\end{align*}
for some $M \in \proj \Pi(\mathbb{D}_4)$. Applying $\Hom(-, P_3)$, this gives rise to an exact sequence
\begin{displaymath}
\begin{tikzcd}[column sep=small,>=stealth,matrix scale=1, transform shape, nodes={scale=1}] 
\Hom(P_3,P_3)  \arrow{r}[shift={(0pt,4pt)}]{\cdot \overline{\alpha}_3 \alpha_4} \arrow[equal]{d} & \Hom(P_4,P_3) \arrow{r}[shift={(0pt,4pt)}]{\beta} \arrow[equal]{d} & \Hom(M,P_3) \arrow{r}[shift={(0pt,4pt)}]{\gamma} & \Hom(P_4,P_3) \arrow{r}[shift={(0pt,4pt)}]{\cdot \overline{\alpha}_4 \alpha_3 } \arrow[equal]{d} & \Hom(P_3,P_3)  \arrow[equal]{d} \\
\Bbbk e_3 \oplus \Bbbk \overline{\alpha}_3 \alpha_4 \overline{\alpha}_4 \alpha_3 & \Bbbk \overline{\alpha}_3 \alpha_4 & {} & \Bbbk \overline{\alpha}_3 \alpha_4 & \Bbbk e_3 \oplus \Bbbk \overline{\alpha}_3 \alpha_4 \overline{\alpha}_4 \alpha_3
\end{tikzcd}
\end{displaymath}
\noindent Clearly the left hand map surjects, so exactness forces $\beta$ to be the zero map, which in turn implies that $\gamma$ is injective. The right hand map is injective, and exactness forces $\gamma$ to be the zero map. In particular we have $\Hom(M,P_3) = 0$ and so $M = 0$, but this tells us that $P_3 \cong P_4$ which is absurd. Therefore $\sigma$ is not a two-cycle. \\
\indent Now suppose that $\sigma$ is a three-cycle: without loss of generality,  $\sigma = (1 \hspace{2pt} 3 \hspace{2pt} 4)$. We now consider the triangle obtained from the morphism $\alpha_1 \alpha_3 \cdot : P_3 \to P_1$,
\begin{align*}
P_3 \to P_1 \to M \to P_4
\end{align*}
and seek to obtain contradiction. Applying the functor $\Hom(-,P_3)$, we get exactness of the following sequence:
\begin{displaymath}
\begin{tikzcd}[column sep=small,>=stealth,matrix scale=1, transform shape, nodes={scale=1}] 
\Hom(P_3,P_3)  \arrow{r}[shift={(0pt,4pt)}]{\cdot \overline{\alpha}_3 \alpha_4} \arrow[equal]{d} & \Hom(P_4,P_3) \arrow{r}[shift={(0pt,4pt)}]{\beta} \arrow[equal]{d} & \Hom(M,P_3) \arrow{r}[shift={(0pt,4pt)}]{\gamma} & \Hom(P_1,P_3) \arrow{r}[shift={(0pt,4pt)}]{\cdot \alpha_1 \alpha_3 }  \arrow[equal]{d} & \Hom(P_3,P_3)  \arrow[equal]{d} \\
\Bbbk e_3 \oplus \Bbbk \overline{\alpha}_3 \overline{\alpha}_1 \alpha_1 \alpha_3 & \Bbbk \overline{\alpha}_3 \alpha_4 & {} & \Bbbk \overline{\alpha}_3 \overline{\alpha}_1 & \Bbbk e_3 \oplus \Bbbk \overline{\alpha}_3 \overline{\alpha}_1 \alpha_1 \alpha_3 
\end{tikzcd}
\end{displaymath}
\noindent Again the left hand map is surjective, forcing $\beta$ to be the zero map and hence $\gamma$ to be injective. Moreover, the right hand map is injective, and so $\gamma$ must be the zero map. In particular we have $\Hom(M,P_3) = 0$ and so $M = 0$, but this tells us that $P_1 \cong P_3$ which is absurd. Therefore $\sigma$ is not a three-cycle, and hence must be the identity.
\end{proof}

\begin{prop}  \label{DnOddProp}
Let $n \geqslant 5$ be odd and let $\sigma$ be the graph automorphism of $\mathbb{D}_n$ induced by the translation functor $\Sigma$ on $\proj \Pi(\mathbb{D}_n)$. Then $\sigma$ is the unique graph automorphism of order 2.
\end{prop}
\begin{proof}
By Lemma \ref{graphautolem}, $\sigma$ is either the identity or $(n{-}1 \hspace{3pt} n)$ so, seeking a contradiction, assume it is the former; that is, $\Sigma P_i = P_i$ for all $i$. Consider the morphism $P_n \to P_1$ given by left multiplication by $\alpha_1 \alpha_2 \alpha_3 \dots \alpha_{n-3} \alpha_n$. This gives rise to a distinguished triangle
\begin{align*}
P_n \to P_1 \to M \to P_n
\end{align*}
for some $M \in \proj \Pi(\mathbb{D}_n)$. Applying $\Hom(-, P_1)$ gives rise to the following exact sequence,
\begin{displaymath}
\begin{tikzcd}[column sep=small,>=stealth,matrix scale=1, transform shape, nodes={scale=1}] 
\Hom(P_1,P_1)  \arrow{r}[shift={(0pt,4pt)}]{\cdot \alpha_1 \alpha_2  \dots  \alpha_{n-3} \alpha_n} \arrow[equal]{d} & \Hom(P_n,P_1) \arrow{r}[shift={(0pt,4pt)}]{\beta} \arrow[equal]{d} & \Hom(M,P_1)  \arrow{r}[shift={(0pt,4pt)}]{\gamma} & \Hom(P_1,P_1) \arrow{r}[shift={(0pt,4pt)}]{\cdot \alpha_1 \alpha_2  \dots  \alpha_{n-3} \alpha_n} \arrow[equal]{d} & \Hom(P_n,P_1)  \arrow[equal]{d} \\
\hspace{10pt} \Bbbk e_1 \oplus \Bbbk p \hspace{10pt} & \Bbbk \alpha_1 \alpha_2  \dots  \alpha_{n-3} \alpha_n  & {} & \hspace{10pt} \Bbbk e_1 \oplus \Bbbk p \hspace{10pt}  & \Bbbk \alpha_1 \alpha_2  \dots  \alpha_{n-3} \alpha_n
\end{tikzcd}
\end{displaymath}
\noindent where here $p$ is some path. The left hand map is surjective, so $\beta$ is the zero map and therefore $\gamma$ is injective. The kernel of the right hand map is one-dimensional, and exactness tells us that $\gamma$ has rank 1. In particular we have $\dim \Hom(M,P_1) = 1$ and so $M$ is either $P_{n-1}$ or $P_n$ by Lemma \ref{homspacedims}. If we instead apply $\Hom(-,P_n)$, the resulting exact sequence is
\begin{displaymath}
\begin{tikzcd}[column sep=small,>=stealth,matrix scale=1, transform shape, nodes={scale=1}] 
\Hom(P_1,P_n)  \arrow{r}[shift={(0pt,4pt)}]{\cdot \alpha_1 \alpha_2  \dots  \alpha_{n-3} \alpha_n} \arrow[equal]{d} & \Hom(P_n,P_n) \arrow{r}[shift={(0pt,4pt)}]{\beta} \arrow[equal]{d} & \Hom(M,P_n) \arrow{r}[shift={(0pt,4pt)}]{\gamma} & \Hom(P_1,P_n) \arrow{r}[shift={(0pt,4pt)}]{\cdot \alpha_1 \alpha_2  \dots  \alpha_{n-3} \alpha_n} \arrow[equal]{d} & \Hom(P_n,P_n)  \arrow[equal]{d} \\
\Bbbk \overline{\alpha}_n \overline{\alpha}_{n-3} \dots \overline{\alpha}_2 \overline{\alpha}_1 & \Bbbk^{\tfrac{n-1}{2}} & {} & \Bbbk \overline{\alpha}_n \overline{\alpha}_{n-3} \dots \overline{\alpha}_2 \overline{\alpha}_1 & \Bbbk^{\tfrac{n-1}{2}}
\end{tikzcd}
\end{displaymath}
\noindent Since $n$ is odd, the shortest path from vertex $n$ to vertex 1 and back to vertex $n$ is zero in $\Pi(\mathbb{D}_n)$, so the first and the last maps both have rank zero. Therefore $\beta$ has full rank, forcing the kernel of $\gamma$ to have dimension $\tfrac{n-1}{2}$. Exactness also forces $\gamma$ to have rank 1, and therefore $\dim \Hom(M,P_n) = \tfrac{n-1}{2} + 1$. Now we have already seen that $M$ is either $P_{n-1}$ or $P_n$, but $\dim \Hom(P_{n-1},P_n) = \frac{n-1}{2} = \dim \Hom(P_n,P_n)$, so we have a contradiction. Therefore $\sigma$ is the unique graph automorphism of order 2.
\end{proof}

\begin{prop} \label{DnEvenProp}
Let $n \geqslant 6$ be even and let $\sigma$ be the graph automorphism of $\mathbb{D}_n$ induced by the translation functor $\Sigma$ on $\proj \Pi(\mathbb{D}_n)$. Then $\sigma$ is the identity.
\end{prop}
\begin{proof}
By Lemma \ref{graphautolem}, $\sigma$ is either the identity or $(n{-}1 \hspace{3pt} n)$ so, seeking a contradiction, assume it is the latter. Consider the morphism $P_n \to P_1$ given by left multiplication by $\alpha_1 \alpha_2 \dots \alpha_{n-3} \alpha_{n}$, and extend this to a distinguished triangle
\begin{align*}
P_n \to P_1 \to M \to P_{n-1}
\end{align*}
for some $M \in \proj \Pi(\mathbb{D}_n)$. If we apply $\Hom(-,P_1)$ we get the following exact sequence
\begin{displaymath}
\begin{tikzcd}[column sep=small,>=stealth,matrix scale=1, transform shape, nodes={scale=1}] 
\Hom(P_1,P_1)  \arrow{r}[shift={(0pt,4pt)}]{\cdot \alpha_1 \alpha_2 \dots \alpha_{n-3} \alpha_{n-1}} \arrow[equal]{d} & \Hom(P_{n-1},P_1)  \arrow{r}[shift={(0pt,4pt)}]{\beta} \arrow[equal]{d} & \Hom(M,P_1)  \arrow{r}[shift={(0pt,4pt)}]{\gamma} & \Hom(P_1,P_1) \arrow{r}[shift={(0pt,4pt)}]{\cdot \alpha_1 \alpha_2 \dots \alpha_{n-3} \alpha_{n}} \arrow[equal]{d} & \Hom(P_n,P_1)  \arrow[equal]{d} \\
\hspace{10pt} \Bbbk e_1 \oplus \Bbbk p & \Bbbk \alpha_1 \alpha_2 \dots \alpha_{n-3} \alpha_{n-1} & {} & \hspace{10pt} \Bbbk e_1 \oplus \Bbbk p \hspace{10pt} & \Bbbk \alpha_1 \alpha_2 \dots \alpha_{n-3} \alpha_{n}
\end{tikzcd}
\end{displaymath}
\noindent where $p$ is some path. The left hand map surjects, so $\beta = 0$ and therefore $\gamma$ injects. The right hand map has a one-dimensional kernel, so $\gamma$ has rank 1. It follows that $\dim \Hom(M,P_1) = 1$, which implies that $M$ is either $P_{n-1}$ or $P_n$. If we instead apply $\Hom(-,P_n)$ we get
\begin{displaymath}
\begin{tikzcd}[column sep=small,>=stealth,matrix scale=1, transform shape, nodes={scale=1}] 
\Hom(P_1,P_n)  \arrow{r}[shift={(0pt,4pt)}]{\cdot \alpha_1 \alpha_2  \dots \alpha_{n-3} \alpha_{n-1}} \arrow[equal]{d} & \Hom(P_{n-1},P_n) \arrow{r}[shift={(0pt,4pt)}]{\theta} \arrow[equal]{d} & \Hom(M,P_n) \arrow{r}[shift={(0pt,4pt)}]{\eta} & \Hom(P_1,P_n) \arrow{r}[shift={(0pt,4pt)}]{\cdot \alpha_1 \alpha_2  \dots \alpha_{n-3} \alpha_{n}} \arrow[equal]{d} & \Hom(P_n,P_n)  \arrow[equal]{d} \\
\Bbbk \overline{\alpha}_{n} \overline{\alpha}_{n-3} \dots \overline{\alpha}_{2} \overline{\alpha}_{1} & \Bbbk^{\tfrac{n}{2}-1} & {} & \Bbbk \overline{\alpha}_{n} \overline{\alpha}_{n-3} \dots \overline{\alpha}_{2} \overline{\alpha}_{1}  & \Bbbk^{\tfrac{n}{2}}
\end{tikzcd}
\end{displaymath}
\noindent Since $n$ is even, the shortest path from vertex $n$ to vertex $1$ and then to vertex $n{-}1$ is zero in $\Pi(\mathbb{D}_n)$, while the shortest path from vertex $n$ to vertex $1$ and back to vertex $n$ is nonzero. It follows that the left hand map is the zero map, while the right hand map has rank 1. Therefore $\theta$ has full rank which implies that the kernel of $\eta$ has dimension $\tfrac{n}{2}-1$. Moreover, the right hand map is injective, so that $\eta$ has rank 0 and so $\dim \Hom(M,P_n) = \frac{n}{2}-1$. Combining this with our earlier restriction on $M$, this forces $M = P_{n-1}$, and our distinguished triangle is therefore 
\begin{align*}
P_n \to P_1 \to P_{n-1} \to P_{n-1}.
\end{align*}
Since $\Hom(P_1,P_{n-1})$ is spanned by $\overline{\alpha}_{n-1} \overline{\alpha}_{n-3} \dots \overline{\alpha}_2 \overline{\alpha}_1$ and $\gamma$ is not the zero map, we can assume that the map $P_1 \to P_{n-1}$ in this triangle is given by left multiplication by (a scalar multiple of) $\overline{\alpha}_{n-1} \overline{\alpha}_{n-3} \dots \overline{\alpha}_2 \overline{\alpha}_1$. Moreover,  $\Hom(P_{n-1},P_{n-1}) = \sspan \{ e_{n-1}, p_2, \dots , p_{n/2} \}$ where the $p_i$ are paths of length $\geqslant 4$. Since $\theta$ is not the zero map and the composition $P_1 \to P_{n-1} \to P_{n-1}$ must be zero, the map $P_{n-1} \to P_{n-1}$ in this triangle lies in $\sspan \{ p_2, \dots , p_{n/2} \}$. But then $\theta : \Hom(P_{n-1}, P_n) \to \Hom(P_{n-1}, P_n)$ maps the longest path in $\Hom(P_{n-1}, P_n)$ to zero, contradicting the fact that $\theta$ has trivial kernel. It follows that $\sigma$ is not a two-cycle.
\end{proof}


\bibliographystyle{amsalpha}
\bibliography{thesisbib}


\end{document}